\documentclass[12pt, reqno]{amsart}
\usepackage{amsmath, amssymb, amsthm, graphicx, hyperref}
\usepackage[margin=1in]{geometry}

\theoremstyle{definition}
\newtheorem{theorem}{Theorem}[section]
\newtheorem{lemma}[theorem]{Lemma}
\newtheorem{definition}[theorem]{Definition}
\newtheorem{remark}[theorem]{Remark}
\numberwithin{equation}{section}

\newtheorem{proposition}[theorem]{Proposition}

\newtheorem{example}[theorem]{Example}
\newtheorem{corollary}[theorem]{Corollary} 

\usepackage{subcaption}
\usepackage{amsfonts}
\usepackage{ytableau}

\newcommand{\Comp}{\mathrm{Comp}}
\newcommand{\Set}{\mathrm{Set}}

\newcommand{\Row}{\mathrm{Row}}
\newcommand{\Des}{\mathrm{Des}}
\newcommand{\content}{\mathrm{content}}
\newcommand{\Span}{\mathrm{span}}

\newcommand{\flip}{\mathrm{flip}}
\newcommand{\Id}{\mathrm{Id}}

\newcommand{\SIT}{\mathrm{SIT}}
\newcommand{\SRIT}{\mathrm{SRIT}}
\newcommand{\SSIT}{\mathrm{SSIT}}
\newcommand{\RSSIT}{\mathrm{RSSIT}}
\newcommand{\SSRIT}{\mathrm{SSRIT}}
\newcommand{\RSSRIT}{\mathrm{RSSRIT}}

\newcommand{\Sym}{\ensuremath{\operatorname{Sym}}}
\newcommand{\QSym}{\ensuremath{\operatorname{QSym}}}

\newcommand{\BdI}{\rotatebox[origin=c]{180}{$\mathfrak{S}$}}
\newcommand{\dI}{\mathfrak{S}}

\newcommand{\svw}[1]{\textcolor{black}{#1}}
\newcommand{\svwrev}[1]{\textcolor{black}{#1}}
\newcommand{\ME}[1]{\textcolor{black}{#1}}

\begin{document}
\title{Equality of Dual Immaculate Functions Under Automorphisms}
\author{Maria Esipova}
\address{Department of Mathematics and Statistics, McGill University, Montréal, Canada} 
\email{maria.esipova@mail.mcgill.ca}
\author{Stephanie van Willigenburg}
\address{Department of Mathematics, University of British Columbia,  Vancouver, Canada}
\email{steph@math.ubc}

\thanks{All authors were supported  in part by the Natural Sciences and Engineering Research Council of Canada.}
\subjclass[2020]{05A15, 05E05, 16T30}
\keywords{automorphisms, dual immaculate function, quasisymmetric function, tableaux combinatorics}
\date{\today}

\begin{abstract}
The dual immaculate functions are an example of a Schur-like basis in the algebra of quasisymmetric \svw{functions.} We classify when the image of a dual immaculate function under one of the involutions $\rho,\psi,\omega$ is equal to a dual immaculate function. As well, the leading term of the transition matrix is identified, and sufficient and necessary conditions for the existence of immaculate tableaux \svw{are determined.} As a consequence, new maps and canonical tableaux associated with compositions are discovered. 
\end{abstract}

\maketitle
\tableofcontents

\section{Introduction} \label{ch:intro}
The algebra of symmetric functions ($\Sym$) is ubiquitous in the area of algebraic combinatorics and provides connections to areas such as representation theory, algebra, statistical mechanics, discrete geometry, algebraic geometry, and of course \svw{combinatorics.} Many of these connections are through the Schur basis of \svw{$\Sym$.} 

The algebra of quasisymmetric functions ($\QSym$) generalizes \ME{the algebra of} symmetric functions, containing $\Sym$ as a subalgebra \cite{gessel}. The first Schur-like basis of $\QSym$ was discovered in 2011 \cite{haglundQuasisymmetricSchurFunctions2011}, followed by the dual immaculate basis in 2014 \cite{bergLiftSchurHallLittlewood2014}, and the extended basis in 2018 \cite{assafKohnertPolynomials2018}.
All Schur-like bases can be defined as generating functions of tableaux of composition shapes, generalizing the Young tableaux which define the Schur functions.

In $\Sym$, the Schur basis is invariant under the classical involution $\omega$. However, the dual immaculate basis is not invariant under the maps $\rho,\psi,\omega$ on $\QSym$ which collectively generalize $\omega$ on $\Sym$. On the contrary, the dual immaculate basis, and the images of the dual immaculate basis under $\rho,\psi,\omega$ form a system of distinct Schur-like bases, \svw{each of which has generated their own avenue of research, as we will see.} This is an important distinction between the Schur basis, and its quasisymmetric generalizations.  

The system formed by a basis and its image under $\rho$ is also referred to as the Young and reverse dichotomy. For results on the Young and reverse dichotomy for dual immaculate functions see \ME{\cite{masonLiftingDualImmaculate2021}}  for quasisymmetric Schur functions \cite{luotoIntroductionQuasisymmetricSchur2013a}, for extended functions \cite{daughertyExtendedSchurFunctions2024}, and for polynomial bases \ME{\cite{masonYoungReverseDichotomy2021}.} For Schur functions, the Young and reverse dichotomy is observed by the equivalent definitions of Schur functions as generating functions of Young tableaux as well as reverse tableaux \cite{luotoIntroductionQuasisymmetricSchur2013a}.

For results on the image under the $\psi$ map for dual immaculate functions see \cite{nieseRowstrictDualImmaculate2023}, and for extended functions \cite{daughertyExtendedSchurFunctions2024}. Finally, for results on the image under the $\omega$ map for dual immaculate functions see \cite{daughertySchurlikeBasesTheir2024}, for extended functions \cite{daughertyExtendedSchurFunctions2024}, and for quasisymmetric Schur functions \cite{masonRowstrictQuasisymmetricSchur2011}.

A central area of study in symmetric and quasisymmetric function theory is the transition matrix between different bases. In particular, we wish to understand the expansion of a variant of a dual immaculate function into the dual immaculate basis. 
In this work, we classify when this expansion is `simplest', i.e. consists of exactly one term. In other words, when a variant of a dual immaculate function is equal to a dual immaculate function. A similar result is obtained in \svw{\cite[Theorem 13]{masonYoungReverseDichotomy2021}} where Mason and Searles classify when a quasisymmetric Schur polynomial is equal to a Young quasisymmetric Schur polynomial. As a result of our investigation, we naturally identify three interesting maps on compositions, $\alpha \mapsto \beta^f_\alpha$, as well as four canonical tableaux associated to a composition, $T^{f,M}_\alpha$, which are \svw{combinatorially} interesting in their own right.

In Section \ref{ch:comp_bg} we introduce the relevant background on compositions. In Section \ref{ch:sym_bg} we give the necessary background for the algebras $\Sym$ and $\QSym$, as well as the Schur basis. Section \ref{ch:dI_bg} gives an overview of the dual immaculate basis and its variants. In Section \ref{ch:comps_and_complement}, we prove some results for complements of compositions. In Section \ref{ch:leading_terms}, we identify the leading terms in the dual immaculate \svw{function} \svw{expansions} of the images of the dual immaculate functions under \svw{$\rho,\psi,\omega$.}
In Section \ref{ch:dI_in_F_classification} we classify when a dual immaculate function is equal to a fundamental \svw{quasisymmetric} function. Section \ref{ch:nec_suff_conds} contains necessary and sufficient conditions for the existence of immaculate tableaux with specific shape and descent composition. Finally, Section \ref{ch:dI_images} contains the main result, classifying when a variant of a dual immaculate function is equal to a dual immaculate function.

\section{Background} \label{ch:bg_chapter}

\subsection{Compositions}\label{ch:comp_bg}
Throughout this work, $n$ denotes a positive integer. Denote $\{ 1,2,\dots,n \}$ by $[n]$. For a set of integers $S$ and $a \in \mathbb{Z}$, let $S+a = \{ s+ a \mid s \in S \}$ and similarly $S-a = \{ s- a \mid s \in S \}$.\\

    A \textit{composition} $\alpha$ is a finite ordered list of positive integers, $\alpha=(\alpha_{1},\svw{\ldots},\alpha_{\ell})$. When $\alpha_{j+1} = \svw{\cdots} = \alpha_{j+m} = i$ we often abbreviate this sublist to $i^m$. We say $\alpha$ is a \textit{composition of size} $n$ if $\sum_{i=1}^\ell \alpha_{i}=n$, denoted $\alpha \vDash n$. The \textit{length} of a composition is $\ell(\alpha)=\ell$. A \textit{partition} $\lambda$ is a composition where the parts are weakly decreasing from left to right. A \textit{partition of size} $n$ is denoted $\lambda \vdash n$. For a composition $\alpha$, denote by $partition(\alpha)$ the partition corresponding to $\alpha$, obtained by ordering the parts in decreasing order \svw{from left to right.}
    The unique composition of size $0$ is denoted $\emptyset$. 
    Throughout this work, we assume all compositions are of size $n$, for some $n >0$, and typically denote compositions by $\alpha,\beta,\gamma$.

\begin{example}
    The composition $\alpha=(4,5,1)$ has size $10$ and length $3$. The partition $\lambda=(5,3,2,2)$ has size $12$ and length $4$. 
\end{example}

Given a composition $\alpha$, the \textit{(composition) diagram of} $\alpha$ is the left-justified array of cells in $\mathbb{N} \times \mathbb{N}$ such that the $i$-th row contains $\alpha_{i}$ cells. The rows are numbered $1,\dots,\ell$ from \textit{bottom to top}, and the columns are numbered $1,2,\dots$ from left to right. The cell in the $i$-th row and $j$-th column of the diagram is denoted $(i,j)$. 
We often abuse notation and refer to the diagram of $\alpha$ as $\alpha$, and vice versa. We say $\alpha$ has a \svw{\emph{row of length $k$}} if there exists some \svw{$\alpha_{i}=k$.} 
\begin{example}
The diagram of $\alpha=(4,5,1)$, with the cell $(2,4)$ indicated by $\ast$.
\begin{center}
    \begin{ytableau}
        \  \\
        \  & \  & \  & \ast & \  \\ 
        \  & \  & \  & \  \\
    \end{ytableau}
\end{center}
\vspace{0.1cm}
\end{example}

A composition $\alpha$ is a \textit{diving-board} if $\alpha$ has at most one row of length $k \geq 2$. If $\alpha \neq (1^n)$ is a diving-board, then the unique row of length $k \geq 2$ is referred to as the \textit{long row}. 
\svw{Given a diving-board, a} \textit{bottom-aligned hook} is either $\alpha = (1^n)$ or $\alpha \neq (1^n)$ where the long row occurs at index $i=1$. 
\svw{Similarly, a} \textit{top-aligned hook} is either $\alpha = (1^n)$ or $\alpha \neq (1^n)$ where the long row occurs at index $i=\ell(\alpha)$.

\begin{example}
\svw{Diving-board} diagrams, from left to right: the top-aligned hook $(1,1,5)$, the \svw{diving-board} of size $7$ where the long row occurs at index 2, $(1,4,1,1)$, and the bottom-aligned hook $(3,1,1)$.
\begin{center}
  \ydiagram{5,1,1} \hspace{1.5cm}
        \ydiagram{1,1,4,1} \hspace{1.5cm}
        \ydiagram{1,1,3}
\end{center}
\vspace{0.1cm}
\end{example}

Given a composition $\alpha \vDash n$, a \textit{filling of} $\alpha$ is a map $T$ from the cells of the diagram of $\alpha$ to $\mathbb{N}$. \svw{We vizualize this by filling the cells of $\alpha$ with the numerical entries to obtain $T$, and call the result a \emph{tableau} of \emph{shape} $\alpha$.} We say $m = T(i,j)$ if the \svw{cell}  $(i,j)$ in $T$ contains the entry $m$. A \textit{standard filling} is a filling such that the map is bijective with image $[n]$. For a standard filling $T$ and entry $m \in [n]$, we say $R_m = i$ if the row index of $m$ is $i$, in other words, $m = T(i,j)$ for some $j$.

\begin{example}
A standard filling $T$ of the diagram $\alpha=(4,5,1)$. In particular, we have $ 1= T(3,1)$, and $R_8 = 2$.
\begin{center}
       \begin{ytableau}
        1 \\
        3 & 2 & 8 & 7 & 4 \\ 
        9 & 6 & 5 & 10 \\
    \end{ytableau}
\end{center}
\vspace{0.1cm}
\end{example}

There is a natural bijection between compositions of size $n$ and subsets of $[n-1]$. 
Given a composition $\alpha$, the set associated \svw{to} $\alpha = (\alpha_{1},\alpha_{2},\dots,\alpha_{\ell})$ is 
\[
\Set(\alpha) = \{ \alpha_{1},\alpha_{1}+\alpha_{2},\dots,\alpha_{1}+\alpha_{2}+\dots+\alpha_{\ell-1} \} \subseteq [n-1].\]
On the other hand, given a set $A = \{ A_{1} < A_{2} < \dots < A_{k} \} \svw{\subseteq} [n-1]$ written in strictly increasing order, the composition associated to $A$ is 
\[
\Comp(A) = (A_{1},A_{2}-A_{1},\dots,A_{k}-A_{k-1},n-A_{k}) \vDash n.
\]
Note the map $\Comp$ depends on the integer $n$, which is typically omitted from the notation. 
Given a composition $\alpha$, we always assume $\Set(\alpha)$ is sorted in increasing order.
For $n \geq 1$, the set associated to $(n) \vDash n$ is $\emptyset$.

\begin{example}
    The set associated to $\alpha=(4,5,1)$ is $\Set(\alpha)=\{ 4,9 \} \ME{\subseteq[9]}$. For the set $A = \{ 3,5,6,8 \} \ME{\subseteq [9]}$, the composition associated to $A$ is $\Comp(A) = (3,2,1,2,2)$. 
\end{example}

Similarly to the composition diagram of $\alpha$, we can visualize $\alpha$ as a sequence of dots and bars. In particular, $\alpha$ is associated to the sequence with $\alpha_{1}$ dots followed by a bar, then $\alpha_{2}$ dots followed by a bar, and so on.

\begin{example}
From left to right, the dots and bars diagram associated to the compositions $(4,5,1)$ and $(3,2,1,2,2)$.
\[
\bullet \bullet \bullet \, \bullet \mid \bullet \bullet \bullet \bullet \bullet \mid \bullet \hspace{1cm} 
        \bullet \bullet \,  \bullet \mid \bullet \, \bullet \mid \bullet \mid \bullet \, \bullet \mid \bullet \, \bullet
\]
\end{example}

We will be interested in three involutions on compositions. Let $\alpha \vDash n$, $\alpha = (\alpha_1, \alpha_2, \ldots, \alpha_\ell)$. The \textit{reverse} \svw{of}  $\alpha$ \svw{is the composition}  $\alpha^r=(\alpha_{\ell},\dots,\alpha_{2},\alpha_{1})$. The \textit{complement} of $\alpha$ \svw{is the composition $\alpha^c=\Comp(\Set(\alpha)^c)$,} where $\Set(\alpha)^c$ is the usual set complement in $[n-1]$. From the dots and bars interpretation of compositions, $\alpha^c$ is obtained from $\alpha$ by putting a bar between two dots if there was no bar, and removing a bar if there was one. The \textit{transpose} of $\alpha$ \svw{ is the composition $\alpha^t = (\alpha^c)^r = (\alpha^r)^c$.} 

\begin{example}
    From left to right, the composition $\alpha=(1,3,1,1)$, $\alpha^r=(1,1,3,1)$, $\alpha^c=(2,1,3)$, and $\alpha^t=(3,1,2)$.
\[
    \bullet \mid \bullet \bullet \bullet \mid \bullet \mid \bullet  \hspace{0.6cm}
         \bullet \mid \bullet \mid \bullet \bullet \bullet \mid \bullet  \hspace{0.6cm}
         \bullet \bullet \mid \bullet \mid \bullet \bullet \bullet 
         \hspace{0.6cm}
        \bullet \bullet \, \bullet \mid \bullet \mid \bullet \, \bullet
\]
\end{example}

Let $\alpha=(\alpha_{1},\alpha_{2},\ldots,\alpha_{\ell(\alpha)})$ and $\beta=(\beta_{1},\beta_{2},\ldots,\beta_{\ell(\beta)})$ be two compositions of size $n$. We say $\beta$ is less than $\alpha$ in \textit{lexicographic order}, denoted $\beta \leq_{l} \alpha$, if $\alpha=\beta$ or $\beta_{i}<\alpha_{i}$ where $i$ is the minimal index such that $\beta_{i} \neq \alpha_{i}$. 
We say $\beta$ is less than $\alpha$ in \textit{dominance order}, denoted $\beta \leq_{d} \alpha$, if for all $1 \leq k \leq \min \{ \ell(\alpha),\ell(\beta) \}$, 
$
\sum_{i=1}^k \beta_{i} \leq \sum_{i=1}^k \alpha_{i}.
$
Equivalently, if $\Set(\alpha) = \{ A_{1} < A_{2} < \dots < A_{\ell(\alpha)-1} \}$ and $\Set(\beta) = \{ B_{1} < B_{2} < \dots < B_{\ell(\beta)-1} \}$, then $\beta \leq_{d} \alpha$ if and only if $\ell(\alpha) \leq \ell(\beta)$ and $B_{k} \leq A_{k}$ for all $1 \leq k \leq  \ell(\alpha)-1$.  
Lastly, we say $\beta$ is less than $\alpha$ in \textit{refinement order}, denoted $\beta \preceq \alpha$, if the parts of $\alpha$ can be obtained in order by adding adjacent parts of $\beta$ in order.

\begin{example}
The lexicographic order on compositions of size $4$ is the following chain:
\[
 (1,1,1,1) \leq_{l} (1,1,2) \leq_{l} (1,2,1) \leq_{l} (1,3) \leq_{l} (2,1,1) \leq_{l} (2,2) \leq_{l} (3,1) \leq_{l} (4).
\]
The dominance order on compositions of size $5$ contains the following chain:
$$
(1,1,2,1) \leq_{d} (1,3,1) \leq_{d} (2,3) \leq_{d} (4,1) \leq_{d} (5),
$$
and the refinement order on compositions of size $5$ contains the following chain:
\[
(1,1,1,1,1) \preceq (1,1,2,1) \preceq (1,1,3) \preceq (2,3) \preceq (5).
\]
\end{example}

\subsection{Algebras of symmetric, quasisymmetric functions}\label{ch:sym_bg}
We define the  \svw{algebra} of quasisymmetric functions, $\QSym$, as a graded algebra, where each graded component is spanned by a basis indexed by compositions, called the \svw{\emph{monomial quasisymmetric basis} of $\QSym$, which consists of all monomial quasisymmetric functions.} Let $\alpha \vDash n$ throughout this section.

    Suppose $n \geq 1$ and $\alpha$ has length $\ell$. Then the \svw{\emph{monomial quasisymmetric function}} indexed by $\alpha$ is 
\[
M_{\alpha}=\sum x_{i_{1}}^{\alpha_{1}}x_{i_{2}}^{\alpha_{2}}\cdots x_{i_{\ell}}^{\alpha_{\ell}},
\]
where the sum is taken over all positive $\ell$-tuples $i_{1} < i_{2} < \dots < i_{\ell}$. Define $M_{\emptyset}=1$. 

\begin{example}
For $(1,3),(3,1) \vDash 4$,
\begin{align*} 
M_{(3,1)} &= x_{1}^3x_{2} + x_{1}^3x_{3} + x_{1}^3x_{4} + \svw{\cdots} + x_{2}^3x_{3} + x_{2}^3x_{4} + \svw{\cdots,}\\
M_{(1,3)} &= x_{1}x_{2}^3 + x_{1}x_{3}^3 + x_{1}x_{4}^3 + \svw{\cdots} + x_{2}x_{3}^3 + x_{2}x_{4}^3 + \svw{\cdots.}
\end{align*}
\end{example}

\svw{For $n \geq 0$,} 
\[
\QSym^n = \Span_{\mathbb{Z}} \{ M_{\alpha} \mid \alpha \vDash n \},
\] and the \textit{algebra of quasisymmetric functions} is 
\[
\QSym= \bigoplus_{n \geq 0} \QSym^n.
\]

Next, we define the \svw{\emph{fundamental basis} of $\QSym$, which consists of all fundamental quasisymmetric functions.} The \svw{\emph{fundamental quasisymmetric function}} indexed by $\alpha$ is
\[
F_{\alpha}=\sum_{\beta \preceq \alpha} M_{\beta}.
\]
For $n \geq 0$, \svw{it is well known that} the set $\{ F_{\alpha} \mid \alpha \vDash n \}$ forms a $\mathbb{Z}$-basis of $\QSym^n$. 

\begin{example}\label{ex:exp_of_F_13}
The expansion of $F_{(1,3)}$ into the monomial \svw{quasisymmetric} basis is
\[
F_{(1,3)} = M_{(1,3)}+M_{(1,2,1)} + M_{(1,1,2)} + M_{(1,1,1,1)}.
\]
\end{example}

Analogously to the definition of $\QSym$, we define the  \svw{algebra} of symmetric functions, $\Sym$, as a graded algebra, where each graded component is spanned by a basis indexed by partitions, called the \svw{\emph{monomial symmetric basis}, which consists of all monomial symmetric functions.}

    For $n \geq 1$, let $\lambda \vdash n$ be a partition. Then, the \svw{\emph{monomial symmetric function}} indexed by $\lambda$ is 
\[
m_{\lambda}=\sum_{partition(\alpha)=\lambda} M_{\alpha}.
\]
Define $m_{\emptyset}=1$.

\begin{example}\label{ex:exp_of_m_31}
    For $(3,1) \vdash 4$,
\[
m_{(3,1)} = M_{(3,1)} + M_{(1,3)}.
\]
\end{example}

\svw{For $n \geq 0$,} 
    \[
    \Sym^n = \Span_{\mathbb{Z}} \{ m_{\lambda} \mid \lambda \vdash n \},
    \] and the \textit{algebra of symmetric functions} is
\[
\Sym= \bigoplus_{n \geq 0} \Sym^n.
\]

Next, we define the renowned \svw{\emph{Schur basis} of $\Sym$, which consists of all Schur functions.} We do so as an expansion over the fundamental quasisymmetric functions, as this will mirror the expansions of the dual immaculate functions in the fundamental basis in Section \ref{ch:dI_bg}. Compare point 2 of the following definition with Definition \ref{def:imm_tabs} of immaculate tableaux.

\begin{definition}\label{def:young_tabs}
For $n\geq 1$, let $\lambda \vdash n$ be a partition. A \textit{standard Young tableau} $T$ of \ME{shape} $\lambda$ is a bijective filling of the cells of the diagram of $\lambda$ with the integers $\{ 1,2,\ldots, n \}$ such that 
\begin{enumerate}
    \item the entries in each row are increasing, when read from \textit{left to right},
    \item the entries in \textit{each} column are increasing when read from \textit{bottom to top}.
\end{enumerate}
Given a standard Young tableau $T$ of shape $\lambda$, we define the \textit{descent set} of $T$ \svw{to be}
\[
    \Des(T) = \{ i \mid i+1 \text{ appears strictly above } i \} \subseteq [n-1].
\]
\end{definition}

\begin{example}
    \svw{A standard} Young tableau $T$ of shape $(5,3,2,2) \vdash 12$ with $\Des(T)=\{1,4,6,7,10,11 \}$.
    \begin{center}
           \begin{ytableau}
        8 & 12 \\
        7 & 11 \\
        2 & 5 & 6 \\ 
        1 & 3 & 4 & 9 & 10 
    \end{ytableau}
    \end{center}
    \vspace{0.1cm}
\end{example}

Then, for $n\geq 1$ and $\lambda \vdash n$ a partition, the \svw{\textit{Schur function}} indexed by $\lambda$ is 
\[
    s_{\lambda} = \sum_{T} F_{\Comp(\Des(T))},
\]
where the sum is over all standard Young tableaux $T$ of shape \svw{$\lambda$.} Define $s_{\emptyset}=1$. 
For $n \geq 0$, \svw{it is well known that} the set $ \{ s_{\lambda} \mid \lambda \vdash n \}$ forms a $\mathbb{Z}$-basis of $\Sym^n$. 

\begin{example}\label{ex:exp_of_s_32}
We have $s_{(3,2)}= F_{(3,2)} + F_{(2,3)} + F_{(2,2,1)} + F_{(1,3,1)} + F_{(1,2,2)}$, from the following standard Young tableaux. 
\center{
    \begin{ytableau}
        4 & 5 \\ 
        1 & 2 & 3
    \end{ytableau} \hspace{0.5cm}
    \begin{ytableau}
        3 & 4 \\ 
        1 & 2 & 5 
    \end{ytableau} \hspace{0.5cm}
    \begin{ytableau}
        3 & 5 \\ 
        1 & 2 & 4 
    \end{ytableau} \hspace{0.5cm}
    \begin{ytableau}
        2 & 5 \\ 
        1 & 3 & 4
    \end{ytableau} \hspace{0.5cm}
    \begin{ytableau}
        2 & 4 \\ 
        1 & 3 & 5
    \end{ytableau} 
    }
    \vspace{0.1cm}
\end{example}
\vspace{0.2cm}

Consider the algebra $\Sym$ or $\QSym$ and let $f$ be an element of the algebra. Let $\mathcal{B}$ be a basis of the algebra, and $B$ a basis element. Then, the notation $[B]f$ denotes the coefficient of $B$ in the expansion of $f$ in $\mathcal{B}$. For example, 
$[M_{(1,1,2)}]F_{(1,3)}=1$ by Example \ref{ex:exp_of_F_13} and $[F_{(1,1,1,1,1)}]s_{(3,2)}=0$ by Example \ref{ex:exp_of_s_32}.

Recall the involutions on compositions, defined in Section \ref{ch:comp_bg}. These correspond to involutive automorphisms of $\QSym$, which we state here in terms of the fundamental basis. 

\begin{definition}\cite{luotoIntroductionQuasisymmetricSchur2013a}\label{def:qsym_involutions}
Define $\rho, \psi, \omega : \QSym \to \QSym$ by
\begin{align*}
    \rho(F_{\alpha}) &= F_{\alpha^r}, \\
    \psi(F_{\alpha}) &= F_{\alpha^c}, \\
    \omega(F_{\alpha}) &= F_{\alpha^t},
\end{align*}
and \ME{extending} linearly. 
\end{definition}

\begin{example}
    We have 
    \[
    \psi(s_{(3,2)}) = F_{(1,1,2,1)} + F_{(1,2,1,1)} + F_{(1,2,2)} + F_{(2,1,2)} + F_{(2,2,1)}.
    \]
\end{example}

Note since $\alpha^t=(\alpha^r)^c = (\alpha^c)^r$, we have $\omega = \rho \circ \psi = \psi \circ \rho$. Restricting to $\Sym$, $\rho$ restricts to the identity, while $\psi,\omega$ restrict to the well-known conjugating automorphism $\omega:\Sym \to \Sym$ \cite{luotoIntroductionQuasisymmetricSchur2013a}.
Note that for a partition $\lambda$, $\omega(s_\lambda) = s_{\lambda'}$ where $\lambda'$ is the partition corresponding to the diagram of $\lambda$, flipped along the $y=x$ line \cite[p333]{StanleyEnumComb}. Since $\lambda \mapsto \lambda'$ is an involution, it follows that the Schur basis is invariant under $\omega$, i.e. $\{ s_{\lambda} \}_{\lambda \vdash n} = \{ \omega(s_{\lambda}) \}_{\lambda \vdash n}$. Furthermore, if $\lambda = (k,1^{\ell-1})$ is a bottom-aligned hook, then $\lambda'=(\ell,1^{k-1})=\lambda^t$, so $\omega(s_{\lambda}) = s_{\lambda^t}$.

\subsection{Dual immaculate basis and variants}\label{ch:dI_bg}
\svw{In this subsection} we will define the {dual immaculate basis} of $\QSym$, \svw{which consists of all dual immaculate functions, each of which can be seen} as a generating function of immaculate tableaux, which generalize Young tableaux. We give an expansion over the monomial quasisymmetric basis, as well as the \svw{fundamental basis.}  \svw{Throughout, the first column refers to the leftmost column.}

\begin{definition}\label{def:imm_tabs}
    Let $\alpha \vDash n$ and let $T$ be a filling of $\alpha$. Then the \textit{content} of $T$ is a sequence of nonnegative integers $\beta$ such that $\beta_{i}$ is the number of entries $i$ in $T$, \svw{and is denoted $\content(T)$.} 
    \begin{enumerate}
        \item A \textit{(semi-standard) immaculate tableau} \svw{$T$ of shape} $\alpha$ is a filling of $\alpha$ such that
         
        \begin{itemize}
            \item the entries are \textit{strictly} increasing from bottom to top in the \textit{first} column, and
            \item \textit{weakly} increasing from \textit{left to right} within each row. 
        \end{itemize}
        Denote the set of immaculate tableaux by $\SSIT$, the set of immaculate tableaux of shape $\alpha$ by $\SSIT(\alpha)$, and the set of immaculate tableaux of shape $\alpha$ and content $\beta$ by $\SSIT(\alpha,\beta)$. 
	
\item A \textit{row-strict (semi-standard) immaculate tableau} \svw{$T$ of shape} $\alpha$ is a filling of $\alpha$ such that 

    \begin{itemize}
        \item the entries are \textit{weakly} increasing from bottom to top in the \textit{first} column, and
	\item \textit{strictly} increasing from \textit{left to right} within each row. 
    \end{itemize}
    Denote the set of row-strict immaculate tableaux by $\RSSIT$, the set of row-strict immaculate tableaux of shape $\alpha$ by $\RSSIT(\alpha)$, and the set of row-strict immaculate tableaux of shape $\alpha$ and content $\beta$ by $\RSSIT(\alpha,\beta)$. 
	
\item A \textit{(semi-standard) reverse immaculate tableau} \svw{$T$ of shape} $\alpha$ is a filling of $\alpha$ such that 

\begin{itemize}
    \item the entries are \textit{strictly} increasing from bottom to top in the \textit{first} column, and
	\item \textit{weakly} increasing from \textit{right to left} within each row. 
\end{itemize}
Denote the set of reverse immaculate tableaux by $\SSRIT$, the set of reverse immaculate tableaux of shape $\alpha$ by $\SSRIT(\alpha)$, and the set of reverse immaculate tableaux of shape $\alpha$ and content $\beta$ by $\SSRIT(\alpha,\beta)$. 
	
\item A \textit{row-strict (semi-standard) reverse immaculate tableau} \svw{$T$ of shape} $\alpha$ is a filling of $\alpha$ such that 

\begin{itemize}
    \item the entries are \textit{weakly} increasing from bottom to top in the \textit{first} column, and
	\item \textit{strictly} increasing from \textit{right to left} within each row. 
\end{itemize}
Denote the set of row-strict reverse immaculate tableaux by $\RSSRIT$, the set of row-strict reverse immaculate tableaux of shape $\alpha$ by $\RSSRIT(\alpha)$, and the set of row-strict reverse immaculate tableaux of shape $\alpha$ and content $\beta$ by $\RSSRIT(\alpha,\beta)$. 

\end{enumerate}
We refer to the tableaux in $\SSIT \cup \RSSIT \cup \SSRIT \cup \RSSRIT$ as \textit{variants of immaculate tableaux}. 
\end{definition}

\begin{table}[h!]
\begin{center}
\begin{tabular}{|c|c|c|c|}
\hline
Variant& First column & Row\\
\hline
Immaculate & strict B to T & weak L to R\\
\hline
Row-strict & weak B to T & strict L to R\\
\hline
Reverse & strict B to T & weak R to L\\
\hline
Row-strict reverse & weak B to T & strict R to L\\
\hline
\end{tabular}
\end{center}
\caption{Variants of immaculate tableaux.}
\end{table}

\begin{example}\label{ex:semistd_tab_ex}
\svwrev{Instances of the} four variants of immaculate tableaux of shape $\alpha = (4,5,1)$ and content $\beta=(2,2,1,1,2,1,1)$.
       \begin{align*}
    T_1 &= 
    \begin{ytableau}
        5 \\
        2 & 3 & 4 & 5 & 7 \\ 
        1 & 1 & 2 & 6 
    \end{ytableau} \in \SSIT(\alpha,\beta)\\
    T_2 &= 
    \begin{ytableau}
        3 \\
        1 & 2 & 4 & 5 & 6 \\ 
        1 & 2 & 5 & 7
    \end{ytableau} \in \RSSIT(\alpha,\beta)\\
    T_3 &= 
    \begin{ytableau}
        7 \\
        6 & 4 & 2 & 2 & 1 \\ 
        5 & 5 & 3 & 1
    \end{ytableau} \in \SSRIT(\alpha,\beta)\\
    T_4 &= 
    \begin{ytableau}
        7 \\
        6 & 5 & 3 & 2 & 1 \\ 
        5 & 4 & 2 & 1
    \end{ytableau} \in \RSSRIT(\alpha,\beta)\\
    \end{align*}
\end{example}

\vspace{0.2cm}

For $\alpha \vDash n$, the set of \ME{\textit{standard immaculate tableaux}} of shape $\alpha$ is \svw{$\SIT(\alpha)=\SSIT(\alpha,(1^n))$.} The set of \ME{\textit{standard reverse immaculate tableaux}} of shape $\alpha$ is \svw{$\SRIT(\alpha)=\SSRIT(\alpha,(1^n))$.} Note $\RSSIT(\alpha,(1^n))=\SIT(\alpha)$ and $\RSSRIT(\alpha,(1^n)) = \SRIT(\alpha)$. \svw{Denote the set of all standard immaculate tableaux by $\SIT$ and of all reverse standard immaculate tableaux by $\SRIT$.}

Let $T \in \SIT(\alpha) \cup \SRIT(\alpha)$.
\begin{enumerate}
    \item Define the $\mathfrak{S}^*$-\textit{descent set} \svw{of} $T$ by 
    \[
\Des_{\mathfrak{S}^*}(T) = \{ i \mid i+1 \text{ is strictly above }i \text{ in } T \} \subseteq [n-1].
\]
The $\mathfrak{S}^*$-\textit{descent composition} \ME{of} $T$ is $\Comp(\Des_{\mathfrak{S}^*}(T))$. 
\item Define the $R\mathfrak{S}^*$-\textit{descent set} \svw{of} $T$ by 
\[
\Des_{R\mathfrak{S}^*}(T) =  \{ i \mid i+1 \text{ is weakly below }i \text{ in } T \} \subseteq [n-1].
\]
The $R\mathfrak{S}^*$-\textit{descent composition} \ME{of} $T$ is $\Comp(\Des_{R\mathfrak{S}^*}(T))$. 
\end{enumerate}

We typically denote the descent composition of a tableau by $\beta$. Let $\beta \vDash n$, and $\ast$ denote either $\mathfrak{S}^*$ or $R\mathfrak{S}^*$. Let $\SIT_{\ast}(\alpha;\beta)$ denote the subset of $\SIT(\alpha)$ with $\ast$-descent composition $\beta$, and let $\SRIT_{\ast}(\alpha;\beta)$ denote the subset of $\SRIT(\alpha)$ with $\ast$-descent composition $\beta$. 
We typically abuse the notation and use $\Des_{\ast}$ to denote both the descent set and descent composition, which should be clear from the context.

For a standard (reverse) immaculate tableau $T$ of shape $\alpha \vDash n$, note that $\Des_{R\mathfrak{S}^*}(T) = (\Des_{\mathfrak{S}^*}(T))^c$ as subsets of $[n-1]$, and so 
\[
\Comp(\Des_{R\mathfrak{S}^*}(T)) = (\Comp(\Des_{\mathfrak{S}^*}(T)))^c.
\]

\begin{remark}\label{rem:descent_comp_as_runs}
The $\mathfrak{S}^*$-descent composition of a standard (reverse) immaculate tableau $T$ corresponds to the lengths of the maximal runs formed by reading the entries in the order $1,2,\dots,n$ weakly down rows. For example, the maximal runs in $S_1$ in Example \ref{ex:std_tab_ex} are $1 \, 2,3 \, 4,5 \, 6,7 \, 8 \, 9,10$, and so the $\mathfrak{S}^*$-descent composition of $S_1$ is $(2,2,2,3,1)$. The integers $5,6$ form a run since the row index of $5$ is $R_5=2$, while $R_6=2$, so $6$ is weakly below $5$. The run $5,6$ is maximal since $5$ is strictly above $4$, i.e. $R_4 = 1 < R_5 = 2$, and $7$ is strictly above $6$, i.e. $R_6=2 < R_7 = 3$. 

Similarly, the $R\mathfrak{S}^*$-descent composition of a standard (reverse) immaculate tableau $T$ corresponds to the lengths of the maximal runs formed by reading the entries in the order $1,2,\dots,n$ strictly up rows. 
\end{remark}

\begin{example}\label{ex:std_tab_ex}
\svwrev{Instances of the} four variants of standard immaculate tableaux of shape $\alpha = (4,5,1)$, with the associated descent compositions.
   \begin{align*}
    S_1 &= 
    \begin{ytableau}
        7 \\
        3 & 5 & 6 & 8 & 10 \\ 
        1 & 2 & 4 & 9 
    \end{ytableau} \in \SIT_{\dI^*}(\alpha;(2,2,2,3,1)). \\
    S_2 &= 
    \begin{ytableau}
        5 \\
        2 & 4 & 6 & 8 & 9 \\ 
        1 & 3 & 7 & 10
    \end{ytableau} \in \SIT_{R\dI^*}(\alpha;(2,3,1,2,1,1)).\\
    S_3 &= 
    \begin{ytableau}
        10 \\
        9 & 6 & 4 & 3 & 1 \\ 
        8 & 7 & 5 & 2
    \end{ytableau} \in \SRIT_{\dI^*}(\alpha;(2,3,3,1,1)).\\
    S_4 &= 
    \begin{ytableau}
        10 \\
        9 & 8 & 5 & 4 & 2 \\ 
        7 & 6 & 3 & 1
    \end{ytableau} \in \SRIT_{R\dI^*}(\alpha;(2,2,1,1,2,2)).
\end{align*}
\end{example}

Define the map $\flip : \SIT \cup \SRIT \to \SIT \cup \SRIT$ as follows. Let $T \in \SIT \cup \SRIT$ have shape $\alpha \vDash n$. Obtain $\flip(T)$ from $T$ by reversing the \svw{order of the} rows of $T$, and replacing each entry $i$ with $n+1-i$. Note $\flip$ is an involution.

\begin{lemma}\label{lem:flip_reverses_content}\cite[Lemma 3.5]{masonLiftingDualImmaculate2021}
    Let $\alpha,\beta \vDash n$ and $\ast \in \{ \mathfrak{S}^*,R\mathfrak{S}^* \}$.
\begin{enumerate}
    \item \svw{Let} $T \in \SIT_{\ast}(\alpha;\beta)$. Then $\flip(T) \in \SRIT_{\ast}(\alpha^r;\beta^r)$.
\item \svw{Let} $T \in \SRIT_{\ast}(\alpha;\beta)$. Then $\flip(T) \in \SIT_{\ast}(\alpha^r;\beta^r)$.
\end{enumerate}
\end{lemma}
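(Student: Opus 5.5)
We verify directly from the definitions that $\flip$ sends tableaux of one type to the other, with shape and descent composition reversed. By symmetry it suffices to prove part (1): part (2) follows by the same argument with left and right interchanged, or by applying (1) to $\flip(T)$ and using that $\flip$ is an involution.

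\textbf{Shape and filling conditions.} Let $T \in \SIT_{\ast}(\alpha;\beta)$ with $\ell=\ell(\alpha)$, and set $S=\flip(T)$. Row $i$ of $S$ is row $\ell+1-i$ of $T$, kept in the same left-to-right position, with each entry $m$ replaced by $n+1-m$. Hence row $i$ of $S$ has length $\alpha_{\ell+1-i}$, so $S$ has shape $\alpha^r$. Since $m\mapsto n+1-m$ is a bijection of $[n]$, $S$ is a standard filling. The rows of $T$ increase left to right, so after complementing they decrease left to right, that is, they increase right to left. The first column of $T$ increases bottom to top. Reversing the row order and complementing the entries are both order-reversing, so the first column of $S$ again increases bottom to top. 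Thus $S\in\SRIT(\alpha^r)$.

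\textbf{Descent sets.} In terms of row indices, entry $n+1-m$ of $S$ lies in row $R^S_{n+1-m}=\ell+1-R^T_m$. Take $j\in[n-1]$ and put $m=n-j$, so that $j=n+1-(m+1)$ and $j+1=n+1-m$. Then
\[
R^S_{j+1}>R^S_j \iff \ell+1-R^T_m>\ell+1-R^T_{m+1} \iff R^T_{m+1}>R^T_m .
\]
So $j\in\Des_{\mathfrak{S}^*}(S)$ exactly when $n-j\in\Des_{\mathfrak{S}^*}(T)$, which gives $\Des_{\mathfrak{S}^*}(S)=n-\Des_{\mathfrak{S}^*}(T)$.

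\textbf{Descent compositions.} The map $A\mapsto n-A$ on subsets of $[n-1]$ corresponds to reversal of compositions: reading the dots-and-bars picture backwards sends a bar after position $a$ to a bar after position $n-a$. Hence $\Comp(\Des_{\mathfrak{S}^*}(S))=\beta^r$ when $\ast=\mathfrak{S}^*$.

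For $\ast=R\mathfrak{S}^*$, the descent set is the set complement of the $\mathfrak{S}^*$-descent set. Since complementing commutes with $A\mapsto n-A$, we get $\Des_{R\mathfrak{S}^*}(S)=n-\Des_{R\mathfrak{S}^*}(T)$. Equivalently, the composition reversal commutes with composition complement, $(\gamma^c)^r=(\gamma^r)^c$, which gives the claim.

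The only step requiring care is the index bookkeeping in the descent-set computation, that is, matching $j$ with $m=n-j$ and checking that "strictly above" is preserved rather than turned into "weakly below". The computation above shows it is preserved, because both the row flip and the value complement are order-reversing.
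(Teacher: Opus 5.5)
Your proof is correct. The paper gives no proof of this lemma; it quotes it from Mason and Searles (Lemma 3.5 there). Your direct verification from the definitions is therefore a self-contained substitute rather than a competing route.

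The shape check and the first-column check are right. The key identity $R^{\flip(T)}_{n+1-m}=\ell+1-R^T_m$ gives $\Des_{\dI^*}(\flip(T))=n-\Des_{\dI^*}(T)$, and combined with $n-\Set(\beta)=\Set(\beta^r)$ this is exactly what is needed. The $R\dI^*$ case then follows because $A\mapsto n-A$ is a bijection of $[n-1]$ and so commutes with set complement.

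One correction concerns part (2). The alternative ``apply (1) to $\flip(T)$ and use that $\flip$ is an involution'' is circular. You can only apply (1) to $\flip(T)$ once you already know $\flip(T)\in\SIT$. Part (1) together with the involution property only shows that $\flip$ maps $\SIT_\ast(\alpha^r;\beta^r)$ injectively into $\SRIT_\ast(\alpha;\beta)$. It does not show that every element of $\SRIT_\ast(\alpha;\beta)$ is hit.

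Your other option is the valid one. The first-column and descent-set computations never use the direction in which the rows increase. So for $T\in\SRIT$ the only change is that rows increasing right to left become rows increasing left to right after $m\mapsto n+1-m$.
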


\begin{example}
\svw{The} flip map applied to the standard tableaux $S_1$ and $S_4$ in Example \ref{ex:std_tab_ex}.
    \begin{align*}
    \flip(S_1) &= 
    \begin{ytableau}
        10 & 9 & 7 & 2 \\
        8 & 6 & 5 & 3 & 1 \\ 
        4
    \end{ytableau} \in \SRIT_{\dI^*}((1,5,4);(1,3,2,2,2)). \\
    \flip(S_4) &= 
    \begin{ytableau}
        4 & 5 & 8 & 10 \\
        2 & 3 & 6 & 7 & 9 \\ 
        1 
    \end{ytableau} \in \SIT_{R\dI^*}((1,5,4);(2,2,1,1,2,2)).
\end{align*}  
\end{example}

We are now ready to define the \svw{\emph{dual immaculate basis}} of $\QSym$, \svw{which consists of all dual immaculate functions}, as well as its variants \svw{that will be named analogously.} 

\begin{definition}\label{defn:dI_over_M}
For $n \geq 1$, let $\alpha \vDash n$ be a composition. 
\begin{enumerate}
    \item \cite[Proposition 3.36]{bergLiftSchurHallLittlewood2014} The \textit{dual immaculate function} \svw{indexed by} $\alpha$ is 
    \[
\mathfrak{S}^*_{\alpha} = \sum_{\beta \vDash n} K_{\alpha,\beta} M_{\beta},
\]
where $K_{\alpha,\beta} = |\SSIT(\alpha,\beta)|.$

\item \svw{\cite[p12]{nieseRowstrictDualImmaculate2023}} The \textit{row-strict dual immaculate function} \svw{indexed by} $\alpha$ is \[
R\mathfrak{S}^*_{\alpha} = \sum_{\beta \vDash n} K_{\alpha,\beta}^{R\mathfrak{S}^*} M_{\beta},
\]
where $K_{\alpha,\beta}^{R\mathfrak{S}^*} = |\RSSIT(\alpha,\beta)|.$

\item \cite[Proposition 3.2.13]{daughertySchurlikeBasesTheir2024} The \textit{reverse dual immaculate function} \svw{indexed by} $\alpha$ is \[
\BdI^*_{\alpha} = \sum_{\beta \vDash n} K_{\alpha,\beta}^{\BdI^*} M_{\beta},
\]
where $K_{\alpha,\beta}^{\BdI^*} = |\SSRIT(\alpha,\beta)|.$

\item \cite[Proposition 3.2.32]{daughertySchurlikeBasesTheir2024} The \textit{row-strict reverse dual immaculate function} \svw{indexed by} $\alpha$ is \[
R\BdI^*_{\alpha} = \sum_{\beta \vDash n} K_{\alpha,\beta}^{R\BdI^*} M_{\beta},
\]
where $K_{\alpha,\beta}^{R\BdI^*} = |\RSSRIT(\alpha,\beta)|.$

\end{enumerate}
Define $\mathfrak{S}^*_{\emptyset} = R\mathfrak{S}^*_{\emptyset} = \BdI^*_{\emptyset} = R\BdI^*_{\emptyset }= 1$. 
\end{definition}

\begin{example}
We have $R\mathfrak{S}^*_{(2,1)}= M_{(2,1)}+ M_{(1,2)}+ 2M_{(1,1,1)}$, from the following \svw{row-strict semi-standard}  immaculate tableaux.
\center{
        \begin{ytableau}
        1 \\ 
        1 & 2
    \end{ytableau}\hspace{0.8cm}
    \begin{ytableau}
        2 \\ 
        1 & 2
    \end{ytableau} \hspace{0.8cm}
     \begin{ytableau}
        2 \\ 
        1 & 3
    \end{ytableau} \hspace{0.8cm}
    \begin{ytableau}
        3 \\ 
        1 & 2
    \end{ytableau} 
    }
    \vspace{0.1cm}
\end{example}

\begin{proposition}\cite[Proposition 3.15]{bergLiftSchurHallLittlewood2014}\label{prop:prop315}
Let $\alpha,\beta \vDash n$. Then $K_{\alpha,\alpha}=1$. Moreover, if $K_{\alpha,\beta}>0$, then $\beta \leq_{l} \alpha$. 
\end{proposition}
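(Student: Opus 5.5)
We prove the two claims separately, working directly with semi-standard immaculate tableaux of shape $\alpha$ and content $\beta$. Throughout, write $\alpha=(\alpha_1,\ldots,\alpha_\ell)$, and let $T \in \SSIT(\alpha,\beta)$ be arbitrary.

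\emph{The lexicographic bound.} The key observation is that the first column of $T$ is strictly increasing from bottom to top. Hence the entries $1,2,\ldots$ occupy at most one first-column cell each. Moreover, every row is weakly increasing from left to right. Any entry equal to $1$ lies in some row $i$, and that row's first entry is at most $1$, so it equals $1$. Strictness in the first column then forces every $1$ to lie in the bottom row. Consequently $\beta_1 \le \alpha_1$.

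We then argue inductively. Suppose $\beta_j=\alpha_j$ for all $j<k$. We want to show that the entries $1,\ldots,k-1$ fill exactly rows $1,\ldots,k-1$, and that every entry $k$ lies in row $k$. We proceed by induction on $k$, proving the stronger statement that each entry $j\le k$ lies in a row $i\le j$.

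Suppose an entry $j$ sits in row $i$. The first entry of row $i$ is at most $j$. The first column is strictly increasing and consists of positive integers, so the first entry of row $i$ is at least $i$. Hence $i\le j$.

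Now assume $\beta_j=\alpha_j$ for $j<k$. The entries $1,\ldots,k-1$ lie in rows $1,\ldots,k-1$, and they number $\alpha_1+\cdots+\alpha_{k-1}$, exactly the number of cells in those rows. So they fill rows $1,\ldots,k-1$ completely. Any entry $k$ then lies in a row $i\le k$ that is not already full, which forces $i=k$. Therefore $\beta_k\le\alpha_k$.

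This shows that at the first index where $\beta$ and $\alpha$ differ, $\beta$ is strictly smaller. One technical point needs care: if $\beta$ is a prefix-type case, it could run out of parts. Since both compositions have size $n$, $\beta$ cannot be a proper prefix of $\alpha$ or vice versa. So either $\beta=\alpha$ or the first differing index exists with $\beta_k<\alpha_k$. Hence $\beta\le_l\alpha$.

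\emph{Uniqueness.} Taking $\beta=\alpha$, the same induction shows that the entries equal to $k$ fill row $k$ exactly, for every $k$. This pins down $T$: it is the tableau with every cell of row $k$ equal to $k$. This tableau is indeed immaculate, since the first column reads $1,2,\ldots,\ell$ and rows are constant. Hence $K_{\alpha,\alpha}=1$.

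The main obstacle is stating the inductive invariant cleanly: we need the "entry $j$ lies in a row $\le j$" bound together with the counting argument that forces rows $1,\ldots,k-1$ to be saturated. Everything else is routine.
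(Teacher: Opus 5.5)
Your proof is correct and takes essentially the same approach as the cited argument of Berg et al. The paper gives no proof of its own, but its remark that ``row $i$ is filled with entries $i$'' reflects that argument, and its leading-term proofs for $\rho,\psi,\omega$ use the same counting. The strictly increasing first column forces each entry $j$ into a row of index at most $j$. If $\beta$ agrees with $\alpha$ up to index $k-1$, rows $1,\ldots,k-1$ are saturated, so every entry $k$ lies in row $k$. This gives $\beta_k\le\alpha_k$, and when $\beta=\alpha$ it forces the unique tableau whose row $k$ is filled with $k$'s.
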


Alternatively, we give an expansion over the fundamental basis using descent compositions \svw{that will also be useful to us.}

\begin{proposition}\label{prop:dI_over_F}\label{def:dI_fcns}
For $n \geq 1$, let $\alpha \vDash n$ be a composition. 
\begin{enumerate}
    \item \cite[Proposition 3.37]{bergLiftSchurHallLittlewood2014} The dual immaculate function indexed by $\alpha$ is 
\[
\mathfrak{S}^*_{\alpha} = \sum_{T \in \SIT(\alpha)} F_{\Des_{\mathfrak{S}^*}(T)}.
\]

\item \cite[Theorem 3.6]{nieseRowstrictDualImmaculate2023} The row-strict dual immaculate function indexed by $\alpha$ is \[
R\mathfrak{S}^*_{\alpha} = \sum_{T \in \SIT(\alpha)} F_{\Des_{R\mathfrak{S}^*}(T)}.
\]

\item \svw{\cite[Equation 3.1]{masonLiftingDualImmaculate2021}} The reverse dual immaculate function indexed by $\alpha$ is \[
\BdI^*_{\alpha} = \sum_{T \in \SRIT(\alpha)} F_{\Des_{\mathfrak{S}^*}(T)}.
\]

\item \cite[Proposition 3.2.30]{daughertySchurlikeBasesTheir2024} The row-strict reverse dual immaculate function indexed by $\alpha$ is \[
R\BdI^*_{\alpha} = \sum_{T \in \SRIT(\alpha)} F_{\Des_{R\mathfrak{S}^*}(T)}.
\]

\end{enumerate} 
\end{proposition}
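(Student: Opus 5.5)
The plan is to prove all four expansions with one standardization argument, adapted to each variant. Start from the monomial expansions in Definition~\ref{defn:dI_over_M} and the identity $F_{\gamma}=\sum_{\beta\preceq\gamma}M_\beta$. It then suffices to give, for each variant, a bijection between semi-standard tableaux $T$ of shape $\alpha$ and content $\beta$ (of the relevant type) and standard tableaux $S$ of shape $\alpha$ satisfying $\beta \preceq \Des_\ast(S)$. Here $\ast$ is $\mathfrak{S}^*$ for cases 1 and 3 and $R\mathfrak{S}^*$ for cases 2 and 4. Granting the bijection,
\[
\sum_{\beta}|\{T\}|\,M_\beta=\sum_{S}\sum_{\beta\preceq \Des_\ast(S)}M_\beta=\sum_S F_{\Des_\ast(S)} .
\]
The condition $\beta\preceq\Des_\ast(S)$ means $\Set(\Des_\ast(S))\subseteq\Set(\beta)$. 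So each block of consecutive integers determined by $\beta$ contains no $\ast$-descent.

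For case 1 (immaculate), the standardization replaces the $\beta_i$ copies of $i$ by the next $\beta_i$ consecutive integers. The copies are labelled starting with the highest row and moving down, and from left to right within a row. Rows remain increasing, since equal entries in a row get increasing labels and larger values get larger labels. The first column stays strictly increasing, since it contains at most one copy of each value. Inside a block, each label $k+1$ lies weakly below $k$, so there is no $\mathfrak{S}^*$-descent inside the block. For the inverse, given $S$ and $\beta$, collapse each block of $\beta$ to the single value $i$. If $a<b$ lie in one block, the chain $a,a+1,\dots,b$ moves weakly down at each step, so $b$ is weakly below $a$. This gives weakly increasing rows (an equal-valued pair in one row is fine) and forbids two entries of a block in the first column. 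Hence the first column is strict, and the two maps are mutually inverse.

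For case 2 (row-strict), each row contains at most one copy of each value, and a block must have every step going strictly up. So the copies of $i$ are labelled from the lowest row upward. Row strictness is preserved, and a weak first column becomes strict, since equal first-column entries lie in different rows and are labelled upward. Conversely, collapsing a block whose entries climb strictly upward never puts two equal values in one row, and it keeps the first column weakly increasing. Cases 3 and 4 are identical after mirroring the row order: use right-to-left within rows for case 3, and note that case 4 has one copy per row anyway. Alternatively, one can transport cases 1 and 2 via $\flip$ (Lemma~\ref{lem:flip_reverses_content}) together with a semi-standard analogue of $\flip$ and $\rho$, but the direct argument is cleaner.

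The main obstacle is checking carefully in each variant that the chosen labelling order of equal entries is compatible with both the row condition and the first-column condition. In particular, the inverse must really land in the semi-standard set; the chain argument above is the key point there. The remaining steps are bookkeeping.
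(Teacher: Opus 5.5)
Your standardization argument is correct. You choose the following reading orders for equal entries:
\begin{itemize}
\item $\SSIT$: top row down, left to right within a row;
\item $\RSSIT$ and $\RSSRIT$: bottom row up, which is a total order because each value occurs at most once per row;
\item $\SSRIT$: top row down, right to left within a row.
\end{itemize}
These are exactly the orders under which both the row condition and the first-column condition survive standardization and collapse. Your chain argument (consecutive labels in a block move weakly down, respectively strictly up) is the right tool for the inverse direction.

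The paper gives no proof of its own here; it imports all four expansions from the cited sources. Your route is the standard standardization mechanism (the same one as for Schur functions), applied uniformly to all four variants. It starts directly from Definition \ref{defn:dI_over_M} and $F_\gamma=\sum_{\beta\preceq\gamma}M_\beta$.

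Within the paper's own toolkit there is a shorter route for parts 2--4. You would deduce them from part 1 using Proposition \ref{prop:variants_dI_as_images}, the identity $\Des_{R\mathfrak{S}^*}(T)=(\Des_{\mathfrak{S}^*}(T))^c$, and the $\flip$ bijection of Lemma \ref{lem:flip_reverses_content}. That route depends on Proposition \ref{prop:variants_dI_as_images}, which the paper also only cites and which is naturally proved from these very $F$-expansions. Your argument avoids that dependence and needs nothing beyond the monomial expansions.

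The one step you assert without detail is that standardizing the collapse of $S$ returns $S$. Equivalently, within each block the labels of $S$ already appear in your prescribed reading order. This follows from the same chain argument:
\begin{itemize}
\item Cases 1 and 3: a larger label in a block lies weakly below a smaller one, and labels increase along a row in the relevant direction.
\item Cases 2 and 4: the labels in a block increase strictly with the row index.
\end{itemize}
So this step is routine bookkeeping, as you say.
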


\begin{example}
We have $\mathfrak{S}^*_{(2,3)}= F_{(2,3)}+ F_{(1,4)}+ F_{(1,3,1)}+ F_{(1,2,2)}$, from the following standard immaculate tableaux. 
\center{
        \begin{ytableau}
        3 & 4 & 5 \\ 
        1 & 2
    \end{ytableau}\hspace{0.8cm}
    \begin{ytableau}
        2 & 3 & 4 \\ 
        1 & 5
    \end{ytableau} \hspace{0.8cm}
     \begin{ytableau}
        2 & 3 & 5 \\ 
        1 & 4
    \end{ytableau} \hspace{0.8cm}
    \begin{ytableau}
        2 & 4 & 5 \\ 
        1 & 3
    \end{ytableau} 
    }
    \vspace{0.1cm}
\end{example}

\begin{proposition}\label{prop:variants_dI_as_images}
Let $\alpha \vDash n$. \svw{Then}
\begin{align}
R\mathfrak{S}^*_{\alpha} &= \psi(\mathfrak{S}^*_{\alpha}), \label{eq:dI_psi} \\
\BdI^*_{\alpha} &= \rho(\mathfrak{S}^*_{\alpha^r}), \label{eq:dI_rho} \\
R\BdI^*_{\alpha} &= \omega(\mathfrak{S}^*_{\alpha^r}). \label{eq:dI_omega}
\end{align}
Moreover, the sets $\{ \mathfrak{S}^*_{\alpha} \}_{\alpha \vDash n}, \{ R\mathfrak{S}^*_{\alpha} \}_{\alpha \vDash n}, \{ \BdI^*_{\alpha} \}_{\alpha \vDash n} , \{ R\BdI^*_{\alpha} \}_{\alpha \vDash n}$ all form $\mathbb{Z}$-bases of $\QSym^n$.
\end{proposition}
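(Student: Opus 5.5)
The plan is to work entirely with the fundamental expansions in Proposition~\ref{prop:dI_over_F} and the definitions of $\rho,\psi,\omega$ in Definition~\ref{def:qsym_involutions}. Each identity will be checked term by term on the $F$-basis, using bijections between the relevant sets of standard tableaux.

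For \eqref{eq:dI_psi}, apply $\psi$ to $\mathfrak{S}^*_{\alpha}=\sum_{T\in\SIT(\alpha)}F_{\Des_{\mathfrak{S}^*}(T)}$ to get $\sum_{T}F_{(\Des_{\mathfrak{S}^*}(T))^c}$. By the observation preceding Remark~\ref{rem:descent_comp_as_runs}, $\Comp(\Des_{R\mathfrak{S}^*}(T))=(\Comp(\Des_{\mathfrak{S}^*}(T)))^c$. Hence the sum equals $\sum_{T\in\SIT(\alpha)}F_{\Des_{R\mathfrak{S}^*}(T)}=R\mathfrak{S}^*_\alpha$ by part (2) of Proposition~\ref{prop:dI_over_F}.

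For \eqref{eq:dI_rho}, apply $\rho$ to $\mathfrak{S}^*_{\alpha^r}$ to get $\sum_{T\in\SIT(\alpha^r)}F_{(\Des_{\mathfrak{S}^*}(T))^r}$. Lemma~\ref{lem:flip_reverses_content} with $\ast=\mathfrak{S}^*$ shows that $\flip$ maps $\SIT_{\mathfrak{S}^*}(\alpha^r;\beta)$ into $\SRIT_{\mathfrak{S}^*}(\alpha;\beta^r)$. Since $\flip$ is an involution and part (2) of the lemma gives the reverse inclusion, this is a bijection $\SIT(\alpha^r)\to\SRIT(\alpha)$ that reverses descent compositions. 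Reindexing, the sum becomes $\sum_{S\in\SRIT(\alpha)}F_{\Des_{\mathfrak{S}^*}(S)}=\BdI^*_\alpha$ by part (3).

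For \eqref{eq:dI_omega}, use $\omega=\psi\circ\rho$. We get $\omega(\mathfrak{S}^*_{\alpha^r})=\psi(\BdI^*_\alpha)=\sum_{S\in\SRIT(\alpha)}F_{(\Des_{\mathfrak{S}^*}(S))^c}$. The complement relation between the two descent sets holds for reverse tableaux as well, so this equals $R\BdI^*_\alpha$ by part (4). Alternatively, one can run the flip argument with $\ast=R\mathfrak{S}^*$ directly.

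For the basis statement, $\{\mathfrak{S}^*_\alpha\}_{\alpha\vDash n}$ is a $\mathbb{Z}$-basis of $\QSym^n$ by unitriangularity. Proposition~\ref{prop:prop315} says the $M$-expansion of $\mathfrak{S}^*_\alpha$ has leading coefficient $K_{\alpha,\alpha}=1$ and is otherwise supported on $\beta<_l\alpha$. So the transition matrix to the $M$-basis is unitriangular with respect to the lexicographic order, which is a total order, and hence invertible over $\mathbb{Z}$. The other three families are images of this basis under the $\mathbb{Z}$-linear graded automorphisms $\psi,\rho,\omega$, composed with the bijection $\alpha\mapsto\alpha^r$ on compositions of $n$, so they are also $\mathbb{Z}$-bases.

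The only step needing real care is the bijectivity of $\flip$ between $\SIT(\alpha^r)$ and $\SRIT(\alpha)$, including that it preserves the appropriate descent statistic up to reversal. This is exactly the content of Lemma~\ref{lem:flip_reverses_content}, so I expect no genuine obstacle.
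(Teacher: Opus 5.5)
Your proof is correct, but it takes a different route from the paper. The paper proves this proposition entirely by citation. The basis property of $\{\mathfrak{S}^*_\alpha\}$ is quoted from Berg et al. The three identities are quoted as Theorem 3.8 of Niese et al.\ and Theorems 3.2.12 and 3.2.33 of Daugherty. The basis property of the three variants then follows because $\rho,\psi,\omega$ are involutions.

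You instead derive the identities from the fundamental expansions in Proposition~\ref{prop:dI_over_F}. Under $\psi$, $\Des_{\mathfrak{S}^*}$ becomes its complement $\Des_{R\mathfrak{S}^*}$. Under $\rho$, the action on tableaux is the bijection $\flip\colon \SIT(\alpha^r)\to\SRIT(\alpha)$ of Lemma~\ref{lem:flip_reverses_content}, which reverses descent compositions. Since $\omega=\psi\circ\rho$, the third identity combines the first two. You also obtain the basis property of $\{\mathfrak{S}^*_\alpha\}$ directly from the unitriangularity in Proposition~\ref{prop:prop315}, which is the argument behind the cited result.

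What your route buys is transparency. It shows the proposition is a short consequence of facts already recorded in the paper, and it makes visible why each involution corresponds to each tableau variant. It does not remove the dependence on the literature, because Proposition~\ref{prop:dI_over_F} and Lemma~\ref{lem:flip_reverses_content} are themselves only cited here. Since the paper defines the variants via their $M$-expansions, those $F$-expansions are the real nontrivial input, so your argument shifts the external dependence rather than eliminating it. The paper's version is shorter but gives no insight into why the identities hold. The final step is the same in both: images of a basis under degree-preserving $\mathbb{Z}$-linear involutions, reindexed by $\alpha\mapsto\alpha^r$.
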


\begin{proof} The dual immaculate functions of degree $n$ form a basis of $\QSym^n$ \svw{by \cite[Section 3.7]{bergLiftSchurHallLittlewood2014}}. 
Equations (\ref{eq:dI_psi}), (\ref{eq:dI_rho}), (\ref{eq:dI_omega}) are \cite[Theorem 3.8]{nieseRowstrictDualImmaculate2023}, \cite[Theorem 3.2.12]{daughertySchurlikeBasesTheir2024},
\cite[Theorem 3.2.33]{daughertySchurlikeBasesTheir2024}, respectively.
\svw{Hence,} the variants of dual immaculate functions each form bases of $\QSym^n$ since $\psi,\rho,\omega$ are involutions.
\end{proof}

Finally, we end with a result classifying when the dual immaculate functions are actually symmetric. \svw{The statements are the case $\beta = \emptyset$ of \cite[Theorem 3.4]{EsipovaSymmQsymSchur} and \cite[Corollary 3.6]{EsipovaSymmQsymSchur}, respectively.}

\begin{proposition} \label{prop:dI_symmetric}
Let $\alpha \vDash n$. Then $\mathfrak{S}^*_{\alpha} \in \Sym$ if and only if $\alpha$ is bottom-aligned hook. In this case, 
\[
    \dI^*_\alpha = s_\alpha.
    \]
\end{proposition}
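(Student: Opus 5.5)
The plan is to prove the two directions separately, working throughout with the monomial expansion of Definition \ref{defn:dI_over_M} and the triangularity of Proposition \ref{prop:prop315}.

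\emph{Sufficiency.} Suppose $\alpha=(k,1^{\ell-1})$ is a bottom-aligned hook. We compare $\SSIT(\alpha,\beta)$ with the semistandard Young tableaux of shape $(k,1^{\ell-1})$ and content $\beta$. For a hook, the only column with more than one cell is the first column, and the first row is the only row with more than one cell. In both settings the constraints are the same: the first column is strictly increasing bottom to top, and the first row is weakly increasing left to right. Hence the two sets coincide, so $K_{\alpha,\beta}$ equals the Kostka number $K_{\lambda,\beta}$ with $\lambda=\alpha$. The monomial expansion of $s_\lambda$ by semistandard Young tableaux, equivalent to the $F$-expansion given earlier, then yields $\dI^*_\alpha=s_\alpha\in\Sym$. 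The case $\alpha=(1^n)$ is included.

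\emph{Necessity.} Suppose $\dI^*_\alpha\in\Sym$. Then its $M$-coefficients are constant on rearrangements, that is, $K_{\alpha,\beta}=K_{\alpha,\gamma}$ whenever $partition(\beta)=partition(\gamma)$. By Proposition \ref{prop:prop315}, $K_{\alpha,\alpha}=1$, so every rearrangement $\gamma$ of $\alpha$ satisfies $K_{\alpha,\gamma}=1>0$ and therefore $\gamma\le_l\alpha$. Taking $\gamma=partition(\alpha)$, which is the lexicographically largest rearrangement, forces $\alpha$ to be a partition $\lambda=(\lambda_1\ge\cdots\ge\lambda_\ell)$.

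It remains to show that a partition $\lambda$ with $\lambda_2\ge2$ cannot give a symmetric $\dI^*_\lambda$. I would compare the content $\beta=(\lambda_1,\ldots,\lambda_{\ell-2},\lambda_\ell,\lambda_{\ell-1})$, or some similar swap of two entries, against $\lambda$ itself and show $K_{\lambda,\beta}\ne1$. A cleaner test is to take $\beta$ a rearrangement of $\lambda$ with $\lambda_1$ moved to the end. Immaculate tableaux force the entry $1$ to appear only in row 1, and every row $i$ to begin with an entry $\ge i$, so filling conditions become restrictive. The simplest concrete choice is $\beta=(1,\ldots)$-type contents coming from $\lambda'$. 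Compare with $s_\lambda$: we have $\dI^*_\lambda=\sum K_{\lambda,\beta}M_\beta$. It suffices to exhibit one pair of rearranged contents with different counts. For the partition $(2,2)$ one checks $K_{(2,2),(1,2,1)}=1$ (namely $\begin{smallmatrix}2&3\\1&2\end{smallmatrix}$) but $K_{(2,2),(1,1,2)}=1$, and $K_{(2,2),(2,1,1)}=1$. A better test is $(1,1,1,1)$-content versus nothing. So I would instead reduce to rows 1 and 2. Take content $\beta$ whose first entries are $(\lambda_1,\lambda_2-1,1,\lambda_3,\ldots)$ and compare it with the rearrangement $(\lambda_1,1,\lambda_2-1,\lambda_3,\ldots)$. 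Counting fillings of rows 1 and 2, with the remaining rows forced, gives different numbers.

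The main obstacle is this final step: finding a uniform pair of rearranged contents whose immaculate tableau counts differ whenever $\lambda_2\ge2$. I expect this to need a careful count in which the higher rows are rigidly determined. If no uniform choice works, an alternative route is to compare the $F$-expansions of Proposition \ref{prop:dI_over_F} with $s_\lambda$. The standard immaculate tableaux contain the standard Young tableaux, and they contain strictly more when some non-first column has height $\ge2$. Since symmetric functions are determined by their $F$-expansion and $\dI^*_\lambda$ contains $s_\lambda$ plus a nonzero positive part, one would then have to show that this extra part is not symmetric, for instance by a leading-term argument in $\le_l$.
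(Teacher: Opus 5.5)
Your sufficiency argument is fine. For a bottom-aligned hook, $\SIT(\alpha)$ is exactly the set of standard Young tableaux of shape $\alpha$, with the same descent sets, so Proposition \ref{prop:dI_over_F} even gives $\mathfrak{S}^*_\alpha=s_\alpha$ directly. Your reduction of necessity to partitions, via $K_{\alpha,\alpha}=1$ and Proposition \ref{prop:prop315}, is also correct.

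There is a genuine gap, however: the necessity direction is not finished. You never exhibit, for a partition $\lambda$ with $\lambda_2\ge 2$, two rearranged contents with different immaculate Kostka numbers, and the candidates you propose do not work.
\begin{itemize}
\item The uniform test $(\lambda_1,\lambda_2-1,1,\lambda_3,\ldots)$ versus $(\lambda_1,1,\lambda_2-1,\lambda_3,\ldots)$ is vacuous when $\lambda_2=2$, since the two contents coincide.
\item For every two-row partition $\lambda=(a,b)$ this test gives equal counts. The $a$ ones must fill row $1$, and row $2$ is then forced to be $(2,\ldots,2,3)$, respectively $(2,3,\ldots,3)$, so both numbers equal $1$ (e.g. $\lambda=(3,3)$).
\item Your $(2,2)$ computations are wrong. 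In fact $K_{(2,2),(1,2,1)}=2$; you missed the tableau with rows $(1,3)$ and $(2,2)$. Compared with $K_{(2,2),(2,1,1)}=1$, this would have settled that example.
\item The fallback via $\mathfrak{S}^*_\lambda-s_\lambda$ gives no argument that the difference is non-symmetric.
\end{itemize}

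A uniform pair that works, and makes the partition reduction unnecessary, is $\beta=(1^{\ell-1},n-\ell+1)$ and its rearrangement $\beta'=(n-\ell+1,1^{\ell-1})$, where $\ell=\ell(\alpha)$.
\begin{itemize}
\item For any $\alpha$ of length $\ell$, a strictly increasing first column with values in $[\ell]$ must read $1,2,\ldots,\ell$. Every other cell must then contain $\ell$, and the rows are weakly increasing, so $K_{\alpha,\beta}=1$.
\item Since $T(i,1)\ge i$, every entry in rows $i\ge 2$ is at least $2$, so all ones lie in row $1$. Hence $K_{\alpha,\beta'}>0$ forces $\alpha_1\ge n-\ell+1$, i.e. $\alpha=(n-\ell+1,1^{\ell-1})$.
\end{itemize}
So whenever $\alpha$ is not a bottom-aligned hook, $[M_\beta]\mathfrak{S}^*_\alpha=1\neq 0=[M_{\beta'}]\mathfrak{S}^*_\alpha$, and $\mathfrak{S}^*_\alpha\notin\Sym$.

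For comparison, the paper does not prove this proposition itself; it imports it as the $\beta=\emptyset$ case of results from earlier work. With this fix, your argument would be a short self-contained alternative.
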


\section{Compositions and complement}\label{ch:comps_and_complement}
In this section we study the complement involution on compositions. Our main goal will be to show it is order-reversing with respect to the dominance order, which will be useful to us later in the study of the involution $\psi$ on the dual immaculate functions. 
Our main tools will be the dots and bars diagram for compositions, and the equivalent definition for the dominance order on subsets of $[n-1]$. We begin with a result on how the length of a composition changes after taking the complement.

\begin{lemma}\label{lem:len_complement}
    Let $\alpha \vDash n$ with $\ell(\alpha)=\ell$.
    Then $\ell(\alpha^c) = n+1 - \ell$. In particular, for $\alpha,\beta \vDash n$, $\ell(\alpha) \leq \ell(\beta)$ if and only if $\ell(\alpha^c) \geq \ell(\beta^c)$.
\end{lemma}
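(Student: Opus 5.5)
The plan is to count set sizes via the bijection $\Set$ between compositions of $n$ and subsets of $[n-1]$.

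For any $\gamma \vDash n$ we have $|\Set(\gamma)| = \ell(\gamma)-1$. This holds because $\Set(\gamma)$ consists of the $\ell(\gamma)-1$ distinct partial sums $\gamma_1, \gamma_1+\gamma_2, \ldots$, which are strictly increasing since all parts are positive. In the dots and bars picture, the same statement says the number of bars equals the number of parts minus one.

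By definition, $\alpha^c = \Comp(\Set(\alpha)^c)$. Since $\Set$ and $\Comp$ are mutually inverse, $\Set(\alpha^c) = \Set(\alpha)^c$, the complement taken in $[n-1]$. Therefore
\[
\ell(\alpha^c) - 1 = |\Set(\alpha)^c| = (n-1) - (\ell - 1) = n - \ell,
\]
which gives $\ell(\alpha^c) = n+1-\ell$. In the dots and bars language, there are $n-1$ gaps between consecutive dots; $\alpha$ has bars in $\ell-1$ of them, and $\alpha^c$ has bars in the remaining $n-\ell$ gaps.

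For the second statement, apply this formula to both $\alpha$ and $\beta$ of size $n$. The inequality $\ell(\alpha) \le \ell(\beta)$ is equivalent to $n+1-\ell(\alpha) \ge n+1-\ell(\beta)$, which is exactly $\ell(\alpha^c) \ge \ell(\beta^c)$.

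The only point needing care is the edge case $\alpha = (n)$. Here $\Set(\alpha) = \emptyset$, so $\Set(\alpha)^c = [n-1]$ and $\alpha^c = (1^n)$, and the formula gives $\ell(\alpha^c) = n = n+1-1$ as required. When $n=1$, $[0]=\emptyset$ and the formula still holds. I do not expect any step to be a real obstacle.
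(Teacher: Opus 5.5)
Your proof is correct and is essentially the paper's argument: the paper counts bars in the dots-and-bars diagram ($b=\ell-1$ bars in $\alpha$ and $n-(b+1)$ in $\alpha^c$), which is the same as your count of $|\Set(\alpha)|$ and $|\Set(\alpha)^c|$ inside $[n-1]$. You also write out the ``in particular'' equivalence and the edge cases, which the paper leaves implicit.
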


\begin{proof}
    Associate a composition to its dots and bars diagram. Note the length of a composition is one more than the number of bars. Let $b$ be the number of bars in $\alpha$ such that $b+1 = \ell$. Then the number of bars in $\alpha^c$ is $n-(b+1)$. Thus, 
    \[
    \ell(\alpha^c) = n-(b+1) + 1 = n+1 - \ell.
    \]
\end{proof}

Let $A= \{A_1,A_2,\ldots,A_\ell \},B=\{B_1,B_2,\ldots,B_p \} \subseteq [n]$. We say $B$ is \textit{pairwise less than} $A$ if $\ell \leq p$ and $B_k \leq A_k$ for all $1 \leq k \leq \ell$.

\begin{proposition}\label{prop:dom_complement}
    The complement operation on compositions is an order-reversing involution with respect to the dominance order. In other words, 
for $\alpha,\beta \vDash n$, $\beta \leq_d \alpha$ if and only if $\beta^c \geq_{d} \alpha^c$. 
\end{proposition}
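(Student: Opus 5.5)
The plan is to work entirely with the set formulation of dominance from Section \ref{ch:comp_bg}: writing $A=\Set(\alpha)$ and $B=\Set(\beta)$, we have $\beta \leq_d \alpha$ if and only if $B$ is pairwise less than $A$. Since $\Set(\alpha^c)=\Set(\alpha)^c$ by definition of the complement, the statement becomes a statement about subsets of $[n-1]$ and their set complements. Because complement is an involution ($(\alpha^c)^c=\alpha$), it suffices to prove one direction: if $\beta\leq_d\alpha$ then $\alpha^c\leq_d\beta^c$. The converse then follows by applying this direction to the pair $\alpha^c,\beta^c$.

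The key step is to reformulate ``pairwise less than'' as a counting condition on initial segments. I would prove that, for $A,B\subseteq[n-1]$, $B$ is pairwise less than $A$ if and only if
\[
|B\cap[m]| \geq |A\cap[m]| \quad \text{for all } 0\le m\le n-1.
\]
For the forward direction, suppose $A_k\le m$. Then $B_k\le A_k\le m$, so $B$ has at least $k$ elements in $[m]$. For the reverse direction, taking $m=n-1$ gives $|B|\ge|A|$, which is the length condition (compare Lemma \ref{lem:len_complement}). Taking $m=A_k$ gives $|B\cap[A_k]|\ge k$, hence $B_k\le A_k$.

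With this criterion the complement is immediate, since
\[
|B^c\cap[m]| = m-|B\cap[m]| \leq m-|A\cap[m]| = |A^c\cap[m]|
\]
for every $m$. So $A^c$ is pairwise less than $B^c$, that is, $\alpha^c\leq_d\beta^c$.

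The only real obstacle is getting the equivalence between the pairwise condition and the initial-segment counting condition exactly right. In particular, the pairwise definition only compares indices up to $|A|$, and the length inequality has to come out of the counting condition correctly. Once that lemma is in place, the rest is a one-line computation together with the involution argument.
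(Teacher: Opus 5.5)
Your proof is correct and takes essentially the same route as the paper: both pass to $A=\Set(\alpha)$ and $B=\Set(\beta)$ and use the initial-segment inequality $|A\cap[m]|\le|B\cap[m]|$. The paper then gets the length condition from Lemma \ref{lem:len_complement} and proves $A^c_k\le B^c_k$ by a minimal-counterexample argument, which is just the reverse direction of your counting lemma applied to $A^c,B^c$; your explicit involution step also supplies the converse direction that the paper leaves implicit.
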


\begin{proof}
    Suppose $\beta \leq_{d} \alpha$, with corresponding sets $B,A \subseteq [n-1]$, respectively. Using the equivalent definition of the dominance order, $B$ is pairwise less than $A$. To show $\alpha^c \leq_d \beta^c$, we need to show $A^c$ is pairwise less than $B^c$. 
    By Lemma \ref{lem:len_complement}, $\ell(\alpha^c) \geq \ell(\beta^c)$ since $\ell(\alpha) \leq \ell(\beta)$, as required by the definition of dominance order. Observe that since $B$ is pairwise less than $A$, for all $m \in [n-1]$ \svw{we have} $|\{ a \in A \mid a \leq m \}| \leq | \{b \in B \mid b \leq m \} |$.

If $A^c_k \leq B^c_k$ for all $1 \leq k \leq \ell(\beta^c) - 1$, then we are done. Otherwise, let $k$ be the minimal index such that $B^c_{k} < A^c_{k}$ and denote $m=B^c_{k}$. Then $\{ \svw{\tilde{b}} \in B^c \mid \svw{\tilde{b}} \leq m \} = \{ B^c_{1} , \dots, B^c_{k}\}$, while $\{ \svw{\tilde{a}} \in A^c \mid \svw{\tilde{a}} \leq m \} = \{ A^c_{1},\dots,A^c_{k-1} \}$ by the minimality of $k$. Thus,
\begin{align*}
|\{ b \in B \mid b \leq m \}| &= m - | \{ \svw{\tilde{b}} \in B^c \mid \svw{\tilde{b}} \leq m \} | \\ 
&= m - (k-1) - 1 \\
&= | \{ a \in A \mid a \leq m \}| - 1.
\end{align*}
But this contradicts our earlier observation that $|\{ a \in A \mid a \leq m \}| \leq | \{b \in B \mid b \leq m \} |$. 
\end{proof}

\begin{example}
    Let $\alpha = (4,5,1)$ and $\beta=(3,6,1)$. Then $\beta \leq_d \alpha$ and \svw{$\beta^c = (1,1,2,1^4, 2)) \geq_d (1,1,1, 2,1^3,2) = \alpha^c$. }
    \begin{align*}
        \alpha &= \bullet \bullet \bullet \, \bullet \mid \bullet \bullet \bullet \bullet \bullet \mid \bullet, \hspace{1cm} \alpha^c = \bullet \mid \bullet \mid \bullet \mid \bullet \, \bullet \mid \bullet \mid \bullet \mid \bullet \mid \bullet \, \bullet \\
        \beta &= \bullet \bullet \bullet \mid \bullet \bullet \bullet \bullet \bullet \, \bullet \mid \bullet, 
        \hspace{1cm} \beta^c = \bullet \mid \bullet \mid \bullet \, \bullet \mid \bullet \mid \bullet \mid \bullet \mid \bullet \mid \bullet \, \bullet 
    \end{align*}
\end{example}

We finish with an observation that the transpose operation preserves the property of being a top- \svw{(or bottom-)aligned} hook.

\begin{proposition}\label{prop:transpose_preserves_hooks}
Let $\alpha \vDash n$. 
    Then $\alpha$ is a \svw{top- (bottom-)aligned} hook if and only if $\alpha^t$ is a \svw{top- (bottom-)aligned} hook.
\end{proposition}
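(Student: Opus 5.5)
We argue directly with the dots and bars diagram. First note that $(1^n)$ is both a top- and bottom-aligned hook, and $(1^n)^t = (n)$, which is also a diving-board with long row at index $1=\ell$ (if $n\geq 2$), so it is both a top- and bottom-aligned hook. Conversely $(n)^t=(1^n)$. Thus these two compositions can be set aside, and it suffices to treat hooks with a genuine long row that are not $(n)$. Since transpose is an involution ($\alpha^{tt}=\alpha$), it also suffices to prove only one direction: if $\alpha$ is a top- (bottom-)aligned hook then so is $\alpha^t$.

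Take a bottom-aligned hook $\alpha=(k,1^{\ell-1})$ with $k\geq 2$ and $\ell \geq 2$. Its dots and bars diagram consists of $k$ dots with no bars between them, followed by $\ell-1$ dots each separated by a bar from its neighbours. Taking the complement toggles every gap, so the first $k$ dots are now separated by bars and the last $\ell$ dots (counting the $k$-th dot) are joined with no bars. This gives $\alpha^c=(1^{k-1},\ell)$, and reversing yields $\alpha^t=(\ell,1^{k-1})$. This is a bottom-aligned hook, matching the observation in Section \ref{ch:sym_bg} that $\lambda'=\lambda^t$ for such $\lambda$. Dually, for a top-aligned hook $\alpha=(1^{\ell-1},k)$ the same toggling gives $\alpha^c=(\ell,1^{k-1})$ and $\alpha^t=(1^{k-1},\ell)$, a top-aligned hook. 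Alternatively, the top-aligned case follows from the bottom-aligned case by applying reversal, since $(\alpha^r)^t=(\alpha^t)^r$ and reversal swaps the two kinds of hooks.

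The only real care needed is the degenerate cases. When $\ell=1$ we have $\alpha=(n)$ and $\alpha^t=(1^n)$. When the resulting $\ell$ or $k$ equals $1$, the image is $(1^n)$ or $(n)$. Each of these cases must be checked to still satisfy the hook definitions. The main step is the short gap-toggling computation of $\alpha^c$ for $\alpha=(k,1^{\ell-1})$.
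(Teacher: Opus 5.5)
Your proof is correct and takes essentially the same route as the paper: a direct computation of $\alpha^c$ and then $\alpha^t$ for a hook, with the other kind of hook obtained by reversal and the converse coming from $t$ being an involution. (The paper computes $(1^{\ell-1},k)^t=(1^{k-1},\ell)$ and says it is reversible.) Your explicit check of the degenerate compositions $(1^n)$ and $(n)$ is a bit more careful than the paper's one-line argument, but it is the same proof.
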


\begin{proof}
If $\alpha=(1,\dots,1,k)$, then $\alpha^c = (\ell(\alpha), 1^{k-1})$ \svw{so $\alpha^t = (1^{k-1}, \ell(\alpha))$, and this is reversible.} The result for top-aligned hooks follows by taking the reverse \svw{of $\alpha$.} 
\end{proof}

\section{\svw{Leading terms of variants of dual immaculate functions}}\label{ch:leading_terms}
Let $\alpha$ be a composition of size $n$ and let $f \in \{ \rho,\psi,\omega \}$ be an automorphism $\QSym \to \QSym$. In this section, we identify the leading term of the dual immaculate expansion of $f(\mathfrak{S}^*_{\alpha})$ with respect to the lexicographic order. In other words, we identify the composition $\beta^f_\alpha \vDash n$, such that $[\mathfrak{S}^*_{\beta^f_{\alpha}}]f(\mathfrak{S}^*_{\alpha}) > 0$ and $[\mathfrak{S}^*_{\beta}]f(\mathfrak{S}^*_{\alpha})=0$ for all $\beta >_{l} \beta^f_{\alpha}$, \svw{namely, $d_{\beta^f_{\alpha}} >0$ in}
\[
f(\mathfrak{S}^*_{\alpha}) = d_{\beta^f_{\alpha}} \dI^*_{\beta^f_{\alpha}} + \sum_{\substack{\beta \vDash n \\ \beta <_\ell \beta^f_{\alpha}}} d_{\beta}\mathfrak{S}^*_{\beta}.
\]

The following lemma states that $\beta^f_\alpha$ is also the index of the leading term in the monomial \svw{quasisymmetric} and fundamental expansions of $f(\dI_\alpha^*)$, hence reducing this to the question of \svw{semi-standard} and standard immaculate tableaux by Definition \ref{defn:dI_over_M} and Proposition \ref{prop:dI_over_F}.

\begin{lemma}\label{lem:leading_terms_of_F_M_dI}
Let \svw{$G \in \QSym$} be homogeneous of degree $n$. Then the index of the leading term (with respect to the lexicographic order) of the $\mathfrak{S}^*$-expansion of $G$ is equal to the index of the leading term of the $M$-expansion of $G$, as well as the $F$-expansion of $G$. As well, the coefficient of the leading term in the $\mathfrak{S}^*$-expansion of $G$, is equal to that of the $M$- and $F$-expansions.
\end{lemma}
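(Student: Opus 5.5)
The plan is to show that all three bases are unitriangular with respect to one another in lexicographic order. Once that is in place, leading indices and leading coefficients are forced to agree.

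\emph{Step 1: the $\mathfrak{S}^*$-to-$M$ change of basis is unitriangular.} By Definition \ref{defn:dI_over_M} and Proposition \ref{prop:prop315}, for each $\gamma \vDash n$,
\[
\mathfrak{S}^*_{\gamma} = M_{\gamma} + \sum_{\beta <_l \gamma} K_{\gamma,\beta} M_{\beta}.
\]

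\emph{Step 2: the $F$-to-$M$ change of basis is unitriangular.} By definition, $F_{\gamma} = \sum_{\beta \preceq \gamma} M_\beta$. So I need to check that $\beta \preceq \gamma$ with $\beta\neq\gamma$ implies $\beta <_l \gamma$. This holds because the parts of $\beta$ are obtained by splitting parts of $\gamma$. Let $i$ be the first index where $\beta_i \neq \gamma_i$. The first $i-1$ parts agree, so $\beta_i$ is the first piece of a split of $\gamma_i$, and hence $\beta_i < \gamma_i$. Therefore
\[
F_\gamma = M_\gamma + \sum_{\beta<_l\gamma} M_\beta.
\]

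\emph{Step 3: the general triangularity argument.} Take two bases $\{A_\gamma\}$ and $\{B_\gamma\}$ of $\QSym^n$ with $A_\gamma = B_\gamma + (\text{lex-lower } B\text{-terms})$. Write
\[
G = c\, A_{\gamma_0} + \sum_{\gamma <_l \gamma_0} c_\gamma A_\gamma \quad \text{with } c \neq 0.
\]
Substitute the $B$-expansion of each $A_\gamma$. Every term contributes only $B_\delta$ with $\delta \leq_l \gamma \leq_l \gamma_0$. The coefficient of $B_{\gamma_0}$ comes only from $A_{\gamma_0}$, and it equals $c$. So the leading index and leading coefficient are the same in both bases. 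Since lexicographic order is a total order, the leading term is well defined.

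\emph{Step 4: conclusion.} Apply Step 3 with $A=\mathfrak{S}^*$ and $B=M$, using Step 1, and with $A=F$ and $B=M$, using Step 2. This gives agreement of the $\mathfrak{S}^*$- and $F$-expansions with the $M$-expansion, and hence with each other. For $G=0$ the statement is vacuous.

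The only point requiring care is the small check in Step 2 that refinement implies lexicographic order. Beyond that, the argument is bookkeeping.
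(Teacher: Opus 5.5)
Your proposal is correct and follows the same route as the paper, whose one-line proof observes that the transition matrices from $\mathfrak{S}^*$ to $M$ and from $F$ to $M$ are unitriangular with respect to lexicographic order. You fill in what the paper leaves implicit: Proposition~\ref{prop:prop315} for the $\mathfrak{S}^*$ side, the check that refinement implies lexicographic order for the $F$ side, and the standard argument that unitriangularity preserves the leading index and coefficient.
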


\begin{proof}
The proof follows since the transition matrices between $M_\alpha$ and $\dI^*_\alpha$ and between $M_\alpha$ and $F_\alpha$ are upper-triangular with ones on the diagonal.
\end{proof}

\begin{example}
Note the leading term of the quasisymmetric function $F_{(1,3)}$ is indexed by $(1,3)$ in the monomial \svw{quasisymmetric,} fundamental, and dual immaculate expansions.
    \[
        F_{(1,3)}= \mathfrak{S}^*_{(1,3)} = M_{(1, 3)} + M_{(1, 2, 1)} + M_{(1, 1, 2)} +M_{(1, 1, 1, 1)}.
    \]
\end{example}

\svw{We expand the definition of $\beta^f_\alpha$ to include $f = \Id$, where $\Id$ is the identity map.} 
To identify each $\beta^f_{\alpha}$, we will construct a  \svw{semi-standard tableau} $T^{f,M}_{\alpha}$ whose content is exactly $\beta^f_{\alpha}$. Each $T^{f,M}_{\alpha}$ will be a variant of an immaculate \svw{tableau,} corresponding to the involution $f$. In other words, for each $f \in \{\Id, \rho, \psi, \omega \}$, $T^{f,M}_{\alpha}$ is the unique tableau such that 
\begin{align*}
T^{\Id ,M}_{\alpha} &\in \SSIT(\alpha,\beta^{\Id}_{\alpha}),\\
  T^{\rho ,M}_{\alpha} &\in \SSRIT(\alpha^r,\beta^{\rho}_{\alpha}),\\
  T^{\psi ,M}_{\alpha} &\in \RSSIT(\alpha,\beta^\psi_{\alpha}), \\ T^{\omega ,M}_{\alpha} &\in \RSSRIT(\alpha^r,\beta^\omega_{\alpha}).  
\end{align*}
One can verify that $T^{f,M}_\alpha$ is indeed unique using Lemma \ref{lem:leading_terms_of_F_M_dI} and Definition \ref{defn:dI_over_M}.

\begin{remark}
    \svw{It} is clear that $\beta^\Id_\alpha = \alpha$. Furthermore, from the proof of \cite[Proposition 3.15]{bergLiftSchurHallLittlewood2014}, $T^{\Id,M}_\alpha$ is such that row $i$ is filled with entries $i$.
\end{remark}

\begin{example}\label{ex:leading_term_Mtab_Id}
The unique semi-standard \svw{tableau} of shape $\alpha=(1,4,2,2)$ and content $\beta^\Id_\alpha = \alpha$, i.e. $T^{\Id,M}_{(1,4,2,2)}$.
\begin{center}
  \begin{ytableau}
    4 & 4 \\
    3 & 3\\
    2 & 2 & 2 & 2 \\ 
    1
\end{ytableau}
\end{center}
\vspace{0.1cm}
\end{example}

\subsection{Leading term for $f=\rho$}

\begin{proposition}\label{prop:rho_leading_term}
    Let $\alpha \vDash n$ have length $\ell$. Then the leading term in the $\mathfrak{S}^*$-expansion of $\rho(\mathfrak{S}^*_{\alpha})$ is indexed by $\beta^\rho_{\alpha}=(n-\ell+1,1^{\ell-1})$, and $[\mathfrak{S}^*_{\beta^\rho_{\alpha}}]\rho(\mathfrak{S}^*_{\alpha})=1$. As well, the unique semi-standard reverse \svw{tableau} of shape $\alpha^r$ with content $\beta^\rho_{\alpha}$ is $T^{\rho,M}_\alpha$, and is such that column 1 contains $\{ 1,2,\dots,\ell \}$ and every other cell is filled with ones. 
\end{proposition}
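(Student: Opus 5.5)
By \eqref{eq:dI_rho} we have $\rho(\mathfrak{S}^*_{\alpha}) = \BdI^*_{\alpha^r}$. Definition \ref{defn:dI_over_M} then gives
\[
\rho(\mathfrak{S}^*_{\alpha}) = \sum_{\beta \vDash n} |\SSRIT(\alpha^r,\beta)|\, M_\beta .
\]
By Lemma \ref{lem:leading_terms_of_F_M_dI}, it is enough to find the lexicographically largest $\beta$ for which $\SSRIT(\alpha^r,\beta)\neq\emptyset$, and to show that this $\SSRIT(\alpha^r,\beta)$ has exactly one element. The index and coefficient of the leading $\mathfrak{S}^*$-term then agree with those of this leading $M$-term. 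So the whole statement reduces to a combinatorial question about semi-standard reverse immaculate tableaux of shape $\alpha^r$, which has $\ell$ rows.

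\emph{Upper bound.} Let $T\in\SSRIT(\alpha^r,\beta)$. The first column has $\ell$ cells and is strictly increasing from bottom to top, so it holds $\ell$ distinct entries. At most one of them equals $1$, so at least $\ell-1$ cells of $T$ contain entries $\geq 2$. Therefore $\beta_1\leq n-\ell+1$.

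Now suppose $\beta_1=n-\ell+1$. Then every cell outside the first column equals $1$, and exactly one first-column cell equals $1$. That cell must be the bottom one, since the first column strictly increases. The other first-column entries are $\ell-1$ distinct integers, all at least $2$. Given $\beta_1=n-\ell+1$, the lexicographically largest remaining content puts one entry in each of $2,\dots,\ell$, consecutively. (For instance, the content $(n-\ell+1,0,1,\dots)$ is lexicographically smaller, because $0<1$ in position $2$.) Strict increase up the column then forces the column to read $1,2,\dots,\ell$ from bottom to top. Hence any $\beta\geq_l(n-\ell+1,1^{\ell-1})$ with $\SSRIT(\alpha^r,\beta)\neq\emptyset$ equals $(n-\ell+1,1^{\ell-1})$, and the tableau with that content is unique.

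\emph{Existence.} Take the filling described in the statement: the first column is $1,\dots,\ell$ from bottom to top, and every other cell contains $1$. We check it lies in $\SSRIT(\alpha^r,(n-\ell+1,1^{\ell-1}))$. The first column is strictly increasing from bottom to top. In row $i$, the entries read from right to left are $1,\dots,1,i$, which is weakly increasing. The content is $n-\ell+1$ ones and one copy each of $2,\dots,\ell$, as required. Thus $K^{\BdI^*}_{\alpha^r,\beta^\rho_\alpha}=1$, and this tableau is $T^{\rho,M}_\alpha$.

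The only delicate point is the lexicographic maximization after $\beta_1$ is fixed, i.e.\ ruling out contents with zeros in the interior of $\beta$. Two things settle it: compositions have only positive parts, and among fillings with $\beta_1=n-\ell+1$ every nonzero entry other than the ones sits in the first column. Everything else is immediate.
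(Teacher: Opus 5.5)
Your proposal is correct and follows essentially the paper's argument. You reduce via $\rho(\mathfrak{S}^*_\alpha)=\BdI^*_{\alpha^r}$ and Lemma~\ref{lem:leading_terms_of_F_M_dI} to the monomial expansion, bound $\beta_1\le n-\ell+1$ by strictness of the first column, and then use distinctness of the first-column entries to force $\beta=(n-\ell+1,1^{\ell-1})$ and the unique filling, with your explicit check that this filling lies in $\SSRIT(\alpha^r,\beta^\rho_\alpha)$ being a small step the paper leaves implicit.
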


\begin{example}\label{ex:leading_term_Mtab_rho}
The unique semi-standard reverse \svw{tableau} of shape $\alpha^r=(2,2,4,1)$ and content $\beta^\rho_\alpha = (6,1,1,1)$, i.e. $T^{\rho,M}_{(1,4,2,2)}$.
\begin{center}
       \begin{ytableau}
        4 \\
        3 & 1 & 1 & 1 \\ 
        2 & 1\\
        1 & 1 
     \end{ytableau}
\end{center}
\vspace{0.1cm}
\end{example}

\begin{proof}
By Proposition \ref{prop:variants_dI_as_images}, we have that 
\[
\rho(\mathfrak{S}^*_{\alpha})=\BdI^*_{\alpha^r}
\]
with $[M_{\beta}]\BdI^*_{\alpha^r}=|\SSRIT(\alpha^r,\beta)|$ for $\beta \vDash n$ by Definition \ref{defn:dI_over_M}. By Lemma \ref{lem:leading_terms_of_F_M_dI}, the leading term with respect to the dual immaculate expansion is equal to the leading term with respect to the monomial \svw{quasisymmetric} expansion, with equal coefficient.

Let $\beta^\rho_{\alpha}=(n-\ell+1,1^{\ell-1})$. 

Suppose $T \in \SSRIT(\alpha^r,\beta)$ with $\beta \geq_{l} \beta^\rho_{\alpha}$. For each entry $m \geq 1$, we will consider how many cells can contain $m$, versus how many copies of entry $m$ the tableau $T$ must have, as specified by the content $\beta$.

Since $\beta_{1} \geq (\beta^\rho_{\alpha})_1 = n-\ell+1$,  $T$ must have at least $n-\ell+1$ copies of entry $1$. On the other hand, since the first column of $T$ is strictly increasing from bottom to top, there is at most one copy of 1 in the first column, in cell $(1,1)$. There are $n-\ell$ cells outside of the first column. Thus, all these cells must have entry 1. In particular, if $\beta_{1}>n-\ell+1$, such a tableau does not exist, and so $\beta_{1}=n-\ell+1$. As well, $1 = T(1,1)$ and $1 = T(i,j)$ for all $i$ and all $j \geq 2$. 
\svw{From here,} if $\beta >_{l} \beta^\rho_{\alpha}$, then $\beta_{m}\geq 2$ for some $m \geq 2$, and so $T$ has at least 2 copies of entry $m$. However, the only remaining empty cells are in column 1 so $T$ has at least two copies of entry $m$ in the first column. But this contradicts that the first column is strictly increasing. Thus, it must be that $\beta=\beta^\rho_{\alpha}$. In this case, there is a unique way to distribute the entries $\geq 2$, since they must be strictly increasing from bottom to top within the first column. i.e. place entry $m$ into cell $(m,1)$ for $m \in \{2,\ldots, \ell \}$. 

This shows that $[M_{\beta^\rho_{\alpha}}]\rho(\mathfrak{S}^*_{\alpha})=1$ and  $[M_{\beta}]\rho(\mathfrak{S}^*_{\alpha})=0$ if $\beta >_{l} \beta^\rho_{\alpha}$. 
\end{proof}

\subsection{Leading term for $f=\psi$}

\begin{proposition}\label{prop:psi_leading_term}
Let $\alpha \vDash n$ have length $\ell$. Then the leading term in the $\mathfrak{S}^*$-expansion of $\psi(\mathfrak{S}^*_{\alpha})$ is indexed by $\beta^\psi_{\alpha}=(c_{i})_{i}$ where $c_{i}$ is the number of cells in the $i$-th column of the diagram of $\alpha$, and $[\mathfrak{S}^*_{\beta^\psi_{\alpha}}]\psi(\mathfrak{S}^*_{\alpha})=1$.
As well, the unique \svw{row-strict  semi-standard tableau} of shape $\alpha$ with content $\beta^\psi_{\alpha}$ is $T^{\psi,M}_\alpha$, and is such that column $i$ is filled  with entries $i$.
\end{proposition}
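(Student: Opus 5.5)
We follow the template of the proof of Proposition \ref{prop:rho_leading_term}. By Proposition \ref{prop:variants_dI_as_images}, $\psi(\mathfrak{S}^*_{\alpha}) = R\mathfrak{S}^*_{\alpha}$. By Definition \ref{defn:dI_over_M}, $[M_\beta]R\mathfrak{S}^*_{\alpha} = |\RSSIT(\alpha,\beta)|$ for every $\beta \vDash n$. Lemma \ref{lem:leading_terms_of_F_M_dI} then reduces the claim to a statement about monomial expansions: writing $\beta^\psi_\alpha=(c_1,c_2,\dots,c_{\alpha_{\max}})$, where $\alpha_{\max}=\max_i\alpha_i$ is the number of columns, it suffices to show three things. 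First, $\RSSIT(\alpha,\beta)=\emptyset$ for $\beta>_l\beta^\psi_\alpha$. Second, $|\RSSIT(\alpha,\beta^\psi_\alpha)|=1$. Third, the unique tableau in $\RSSIT(\alpha,\beta^\psi_\alpha)$ is the column filling.

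The key observation is that entries in a row-strict immaculate tableau are bounded by column position. Since rows strictly increase from left to right and all entries are at least $1$, induction along a row gives $T(i,j)\geq j$ for every cell. Hence an entry $m$ can only occupy cells in columns $1,\dots,m$.

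Now let $T\in\RSSIT(\alpha,\beta)$ with $\beta\geq_l\beta^\psi_\alpha$. We argue by induction on $m$ that $\beta_m=c_m$ and that every entry $m$ lies in column $m$, so that column $m$ is entirely filled with $m$.

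For $m=1$: by the bound, the entries $1$ lie only in column $1$, which has $c_1=\ell$ cells. Hence $\beta_1\leq c_1$, and combined with $\beta\geq_l\beta^\psi_\alpha$ this forces $\beta_1=c_1$. So column $1$ is all ones. This is consistent with the first column being weakly increasing.

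For the inductive step, suppose columns $1,\dots,m-1$ are exactly filled with the entries $1,\dots,m-1$ and $\beta_k=c_k$ for $k<m$. An entry $m$ lies in columns $1,\dots,m$, but columns $<m$ are already full, so it lies in column $m$. Moreover, a single row cannot hold two copies of $m$, although this is automatic here since the copies are confined to one column. Therefore $\beta_m\leq c_m$. Since the earlier parts agree and $\beta\geq_l\beta^\psi_\alpha$, we get $\beta_m=c_m$, and column $m$ is filled with $m$.

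After the last column every cell is used, so $\beta=\beta^\psi_\alpha$. The only point needing care is the lexicographic comparison, since the lengths of $\beta$ and $\beta^\psi_\alpha$ may differ. This is harmless: both have size $n$, so once the first $\alpha_{\max}$ parts agree the compositions are equal.

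Finally, the column filling (column $i$ filled with $i$) is indeed row-strict immaculate. Rows read $1,2,\dots,\alpha_i$, which is strictly increasing, and the first column is constantly $1$, which is weakly increasing. This gives existence and uniqueness, so the coefficient is $1$.

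The main obstacle is mild. It is stating the bound $T(i,j)\geq j$ cleanly and making sure the greedy induction correctly handles the lexicographic comparison. No step appears genuinely difficult.
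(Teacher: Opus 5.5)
Your proposal is correct and follows essentially the same route as the paper: reduce via Proposition~\ref{prop:variants_dI_as_images} and Lemma~\ref{lem:leading_terms_of_F_M_dI} to counting $\RSSIT(\alpha,\beta)$, then run a column-by-column induction using $\beta\geq_l\beta^\psi_\alpha$ to force $\beta_m=c_m$. Your bound $T(i,j)\geq j$ is just a cleaner way of stating the paper's observation that each copy of $k+1$ must sit in the leftmost empty cell of its row, with at most one copy per row.
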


\begin{example}\label{ex:leading_term_Mtab_psi}
    The unique \svw{row-strict  semi-standard tableau} of shape $\alpha=(1,4,2,2)$ and content $\beta^\psi_\alpha = (4,3,1,1)$, i.e. $T^{\psi,M}_{(1,4,2,2)}$.
\begin{center}
 \begin{ytableau}
    1 & 2 \\
    1 & 2\\
    1 & 2 & 3 & 4 \\ 
    1 
\end{ytableau}
\end{center}
\vspace{0.1cm}
\end{example}

\begin{proof}
By Proposition \ref{prop:variants_dI_as_images}, we have that 
\[
\psi(\mathfrak{S}^*_{\alpha})=R\mathfrak{S}^*_{\alpha}
\]
with $[M_{\beta}]R\mathfrak{S}^*_{\alpha}=|\RSSIT(\alpha,\beta)|$ for $\beta \vDash n$ by Definition \ref{defn:dI_over_M}. By Lemma \ref{lem:leading_terms_of_F_M_dI}, the leading term with respect to the dual immaculate expansion is equal to the leading term with respect to the monomial \svw{quasisymmetric} expansion, with equal coefficient.

Let $\beta^{\psi}_{\alpha}=(c_{i})_{i}$ where $c_{i}$ is the number of cells in the $i$-th column of the diagram of $\alpha$.

Suppose $T \in \RSSIT(\alpha,\beta)$ with $\beta \geq_{l} \beta^\psi_{\alpha}$. For each entry $m \geq 1$, we will consider how many cells can contain $m$, versus how many copies of entry $m$ the tableau $T$ must have, as specified by the content $\beta$.

Since $\beta_{1} \geq c_{1}$, $T$ must contain at least $c_{1}$ copies of entry 1. Since each row is strictly increasing left to right, each row can contain at most one copy of entry 1, which must be in cell $(i,1)$ for some $i$. In particular, if $\beta_{1}>c_{1}$ this is not possible as the number of entries is strictly greater than the length of the first column. Thus, it must be that $\beta_{1}=c_{1}$. Moreover, the positions of the entries $1$ are uniquely determined, in particular, the first column has to be filled with ones, and no other cell can contain a $1$. 

By induction, we can assume each of the first $k$ columns consist exactly of $c_{k}$ copies of entry $k$ and $\beta_{i}=(\beta^\psi_{\alpha})_{i}$ for all $i \leq k$. Similarly, since the rows are \svw{strictly} increasing left to right, all copies of entry $k+1$ must be leftmost among the empty cells in each row, and there is at most one copy of entry $k+1$ per row, since the rows are strictly increasing. \svw{Thus, if $\beta \geq_{l}\beta^\psi_{\alpha}$ then  $\beta_{k+1}\geq(\beta^\psi_{\alpha})_{k+1}$ but $\beta_{k+1}>(\beta^\psi_{\alpha})_{k+1}$ is not possible,  and so column $k+1$ must consist of exactly $c_{k+1}$ copies of entry $k+1$.}

This shows that $[M_{\beta^\psi_{\alpha}}]\psi(\mathfrak{S}^*_{\alpha})=1$ and  $[M_{\beta}]\psi(\mathfrak{S}^*_{\alpha})=0$ if $\beta >_{l} \beta^\psi_{\alpha}$. 
\end{proof}

\subsection{Leading term for $f=\omega$}

\begin{proposition}\label{prop:ome_leading_term}
    Let $\alpha \vDash n$ have length $\ell$. Then the leading term in the $\mathfrak{S}^*$-expansion of $\omega(\mathfrak{S}^*_{\alpha})$ is indexed by $\beta^\omega_{\alpha}$ where 
    \[
    (\beta^\omega_{\alpha})_{m}=\#\{ i \mid \alpha_{i}-1\geq m \} + \#\{ i \mid \max_{j \leq i} \{ \alpha^r_{j} \} =m \},
    \]
    and $[\mathfrak{S}^*_{\beta^\omega_{\alpha}}]\omega(\mathfrak{S}^*_{\alpha})=1$. 
As well, the unique \svw{row-strict semi-standard  reverse tableau} of shape \svw{$\alpha ^r$} with content $\beta^\omega_{\alpha}$ is $T^{\omega,M}_\alpha$ and is such that 
\[
\Row_{i}(T^{\omega,M}_{\alpha})=(\max_{j \leq i} \{ \alpha^r_{j} \} ,\alpha^r_{i}-1,\dots,2,1).
\]
\end{proposition}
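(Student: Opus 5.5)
We follow the template of Propositions \ref{prop:rho_leading_term} and \ref{prop:psi_leading_term}. By Proposition \ref{prop:variants_dI_as_images}, $\omega(\mathfrak{S}^*_{\alpha}) = R\BdI^*_{\alpha^r}$. By Definition \ref{defn:dI_over_M}, $[M_\beta]\omega(\mathfrak{S}^*_\alpha) = |\RSSRIT(\alpha^r,\beta)|$, and Lemma \ref{lem:leading_terms_of_F_M_dI} reduces the claim to a statement about monomial coefficients. It therefore suffices to show two things. First, $\RSSRIT(\alpha^r,\beta)$ is empty whenever $\beta >_l \beta^\omega_\alpha$. Second, $\RSSRIT(\alpha^r,\beta^\omega_\alpha)$ consists of exactly the tableau $T^{\omega,M}_\alpha$ described in the statement.

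Write $\gamma = \alpha^r$ and $M_i = \max_{j\le i}\gamma_j$. Let $T\in\RSSRIT(\gamma,\beta)$. Each row is strictly decreasing from left to right, and the first column is weakly increasing upward. The key observation is that the cell $(i,j)$ with $j\ge 2$ has entry at most $\gamma_i - j + 1$. Indeed, the entries to its right strictly decrease and must stay $\ge 1$, so $T(i,\gamma_i)\ge 1$ forces $T(i,j)\ge \gamma_i-j+1$. This is only a lower bound; to obtain large multiplicities of small entries we instead want the non-first-column cells as small as possible. The minimal choice is $T(i,j) = \gamma_i - j + 1$ for $j\ge2$, which fills row $i$ with $\gamma_i-1,\dots,1$. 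For the first column, $T(i,1)\ge \gamma_i$ is forced by strictness (when $\gamma_i\ge2$, $T(i,1)>T(i,2)\ge\gamma_i-1$, and trivially when $\gamma_i=1$). Weak increase up the column then forces $T(i,1)\ge T(i-1,1)\ge\cdots$, hence $T(i,1)\ge M_i$. These are exactly the entries of $T^{\omega,M}_\alpha$. Counting them, entry $m$ appears $\#\{i: \gamma_i-1\ge m\}$ times outside column 1 and $\#\{i : M_i = m\}$ times in column 1, which matches $\beta^\omega_\alpha$ (the first count is reversal-invariant).

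For the lexicographic maximality, we argue by induction on $m$, as in the $\psi$ case. Suppose $\beta\ge_l\beta^\omega_\alpha$ and that $\beta_k=(\beta^\omega_\alpha)_k$ for all $k<m$. We show two things: that $\beta_m\le(\beta^\omega_\alpha)_m$, and that equality forces the entries equal to $m$ into exactly the positions they occupy in $T^{\omega,M}_\alpha$. Since every cell satisfies $T(c)\ge T^{\omega,M}_\alpha(c)$ pointwise, the cells that can contain an entry $\le m$ form a subset of the cells whose $T^{\omega,M}_\alpha$-value is $\le m$. Hence
\[
\#\{c : T(c)\le m\} \le \#\{c : T^{\omega,M}_\alpha(c)\le m\} = \sum_{k\le m}(\beta^\omega_\alpha)_k .
\]
Combined with the induction hypothesis, this gives $\beta_m\le(\beta^\omega_\alpha)_m$. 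In the case of equality, the set of cells with value $\le m$ coincides for $T$ and $T^{\omega,M}_\alpha$, and the pointwise inequality then forces the values to agree on those cells. Iterating over $m$ shows that any $\beta\ge_l\beta^\omega_\alpha$ must equal $\beta^\omega_\alpha$, with $T=T^{\omega,M}_\alpha$. This yields coefficient $1$.

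The main obstacle is establishing the pointwise lower bound $T(c)\ge T^{\omega,M}_\alpha(c)$ cleanly, in particular the first-column bound $T(i,1)\ge M_i$. That bound requires combining row strictness, which gives $T(i,1)\ge\gamma_i$, with weak column increase, which propagates the maximum from the rows below. Once this domination lemma is in place, the counting argument is routine. We also have to check that $T^{\omega,M}_\alpha$ really lies in $\RSSRIT$. This holds because $M_i\ge\gamma_i>\gamma_i-1$ and $M_i$ is weakly increasing in $i$.
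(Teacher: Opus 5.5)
Your proposal is correct and follows essentially the same route as the paper, which also derives the lower bounds $T(i,j)\ge \alpha^r_i-(j-1)$ and $T(i,1)\ge\max_{j\le i}\alpha^r_j$ and then inducts on the entry value. Your cumulative count of cells with entries $\le m$ is a tidier packaging of the paper's cell-by-cell induction, and you also check explicitly that $T^{\omega,M}_\alpha$ is a valid row-strict reverse immaculate tableau, which the paper leaves implicit. One slip: in your \emph{key observation} you say $T(i,j)$ is at most $\gamma_i-j+1$, but what you mean and then prove is that it is at least $\gamma_i-j+1$.
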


\begin{example}\label{ex:leading_term_Mtab_ome}
    The unique \svw{row-strict semi-standard  reverse tableau} of shape $\alpha^r=(2,2,4,1)$ and content $\beta^\omega_\alpha = (3,3,1,2)$, i.e. $T^{\omega,M}_{(1,4,2,2)}$.
    \begin{center}
       \begin{ytableau}
    4 \\
    4 & 3 & 2 & 1 \\ 
    2 & 1\\
    2 & 1 
\end{ytableau}
    \end{center}
    \vspace{0.1cm}
\end{example}

\begin{proof}
By Proposition \ref{prop:variants_dI_as_images}, we have that 
\[
\omega(\mathfrak{S}^*_{\alpha})=R\BdI^*_{\alpha^r}
\]
with $[M_{\beta}]R\BdI^*_{\alpha^r}=|\RSSRIT(\alpha^r,\beta)|$ for $\beta \vDash n$ by Definition \ref{defn:dI_over_M}. By Lemma \ref{lem:leading_terms_of_F_M_dI}, the leading term with respect to the dual immaculate expansion is equal to the leading term with respect to the monomial \svw{quasisymmetric} expansion, with equal coefficient.

For $m \geq 1$, let $(\beta^\omega_{\alpha})_{m}=\#\{ i \mid \alpha_{i}-1\geq m \} + \#\{ i \mid \max_{j \leq i} \{ \alpha^r_{j}\} =m \}$. 

Suppose $T \in \RSSRIT(\alpha^r,\beta)$ with $\beta \geq_{l} \beta^\omega_{\alpha}$. For each entry $m \geq 1$, we will consider how many cells can \ME{contain} $m$, versus how many copies of entry $m$ the tableau $T$ must have, as specified by the content $\beta$.

Observe that since the rows are strictly increasing right to left, 
\begin{align}\label{eq:first_cond_ome_leading_term}
T(i,j) \geq \alpha^r_{i}-(j-1), \hspace{0.2cm} \text{for all } i,j.
\end{align}
In particular, $T(i,1) \geq \alpha^r_{i}$ for all $i$. Moreover, since the first column is weakly increasing from bottom to top, $T(i,1)\geq T(j,1)\geq\alpha^r_{j}$ for all $i>j$ and so
\begin{align}\label{eq:second_cond_ome_leading_term}
T(i,1)\geq \max_{j\leq i} \{ \alpha^r_{j}\} \hspace{0.2cm} \text{for all } i.
\end{align}
As well, by the \svw{row-strict}  increasing condition, each row can have at most one copy of each entry.

We consider how entry 1 can fill $T$. Since $\beta_1 \geq (\beta^\omega_\alpha)_1$, $T$ has at least $\svw{(\beta^\omega_\alpha)_1=}\#\{ i \mid \alpha_{i}-1\geq 1 \} + \#\{ i \mid \max_{j \leq i} \{ \alpha^r_{j} \} =1 \}$ copies of entry 1.
For $i$ with $\alpha^r_{i} \geq 2$, we can always have $1 = T(i,\alpha^r_{i})$ without affecting the first column. This contributes at most $\# \{ i \mid \alpha_{i}-1 \geq 1\}$ copies of the entry 1 to the content of $T$. 
For $i$ with $\alpha^r_{i}=1$, we need to make sure the first column is weakly increasing from bottom to top. 

By the necessary \svw{row and column conditions} identified above, 
$$
\{ i \mid T(i,1)=1 \} \subseteq \{ i \mid \max_{ j \leq i} \{ \alpha_{j}^r \} \leq 1 \}=\{ i \mid \max_{ j \leq i} \{ \alpha_{j}^r \} = 1 \}.
$$
Thus, there are at most $(\beta^\omega_\alpha)_1$ cells which could contain entry 1. In particular, if $\beta_1 > (\beta^\omega_\alpha)_1$ then such a tableau does not exist. It follows that $\beta_1 = (\beta^\omega_\alpha)_1$ and the positions of the entries 1 in $T$ are uniquely determined. In particular, $1 = T(i,\alpha^r_i)$ if $\alpha^r_i \geq 2$ and $1 = T(i,1)$ if \svw{$\max_{j \leq i} \{ \alpha^r_j \} =1$.} 

By induction, assume for $p \leq m$, $\beta_{p}=(\beta^\omega_{\alpha})_{p}$, and the \svw{entries} $p \leq m$ are assigned exactly to the cells $(i,\alpha^r_{i}-(p-1))$ for $i$ such that $\alpha^r_{i}\geq p+1$ and $(i,1)$ for $i$ such that $\max_{j \leq i} \{ \alpha^r_{j} \} =p$.

We now consider how the entry $m+1$ can fill $T$. Since $\beta_{m+1} \geq (\beta^\omega_\alpha)_{m+1}$, $T$ has at least $(\beta^\omega_{\alpha})_{m+1}=\#\{ i \mid \alpha_{i}-1\geq m+1 \} + \#\{ i \mid \max_{j \leq i} \{ \alpha^r_{j} \} =m+1 \}$ copies of entry $m+1$. 
For $i$ such that $\alpha^r_{i}\geq m+2$, there exists an empty cell in row $i$ that is not in column 1, and we can assign $m+1$ rightmost among the empty cells in this row, in particular, to cell $(i,\alpha^r_{i}-m)$. 
For $i$ with $\alpha^r_{i}\leq m+1$, we can fill cell $(i,1)$ with entry $m+1$ if this does not contradict that the first column is weakly increasing from bottom to top. By the earlier necessary \svw{row and column conditions,}
\[
\{ i \mid T(i,1)=m+1 \} \subseteq \{ i \mid \max_{ j \leq i} \{ \alpha_{j}^r \} \leq m+1 \}.
\]
Suppose for the sake of a contradiction that $i$ is such that $T(i,1)=m+1$ but $\max_{j \leq i} \{ \alpha^r_{j} \} < m+1$. By our inductive hypothesis, the cell $(i,1)$ has already been filled with \svw{an entry} $\leq m$. Thus, 
\[
\{ i \mid T(i,1)=m+1 \} \subseteq \{ i \mid \max_{ j \leq i} \{ \alpha_{j}^r \} = m+1 \}.
\]
It follows that 
\begin{align*}
    \{ (i,j) \mid T(i,j)=m+1 \} \subseteq \{ (i,\alpha^r_{i}-m) &\mid \alpha^r_{i}-1 \geq m+1 \} \\
    &\sqcup \{ (i,1) \mid \max_{j \leq i } \{ \alpha^r_{j} \} =m+1 \}.
\end{align*}
In particular, there are at most $(\beta^\omega_\alpha)_{m+1}$ cells which could contain entry $m+1$. If $\beta_{m
+1}>(\beta^\omega_\alpha)_{m+1}$ \svw{then} such a tableau does not exist. It follows that $\beta_{m+1} =(\beta^\omega_\alpha)_{m+1}$ and the positions of the entries $m+1$ are uniquely determined. In particular, $m+1$ is assigned to the cells $(i,\alpha^r_{i}-m)$ if $\alpha^r_{i}-1\geq m+1$ and $(i,1)$ if $\max_{j \leq i} \{ \alpha^r_{j} \}=m+1$.

Note the proof constructs $T^{\omega,M}_{\alpha}$ such that for all $i$,
\[
\Row_{i}(T^{\omega,M}_{\alpha})=(\max_{j \leq i} \{ \alpha^r_{j} \},\alpha^r_{i}-1,\dots,2,1).
\]

This shows that $[M_{\beta^\omega_{\alpha}}] \omega(\mathfrak{S}^*_{\alpha})=1$ and  $[M_{\beta}]\omega(\mathfrak{S}^*_{\alpha})=0$ if $\beta >_{l} \beta^\omega_{\alpha}$.
\end{proof}

\begin{remark}
    Equivalently, $T^{\omega,M}_{\alpha}$ is the unique \svw{tableau} that meets the necessary conditions (\ref{eq:first_cond_ome_leading_term}) and (\ref{eq:second_cond_ome_leading_term}) with equality. 
\end{remark}

Next, we prove some facts about $\beta^\omega_\alpha$ which will be useful to us in the proof of the main theorem in Section \ref{ch:dI_images}. 

\begin{lemma}\label{lemma:explicit_beta_if_alpha_divboard}
Let $\alpha \vDash n$ be a diving-board with length $\ell$. Assume $\alpha \neq (1^n)$ and the long row has length \ME{$k \geq 2$.} Then $\ell(\beta^\omega_{\alpha})=k$. As well, $\beta^\omega_{\alpha}$ is a diving-board if and only if $\alpha$ is a bottom- or top-aligned hook.
\end{lemma}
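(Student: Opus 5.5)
We will work directly from the explicit formula in Proposition \ref{prop:ome_leading_term},
\[
(\beta^\omega_{\alpha})_{m}=\#\{ i \mid \alpha_{i}-1\geq m \} + \#\{ i \mid \max_{j \leq i} \{ \alpha^r_{j} \} =m \},
\]
and compute it completely for a diving-board. Write $\alpha^r=(1^{a},k,1^{b})$ with $a+b+1=\ell$ and $k\geq 2$. Here $a=0$ corresponds to $\alpha$ being a top-aligned hook, since then the long row of $\alpha$ is its last row, and $b=0$ corresponds to a bottom-aligned hook. The first summand is $1$ for $1\leq m\leq k-1$, coming from the long row, and $0$ for $m\geq k$. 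For the second summand, the prefix maximum $\max_{j\leq i}\{\alpha^r_j\}$ equals $1$ for $i\leq a$ and equals $k$ for $i\geq a+1$. So the second summand contributes $a$ at $m=1$, $b+1$ at $m=k$, and $0$ elsewhere.

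Combining the two summands gives
\[
\beta^\omega_\alpha=(1+a,\,1^{k-2},\,b+1),
\]
where the middle block $1^{k-2}$ covers $m=2,\dots,k-1$. When $k=2$ this reads $(1+a,b+1)$, which is consistent. All entries are positive and the last nonzero index is $m=k$, so $\ell(\beta^\omega_\alpha)=k$. As a sanity check, the sizes agree: $(1+a)+(k-2)+(b+1)=k+a+b=n$. I would also note that the first and last parts never merge, since $k\geq 2$ makes them distinct positions.

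For the diving-board claim, we need to decide when at most one part of $(1+a,1^{k-2},b+1)$ is at least $2$. The parts $1^{k-2}$ always equal $1$. So $\beta^\omega_\alpha$ is a diving-board if and only if not both $1+a\geq 2$ and $b+1\geq 2$, that is, if and only if $a=0$ or $b=0$. By the identification above, this holds exactly when $\alpha$ is a top- or bottom-aligned hook.

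The only delicate point is the bookkeeping between $\alpha$ and $\alpha^r$. The formula's first summand uses $\alpha$, but only the multiset of parts matters there. The second summand uses prefix maxima of $\alpha^r$, so I must place $a$ and $b$ correctly; I would double-check this against Example \ref{ex:leading_term_Mtab_ome} or a small case such as $\alpha^r=(1,3,1)$, where the formula gives $(2,1,2)$.
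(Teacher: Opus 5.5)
Your proof is correct and is essentially the paper's argument. The paper reads off the content of $T^{\omega,M}_\alpha$ row by row (rows $(1)$, then $(k,k-1,\dots,1)$, then $(k)$) to get $\beta^\omega_\alpha=(\ell-i+1,1^{k-2},i)$. This is exactly your $(1+a,1^{k-2},b+1)$ with $a=\ell-i$ and $b=i-1$, and the paper then deduces the length and the diving-board criterion just as you do; evaluating the formula entry by entry rather than the tableau row by row is only a cosmetic difference.
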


\begin{example}
    \svw{We calculate} $T^{\omega, M}_\alpha$ and $\beta^\omega_\alpha$ when $\alpha$ is a \svw{bottom-aligned} hook, a diving-board that is not a top- or bottom-aligned hook, as well as a composition that is not a diving-board. From left to right, \svw{$T^{\omega,M}_{(4,1,1)}$,} $T^{\omega,M}_{(1,4,1)}$, \svw{$T^{\omega,M}_{(2,3,1)}$.} 
    \begin{center}
         \begin{ytableau}
        4 & 3 & 2 & 1 \\ 
        1 \\
        1
    \end{ytableau} \hspace{0.8cm}
    \begin{ytableau}
        4 \\ 
        4 & 3 & 2 & 1 \\
        1 
    \end{ytableau} \hspace{0.8cm}
    \begin{ytableau}
        3 & 1 \\ 
        3 & 2 & 1 \\
        1
    \end{ytableau}
    \end{center}
    \vspace{0.1cm}
    Note that \svw{$\beta^\omega_{(4,1,1)} = (3,1,1,1), \, 
        \beta^\omega_{(1,4,1)} = (2,1,1,2), \,
        \beta^\omega_{(2,3,1)} = (3,1,2)$.}
\end{example}

\begin{proof}
Assume $\alpha$ is a \svw{diving-board} such that the long row with length \ME{$k \geq 2$} occurs at index $1 \leq i \leq \ell$. 

Given $\alpha$, we compute the explicit form of $\beta_{\alpha}^\omega$ using the fact that $\beta_{\alpha}^\omega$ is the content of $T_{\alpha}^{\omega,M}$, from Proposition \ref{prop:ome_leading_term}. 
Recall, the tableau $T^{\omega,M}_{\alpha}$ is a \svw{row-strict} semi-standard reverse immaculate \svw{tableau} with shape $\alpha^r$ and is defined by 
\[
\Row_{p}(T^{\omega,M}_{\alpha})=(\max_{j \leq p} \{ \alpha^r_{j} \},\alpha^r_{p}-1,\dots,2,1)
\]
for each $1 \leq p \leq \ell$.

Note that $\alpha^r$ is a diving-board such that the long row of length $k$ occurs at index $\ell-i+1$. 

Then, for $1 \leq p < \ell-i+1$, $\alpha^r_{p}=1$, and so $\Row_{p}(T^{\omega,M}_{\alpha})=(1)$. 
As well, $\Row_{\ell-i+1}(T^{\omega,M}_{\alpha})=(k,k-1,\dots,2,1)$.
Finally, for $\ell-i+1 < p \leq \ell$, $\Row_{p}(T^{\omega,M}_{\alpha})=(k)$. 

Thus, $\beta^{\omega}_{\alpha} = \content(T^{\omega,M}_{\alpha})=(\ell-i+1, 1^{k-2}, i)$ and in particular $\ell(\beta^\omega_{\alpha})=k$.
Note that $\beta^\omega_{\alpha}$ is a diving-board if and only if $\ell-i+1=1$ or $i=1$. However, this happens if and only if $i=1$ or $i=\ell$, ie. exactly when $\alpha$ is a bottom- or top-aligned hook.
\end{proof}

\begin{lemma}\label{lemma:characterize_alpha_divboard_using_beta}
    Let $\alpha \vDash n$ with $\alpha \neq (1^n)$. Then $\alpha$ is a diving-board if and only if $\ell(\alpha)+\ell(\beta^\omega_{\alpha})>n$. In particular, if $\alpha$ is a diving-board then $\ell(\alpha)+\ell(\beta^\omega_{\alpha})=n+1$. 
\end{lemma}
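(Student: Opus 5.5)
The plan is to compute $\ell(\beta^\omega_{\alpha})$ explicitly for an arbitrary $\alpha$ from the tableau $T^{\omega,M}_{\alpha}$ of Proposition \ref{prop:ome_leading_term}, and then reduce the statement to an elementary inequality on the parts of $\alpha$.

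\textbf{Step 1: $\ell(\beta^\omega_\alpha)=K$.} Let $K=\max_i \alpha_i$. I claim $\ell(\beta^\omega_{\alpha})=K$. By Proposition \ref{prop:ome_leading_term}, $\beta^\omega_\alpha=\content(T^{\omega,M}_\alpha)$, where
\[
\Row_{p}(T^{\omega,M}_{\alpha})=(\max_{j \leq p} \{ \alpha^r_{j} \},\alpha^r_{p}-1,\dots,2,1).
\]
Every entry of this tableau is at most $K$, since $\max_{j\le p}\{\alpha^r_j\}\le K$ and $\alpha^r_p-1<K$. Conversely, let $p$ be an index with $\alpha^r_p=K$. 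Then row $p$ equals $(K,K-1,\dots,1)$, so every value $1,\dots,K$ occurs. Hence $\beta^\omega_\alpha$ has exactly $K$ positive parts, i.e. $\ell(\beta^\omega_\alpha)=K$. Equivalently, from the displayed formula for $(\beta^\omega_\alpha)_m$: the second summand is positive for $m=K$, both summands vanish for $m>K$, and for $m<K$ the first summand counts the row of length $K\ge m+1$. For diving-boards this agrees with Lemma \ref{lemma:explicit_beta_if_alpha_divboard}.

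\textbf{Step 2: reduce to an inequality on $\alpha$.} The statement becomes: for $\alpha\neq(1^n)$, $\alpha$ is a diving-board if and only if $\ell(\alpha)+K>n$, and in that case $\ell(\alpha)+K=n+1$. Write $n=\sum_i\alpha_i$ and single out one part equal to $K$. Each of the remaining $\ell(\alpha)-1$ parts is at least $1$, so
\[
n \;\geq\; K+(\ell(\alpha)-1),
\]
that is, $\ell(\alpha)+K\le n+1$. Equality holds exactly when every part other than the chosen maximal one equals $1$. Since $\alpha\neq(1^n)$ forces $K\ge2$, this says precisely that $\alpha$ has a unique row of length at least $2$, i.e. that $\alpha$ is a diving-board.

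\textbf{Step 3: conclude.} Because $\ell(\alpha)+K\le n+1$ always holds, the condition $\ell(\alpha)+K>n$ is equivalent to $\ell(\alpha)+K=n+1$. By Step 2 this is equivalent to $\alpha$ being a diving-board, which gives both assertions of the lemma.

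The only step requiring any care is Step 1, namely checking that no value between $1$ and $K$ is skipped, so that the content has no internal zero part. This is handled by exhibiting the row $p$ with $\alpha^r_p=K$, which realises every value from $1$ to $K$.
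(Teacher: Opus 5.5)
Your proof is correct, but it is organized differently from the paper's. You prove the general identity $\ell(\beta^\omega_\alpha)=\max\alpha$ for every $\alpha$:
\begin{itemize}
\item the upper bound holds because every entry of $T^{\omega,M}_\alpha$ is at most $\max\alpha$;
\item the lower bound, and the absence of gaps, come from the row $(K,K-1,\dots,1)$.
\end{itemize}
After that, the lemma is just the counting fact $\ell(\alpha)+\max\alpha\le n+1$, with equality exactly for diving-boards. Both directions and the ``in particular'' clause follow at once. As a bonus you get $\ell(\alpha)+\ell(\beta^\omega_\alpha)\le n+1$ for all $\alpha$.

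The paper instead handles the two directions separately. For the forward direction it quotes $\ell(\beta^\omega_\alpha)=k$ from the explicit diving-board computation in Lemma \ref{lemma:explicit_beta_if_alpha_divboard}. The converse uses only the upper bound: the largest entry $\ell(\beta^\omega_\alpha)$ of $T^{\omega,M}_\alpha$ lies in column 1, so it is a prefix maximum of $\alpha^r$. Hence some $\alpha_i>n-\ell(\alpha)$, and the same cell count forces a diving-board.

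Your route is more self-contained and yields a formula that parallels $\ell(\beta^\psi_\alpha)=\max\alpha$, which the paper uses in the $\psi$ case of Theorem \ref{thm:main}. Your explicit check that no value in $\{1,\dots,K\}$ is skipped also spells out a detail the paper leaves implicit when it says the distinct entries are $\{1,\dots,\ell(\beta)\}$. The paper's route costs it nothing extra, since Lemma \ref{lemma:explicit_beta_if_alpha_divboard} is needed anyway in Case 2 of the $\omega$ part of Theorem \ref{thm:main}. There it must know the full shape of $\beta^\omega_\alpha$, not just its length, to conclude that $\beta^\omega_\alpha$ is not a diving-board.
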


\begin{proof}
Denote $\beta = \beta^\omega_{\alpha}$. 
    Suppose $\alpha$ is a diving-board, where the long row of length \ME{$k \geq 2$} occurs at index $1 \leq i \leq \ell(\alpha)$. By Lemma \ref{lemma:explicit_beta_if_alpha_divboard}, $\ell(\beta)=k$. Thus, $\ell(\alpha)+\ell(\beta) = \ell(\alpha)+k = n+1$. 

Suppose on the other hand that $\ell(\alpha)+\ell(\beta)>n$. Recall that $\beta = \content(T^{\omega,M}_{\alpha})$ as in Proposition \ref{prop:ome_leading_term}. Thus, $\{ 1,2,\dots,\ell(\beta) \}$ is the set of distinct entries in $T^{\omega,M}_{\alpha}$. Recall, by Proposition \ref{prop:ome_leading_term}, for each $1 \leq i \leq \ell(\alpha)$, 
\[
\Row_{i}(T^{\omega,M}_{\alpha})=(\max_{j \leq i} \{ \alpha^r_{j}\},\alpha^r-1,\dots,2,1).
\]
Since $\ell(\beta)>n-\ell(\alpha)$, and $\ell(\beta)$ is an entry of $T$ in column 1, \svw{because rows increase from \ME{right to left}} by construction 
there exists some $i$ such that $\max_{j \leq i} \{ \alpha^r_{j} \} > n-\ell(\alpha)$. Hence there exists some $i$ such that $\alpha_{i}>n-\ell(\alpha)$. Since the first column of $\alpha$ consists of $\ell(\alpha)$ cells, and $\alpha$ has $n$ cells, it must be that $\alpha$ is a diving-board with $\alpha_{i}=n-\ell(\alpha)+1$. 
\end{proof}

\section{Classification of dual immaculate functions in the fundamental basis}\label{ch:dI_in_F_classification}
Next, we will consider the question of when the $F$-expansion of $\mathfrak{S}^*_{\alpha}$ is `simplest'. In other words, we will characterize $\alpha$ such that the expansion consists of exactly one term. This result will not only be useful in later sections, but also the main result of this work considers the analogue of this question for the $\dI^*$-expansion of variants of dual immaculate functions.

We begin with an explicit computation of the dual immaculate functions indexed by \svw{diving-boards.} 

\begin{proposition}\label{prop:diving_board_explicit_comp}
Let $\alpha \vDash n$ be \ME{$(1^n)$ or a }\svw{diving-board} with the long row of length $k$ at index $i$, i.e. $\alpha = (1^{i-1},k,1^{\ell-i})$ \ME{for $k \geq 1$}. Then
\[
\mathfrak{S}^*_{\alpha} = \sum_{S} F_{\Comp([n-1] \setminus (S-1))}
\]
where the sum is over all $S \subseteq \{ i+1,\dots,n \}$ with $|S| = k-1$. In particular, there are ${n-i \choose k-1}$ terms in the $F$-expansion of $\mathfrak{S}^*_{\alpha}$. 
\end{proposition}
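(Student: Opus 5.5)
We use the fundamental expansion of Proposition \ref{prop:dI_over_F}, namely $\mathfrak{S}^*_{\alpha} = \sum_{T \in \SIT(\alpha)} F_{\Des_{\mathfrak{S}^*}(T)}$. The proof has two parts: a bijection between $\SIT(\alpha)$ and the sets $S$ in the statement, and a computation of each tableau's descent set.

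First we describe $\SIT(\alpha)$ for $\alpha = (1^{i-1},k,1^{\ell-i})$. The first column has $\ell$ cells and must be strictly increasing from bottom to top. Row $i$ has $k-1$ further cells, in columns $2,\dots,k$, and its entries must increase from left to right. So a standard immaculate tableau is determined by the set $S$ of entries placed in cells $(i,2),\dots,(i,k)$, with $|S|=k-1$. The remaining $n-k+1=\ell$ entries fill the first column in increasing order, and these are forced.

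The only constraint left is that the row is increasing, that is, $T(i,1) < \min S$. The entry $T(i,1)$ is the $i$-th smallest element of $[n]\setminus S$. We claim the condition holds if and only if $S \subseteq \{i+1,\dots,n\}$.
\begin{itemize}
\item If $\min S \le i$, then fewer than $i$ elements of $[n]\setminus S$ lie below $\min S$. Hence the $i$-th smallest element of the complement exceeds $\min S$, and the condition fails.
\item If $S\subseteq\{i+1,\dots,n\}$, then $\{1,\dots,i\}\subseteq [n]\setminus S$. So $T(i,1)=i<\min S$, and the condition holds.
\end{itemize}
This gives a bijection between $\SIT(\alpha)$ and the $(k-1)$-subsets of $\{i+1,\dots,n\}$, which proves the count $\binom{n-i}{k-1}$. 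It also shows that in every such tableau the entries $1,\dots,i$ occupy cells $(1,1),\dots,(i,1)$.

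Next we compute $\Des_{\mathfrak{S}^*}(T)$, the set of $m$ with $m+1$ strictly above $m$. The claim is $\Des_{\mathfrak{S}^*}(T)=[n-1]\setminus(S-1)$, equivalently $m\notin \Des$ exactly when $m+1\in S$.
\begin{itemize}
\item If $m+1\in S$, then $m+1$ lies in row $i$. Every entry in rows below $i$ is at most $i-1<m+1$, so $m$ lies in row $i$ or above. Thus $m+1$ is weakly below $m$, and $m$ is not a descent.
\item If $m+1\notin S$, then $m+1$ lies in the first column. Either $m$ is also in the first column, and then $m+1$ is strictly above it by the column condition, or $m\in S$. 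In the latter case $m$ is in row $i$, and $m+1 > m > i$ means $m+1$ is not among $1,\dots,i$, so $m+1$ sits in some row $>i$. Either way $m+1$ is strictly above $m$, and $m$ is a descent.
\end{itemize}
Summing $F_{\Comp(\Des_{\mathfrak{S}^*}(T))}$ over the bijection gives the stated formula. The case $\alpha=(1^n)$ is the case $k=1$: $S=\emptyset$, and the formula gives $F_{(1^n)}$.

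The step needing the most care is the case $m\in S$, $m+1\notin S$ of the descent computation. It relies on the observation from the bijection that $1,\dots,i$ fill the bottom $i$ cells of the first column, so every first-column entry greater than $i$ lies strictly above row $i$. With that fact established first, the rest follows routinely.
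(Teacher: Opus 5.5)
Your proof is correct and is essentially the paper's. Both arguments parametrize $\SIT(\alpha)$ by the $(k-1)$-set $S$ filling row $i$, and then check that $m \notin \Des_{\mathfrak{S}^*}(T)$ exactly when $m+1 \in S$. The only real difference is that you use the fact that $1,\dots,i$ occupy the bottom $i$ cells of the first column, where the paper uses a cell-counting argument to get $R_m \ge i$.

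One small slip: in the case $m+1 \in S$, the inequality you need is $m \ge i > i-1$, which follows from $m+1 \ge i+1$. The inequality $i-1 < m+1$ that you wrote is not enough on its own.
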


\begin{example}
    We have $\mathfrak{S}^*_{(3,1,1)}= F_{(3,1,1)} + F_{(2,2,1)} + F_{(2,1,2)} + F_{(1,3,1)} + F_{(1,2,2)} + F_{(1,1,3)} $, from the following standard immaculate tableaux. As in Proposition \ref{prop:diving_board_explicit_comp}, $n=5$, $k=3$, $i=1$, and ${n-i \choose k-1} = {4 \choose 2} = 6$.
    \center{
            {\small
    \begin{ytableau}
        5 \\
        4 \\ 
        1 & 2 & 3
    \end{ytableau}} \hspace{0.2 cm}
    {\small
    \begin{ytableau}
        5 \\
        3 \\ 
        1 & 2 & 4
    \end{ytableau}} \hspace{0.2cm}
    \begin{ytableau}
        4 \\
        3 \\ 
        1 & 2 & 5 
    \end{ytableau} \hspace{0.2cm}
    {\small \begin{ytableau}
        5 \\
        2 \\ 
        1 & 3 & 4
    \end{ytableau}} \hspace{0.2cm}
    {\small \begin{ytableau}
        4 \\
        2 \\ 
        1 & 3 & 5
    \end{ytableau}} \hspace{0.2cm}
    {\small \begin{ytableau}
        3 \\
        2 \\ 
        1 & 4 & 5
    \end{ytableau} }
    }
    \vspace{0.1cm}
\end{example}

\begin{proof}
    Recall that by Proposition \ref{prop:dI_over_F},
    \[
    \mathfrak{S}^*_\alpha = \sum_{T \in \SIT(\alpha)} F_{\Des_{\mathfrak{S}^*}(T)},
    \] where the sum runs over all standard immaculate \svw{tableaux} $T$ of shape $\alpha$. Using the correspondence between compositions of $n$ and subsets of $[n-1]$, we can assume each index $\Des_{\mathfrak{S}^*}(T)$ is a subset of $[n-1]$. \svwrev{The result for $k=1$ is a straightforward calculation.}

    To prove the result \svwrev{for $k\geq 2$,} we investigate which \ME{tableaux} belong to $\SIT(\alpha)$.

    For each subset $S \subseteq \{ i+1,\dots,n \}$, $|S| = k-1$, define $T_S$ to be the tableau of shape $\alpha$ such that the entries from $S$ fill cells $(i,2),(i,3),\ldots,(i,k)$,
    such that the row is increasing left to right, while the entries in $[n] \setminus S$ fill column 1, such that it is increasing from bottom to top.

    \begin{example}
       \ME{Let $\alpha=(1,4,1) \vDash 6$. Then $T_S$ of shape $\alpha$ for $S=\{3,5,6 \}$ is as follows.}      \begin{center}
       \begin{ytableau}
        4 \\
        2 & 3 & 5 & 6 \\ 
        1 \\
    \end{ytableau}
        \end{center}\svw{Note the $\mathfrak{S}^*$-descent} set is $\{1,3 \}$.
        \vspace{0.1cm}
    \end{example}

    We show that $\SIT(\alpha) = \{T_S \mid S \subseteq \{ i+1,\dots,n \},\, |S| = k-1 \}$.
    
    Let $T \in \SIT(\alpha)$ and let $S$ denote the set of entries \svw{of $T$} in cells $(i,2),(i,3),\ldots,(i,k)$. Then, $|S|=k-1$. Since the first column is increasing from bottom to top, $T(i,1) \geq i$. Since row $i$ is increasing left to right, $S\subseteq \{ i+1,\dots,n \}$. Thus, $\svw{T} \in \{T_S \mid S \subseteq \{ i+1,\dots,n \},\, |S| = k-1 \}$.

    On the other hand, consider $T_S$ for $S \subseteq \{ i+1,\dots,n \},\, |S| = k-1$. To see that $T_S$ is in fact an immaculate tableau, it remains to verify $T_S(i,1) \leq T_S(i,2)$. 
    Since the entry $T_S(i,1)$ is in $[n] \setminus S$, we have that $T_S(i,1) \leq i$. Since $T_S(i,2) \in S$, we have that $T_S(i,2) \geq i+1$ and so the $i$-th row of $T_S$ is increasing left to right. It follows by definition that $T_S \in \SIT(\alpha)$. 

    Thus,
    \[
    \mathfrak{S}^*_\alpha = \sum_S F_{\Des_{\mathfrak{S}^*}(T_S)}
    \]
    where the sum runs over all $S \subseteq \{ i+1,\dots,n \},\, |S| = k-1$.

    It remains to show that $\Des_{\mathfrak{S}^*}(T_S)=[n-1]\setminus (S-1)$. In other words, for $m \in [n-1]$, $m \in \Des_{\mathfrak{S}^*} (T_{S})$ if and only if $m+1 \not\in S$.

    Suppose that $m+1 \in S$. Since $S \subseteq \{ i+1,\dots,n \}$, we have $m \geq i$. 
Suppose for the sake of a contradiction that the row index of $m$ is $R_m < i$. Then, there are at least $n-(i-1)$ cells above and to the right of $m$, and each must have an entry that is strictly greater than $m$ since the rows and columns are increasing. However, there are only $n-m < n-(i-1)$ entries strictly greater than $m$, and so \svw{we have a contradiction and hence} $R_m \geq i$. But, $R_{m+1}=i$ by definition of $T_S$, since $m+1 \in S$. Thus, $R_m \geq i = R_{m+1}$ and so $m \not\in \Des_{\mathfrak{S}^*} (T_{S})$.

On the other hand, suppose $m+1 \not\in S$.

If $R_m \neq i$ then $m$ is contained in the first column of \svw{$T_S$.} Since $m+1 \notin S$, $m+1$ is also in the first column of \svw{$T_S$.} Since the first column is increasing from bottom to top, $R_m < R_{m+1}$ and so $m \in \Des_{\mathfrak{S}^*} (T_S)$.

Suppose $R_m = i$ and $R_{m+1} \leq i$ for the sake of a contradiction. Then, the entry $m+1$ is in the first column of \svw{$T_S$ because $m+1\not\in S$.} Since the first column is increasing from bottom to top, and the $i$-th row is increasing from left to right, $m+1 \leq \svw{T_S(i,1)} \leq m$, a contradiction \svw{so $R_{m+1}>i$.} Thus, $R_m < R_{m+1}$ and so $m \in \Des_{\mathfrak{S}^*} (T_S)$.
\end{proof}

Next, we answer the question of when the fundamental expansion of $\mathfrak{S}^*_{\alpha}$ consists of exactly one term.

\begin{proposition}\label{prop:ex_1_descent}
Let $\alpha \vDash n$.
    Then $\mathfrak{S}^*_{\alpha} = F_{\beta}$ \ME{\svw{for some $\beta$,}} if and only if $\alpha$ is a top-aligned hook. In this case, $\mathfrak{S}^*_{\alpha} = F_{\alpha}$.
\end{proposition}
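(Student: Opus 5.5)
The statement is that $\mathfrak{S}^*_\alpha$ is a single fundamental function exactly for top-aligned hooks. Since $\mathfrak{S}^*_\alpha=\sum_{T\in\SIT(\alpha)}F_{\Des_{\mathfrak{S}^*}(T)}$ by Proposition \ref{prop:dI_over_F}, and every term has coefficient one, $\mathfrak{S}^*_\alpha=F_\beta$ for some $\beta$ holds if and only if $|\SIT(\alpha)|=1$. Moreover, by Lemma \ref{lem:leading_terms_of_F_M_dI} and Proposition \ref{prop:prop315}, the leading term of the $F$-expansion is $F_\alpha$. Hence, whenever the expansion has a single term, that term is $F_\alpha$. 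The proof therefore reduces to showing that $|\SIT(\alpha)|=1$ if and only if $\alpha$ is a top-aligned hook.

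For the "if" direction, let $\alpha=(1^{\ell-1},k)$, so the long row sits at index $i=\ell$, or let $\alpha=(1^n)$. Proposition \ref{prop:diving_board_explicit_comp} gives exactly $\binom{n-\ell}{k-1}$ terms. Since $n=\ell-1+k$, this count is $\binom{k-1}{k-1}=1$, which covers the case $k=1$ as well. It follows that $\mathfrak{S}^*_\alpha=F_\alpha$.

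For the "only if" direction, assume $\alpha$ is not a top-aligned hook. Then some row $i<\ell$ has $\alpha_i\ge 2$. I will exhibit two distinct standard immaculate tableaux.

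\begin{enumerate}
\item The first is the row-reading tableau $T_1$: fill rows $1,2,\dots,\ell$ in order, left to right, with consecutive integers. It is immaculate because rows increase and the first column increases.
\item The second tableau $T_2$ is obtained by modifying the filling at row $i$ and row $i+1$. Fill rows $1,\dots,i-1$ as in $T_1$. Then put the next integer $a$ in cell $(i,1)$ and $a+1$ in cell $(i+1,1)$. Fill the remaining cells of rows $i$ and $i+1$ with the next integers, row $i$ first and then row $i+1$. Finally, fill the higher rows as before.
\end{enumerate}

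Checking $T_2$: row $i$ reads $a,a+2,\dots$, which is increasing. Row $i+1$ starts with $a+1$, followed by larger numbers, so it is increasing. The first column is still increasing. The tableaux differ because $T_1(i,2)=a+1$ while $T_2(i,2)=a+2$; here we use $\alpha_i\ge 2$. This gives $|\SIT(\alpha)|\ge 2$.

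There is no real obstacle in this argument. The only delicate point is verifying that the modified tableau $T_2$ remains immaculate when row $i+1$ has length one, and it does, since in that case its single entry is $a+1$.
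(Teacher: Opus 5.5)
Your proof is correct, and it is organized differently from the paper's.

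The paper splits into two cases. For diving-boards it reads off the number $\binom{n-i}{k-1}$ of terms from Proposition \ref{prop:diving_board_explicit_comp}, which settles both directions at once. For non-diving-boards it takes two rows $i<j$ of length at least $2$, places $n-1,n$ at their right ends, and swaps them. This gives two tableaux whose $\mathfrak{S}^*$-descent sets differ at $n-1$.

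You instead first reduce the statement to $|\SIT(\alpha)|=1$. This works because the $F$-coefficients of $\mathfrak{S}^*_\alpha$ are nonnegative integers summing to $|\SIT(\alpha)|$, and $\SIT(\alpha)\neq\emptyset$. The reduction frees you from checking that your two tableaux have different descent sets. Your single perturbation of the row-reading tableau, at any row $i<\ell$ with $\alpha_i\ge 2$, then covers the non-top-aligned diving-boards and the non-diving-boards uniformly. So the ``only if'' direction needs neither a case split nor the explicit diving-board formula.

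You lose nothing relative to the paper. In $T_1$ the entries $a,a+1$ lie in the same row, while in $T_2$ the entry $a+1$ sits directly above $a$. Hence $a\in\Des_{\mathfrak{S}^*}(T_2)\setminus\Des_{\mathfrak{S}^*}(T_1)$, and your construction also exhibits two distinct fundamental functions in the support.

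The paper's route has the advantage of reusing Proposition \ref{prop:diving_board_explicit_comp}, which it needs anyway for Theorem \ref{thm:main}. For the ``if'' direction you could even bypass that proposition. When $\alpha=(1^{\ell-1},k)$, the column and row conditions force the chain $T(1,1)<\cdots<T(\ell,1)<T(\ell,2)<\cdots<T(\ell,k)$, so the filling is unique.
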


\begin{proof}
Denote $\ell = \ell(\alpha)$. 
We first prove the result when $\alpha$ is a diving-board, using Proposition \ref{prop:diving_board_explicit_comp}.
Suppose $\alpha$ is a diving-board where the long row of length $k$ occurs at index $i$, for some $1 \leq i \leq \ell$. Then, there are ${n-i \choose k-1}$ terms in the $F$-expansion of $\mathfrak{S}^*_{\alpha}$. 

Suppose $\alpha$ is a top-aligned hook, i.e. $i=\ell$. Then, $n-i = n-\ell = k-1$, and so ${n-i \choose k-1} = {k-1 \choose k-1} =1$. Thus, the indexing set indeed consists of exactly one element, and so $\mathfrak{S}^*_\alpha = F_\beta$ in this case. 

On the other hand, suppose $\alpha$ is not a top-aligned hook, i.e. $i < \ell$. Then, $n-i > n-\ell = k-1$, and so ${n-i \choose k-1} \geq 2$. Thus, $\mathfrak{S}^*_{\alpha} \neq F_{\beta}$ for all $\beta \vDash n$.  

Now, suppose $\alpha$ is not a diving-board. Then, there exists $i < j$ such that $\alpha_{i}, \alpha_{j} \geq 2$.  We construct two standard immaculate tableaux $T,T'$ with distinct $\mathfrak{S}^*$-descent sets. Let $T \in \SIT(\alpha)$ be such that $n = T(j, \alpha_j)$ and $n-1 = T(i,\alpha_i)$. Note such a tableau exists since $n-1,n$ are the maximal entries of $T$, are rightmost within their rows, and are not in the first column. Notice that $n-1 \in \Des_{\mathfrak{S}^*}(T)$ since $i<j$. 

Let $T' \in \SIT(\alpha)$ be obtained from $T$ by switching the positions of entries $n-1$ and $n$. The increasing conditions on the rows and the first column will still be met since $n,n-1$ are the largest entries and are not contained in the first column. Notice that $n-1 \notin \Des_{\mathfrak{S}^*}(T')$ since $i<j$. Thus, $\Des_{\mathfrak{S}^*}(T) \neq \Des_{\mathfrak{S}^*}(T')$ and so $\mathfrak{S}^*_{\alpha}$ has at least two terms in \svw{its} fundamental expansion. 

If $\dI^*_\alpha = F_\beta$ then by Lemma \ref{lem:leading_terms_of_F_M_dI}, $\alpha = \beta$.
\end{proof}

\begin{example}
As in the proof of Proposition \ref{prop:ex_1_descent}, examples of fillings $T$ and $T'$ of $\alpha=(1,3,2,1)$ with distinct descent sets. The $\mathfrak{S}^*$-descent sets of $T,T'$ are $\{1,3,4,6 \}$ and $\{1,3,4 \}$, respectively.
\begin{center}
           $T=$\begin{ytableau}
        5 \\
        4 & 7 \\
        2 & 3 & 6 \\ 
        1 
        \end{ytableau} \hspace{0.5cm}
        $T'=$\begin{ytableau}
        5 \\
        4 & 6 \\
        2 & 3 & 7 \\ 
        1 
        \end{ytableau}
\end{center}
\vspace{0.1cm}
\end{example}

\section{Necessary and sufficient conditions for the existence of immaculate tableaux}\label{ch:nec_suff_conds}
Throughout this section, let $n$ be a positive integer, and let $\alpha,\beta$ denote two compositions of size $n$. Furthermore, let $T$ be a standard immaculate tableau.

We identify sufficient and necessary conditions to have $\SIT_{\dI^*}(\alpha ; \beta) \neq \emptyset$, or equivalently, $[F_\beta] \dI^*_\alpha \neq 0$. Compare \svw{these conditions} to Proposition \ref{prop:prop315} which is a necessary condition for $\SSIT(\alpha, \beta) \neq \emptyset$, or equivalently, $[M_\beta] \dI^*_\alpha \neq 0$. 

The conditions will follow by analyzing the restrictions placed on a standard immaculate tableau by specifying the $\mathfrak{S}^*$-descent composition or the $R\mathfrak{S}^*$-descent composition, and using that the $R\mathfrak{S}^*$-descent composition of a tableau is the complement of the $\mathfrak{S}^*$-descent composition. The ability to think of a standard immaculate \svw{tableau} as a row-strict immaculate \svw{tableau} is useful to us as it picks up a different `architecture' of the filling.

\subsection{Necessary conditions for $\SIT_{\mathfrak{S}^*}(\alpha;\beta) \neq \emptyset$}

\subsubsection{Derived from \svw{the} $\mathfrak{S}^*$-descent composition}

By Remark \ref{rem:descent_comp_as_runs}, the $\mathfrak{S}^*$-descent composition records how the entries of a standard immaculate tableau, from $1$ to $n$, can be read weakly down the rows, forming maximal runs of integers. This observation will give a necessary condition on the row indices of the \svw{entries} of $T$. By considering the number of cells which contain the first $i$ runs of $T$, we derive a necessary condition in terms of the dominance order of $\alpha$ and $\beta$. 

\begin{lemma}\label{lemma:row_indices_runs}
Suppose $\alpha,\beta \vDash n$ and $T \in \SIT_{\mathfrak{S}^*}(\alpha;\beta)$. Let $m$ be an entry of $T$ in $\{1,2,\ldots, \beta_1 +\cdots + \beta_k\}$. Then the row index of $m$ is $R_m(T) \leq k$. 
\end{lemma}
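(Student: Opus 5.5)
We argue by induction on $k$, using the run description of the $\mathfrak{S}^*$-descent composition from Remark \ref{rem:descent_comp_as_runs}. Set $D_0=0$ and $D_k=\beta_1+\cdots+\beta_k$. The $k$-th maximal run is the block of entries $D_{k-1}+1,\dots,D_k$. Within this block each entry lies weakly below its predecessor, so $R_{D_{k-1}+1}\ge R_{D_{k-1}+2}\ge\cdots\ge R_{D_k}$. It therefore suffices to bound the row index of the \emph{first} entry of each run, namely $R_{D_{k-1}+1}\le k$. Since $D_{k-1}<D_k$, the bound for entries of earlier runs is then automatic.

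The key step is the inequality $R_{m+1}\le R_m+1$ for every $m\in[n-1]$. To prove it, suppose $m+1$ lies in row $r\ge 2$. Consider the first column of $T$: it is strictly increasing from bottom to top, and each row is increasing left to right. Hence the entry $T(r,1)$ is at most $m+1$, and every entry $T(i,1)$ with $i<r$ is strictly less than $T(r,1)\le m+1$.

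Now count. The cells $(1,1),\dots,(r-1,1)$ together with the cell containing $m+1$ give at least $r$ cells with entries at most $m+1$. Any entry in a row $i\ge r+1$ is at least $T(i,1)>T(r,1)$. I would then argue directly that $m$ cannot lie in a row $\le r-2$ unless that forces a contradiction.

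A cleaner formulation may be available: the entry $T(r-1,1)$ is less than $m+1$. I would first try to show that $R_m\ge r-1$ by noting that all entries $\le m$ lying in rows $\ge r-1$ would otherwise be absent. I expect this bounding of $R_{m+1}$ in terms of $R_m$ to be the main obstacle, so I state the alternative I would actually use.

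The alternative avoids that inequality by bounding first entries of runs directly through the first column. The entry $D_{k-1}+1$ starts a new run. If it is not in the first column, it lies to the right of $T(r,1)<D_{k-1}+1$ in the same row. Among the entries $1,\dots,D_{k-1}+1$ are all of $T(1,1),\dots,T(r,1)$. These first-column entries are strictly increasing upward, so consecutive ones in reading order ascend strictly. Each pair of consecutive first-column entries therefore straddles an ascent of rows, and I would count the number of descents $\{m: R_{m+1}>R_m\}$ among $m\le D_{k-1}$, which is at most $k-1$ by definition of the runs.

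The claim I need is that reaching row $r$ requires at least $r-1$ descents before it. This follows because the row index starts at $R_1=1$ (as $1=T(1,1)$) and increases only at descents. The remaining worry is that a single descent could increase the row index by more than $1$, which is exactly the key inequality above.

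To prove the key inequality, suppose $R_{m+1}=r$ and $R_m\le r-2$. The entry $T(r-1,1)$ is less than $T(r,1)\le m+1$, so $T(r-1,1)\le m$. Let $p=T(r-1,1)$, and look at the entries $p,p+1,\dots,m$. The set of row indices visited must pass from rows $\ge r-1$ down to $R_m\le r-2$ while only increasing at descents. This alone is not contradictory, so instead I would replace the key inequality by a counting argument. Let $j$ be the largest entry with $j\le D_{k-1}+1$. The first-column entries $T(1,1)<\cdots<T(r,1)$ all lie in $[1,D_{k-1}+1]$. Between consecutive such entries $T(i,1)<T(i+1,1)$, the entry $T(i+1,1)$ lies strictly above everything in rows $\le i$.

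Hence some descent occurs in the interval $[T(i,1),T(i+1,1)-1]$. Indeed, the first entry $x$ in that interval with $R_{x+1}\ge i+1$ satisfies $R_x\le i$, because a row $\ge i+1$ entry smaller than $T(i+1,1)$ must lie in rows $\le i$ by first-column monotonicity. This gives $r-1$ distinct descents below $D_{k-1}+1$. Therefore $r-1\le k-1$, and combined with the monotonicity of row indices within a run, this proves the lemma.
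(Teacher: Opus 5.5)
Your final counting argument is correct and takes a different route from the paper, but the write-up needs pruning. The inequality $R_{m+1}\le R_m+1$ that you call the key step is not just hard to prove; it is false. For $\alpha=(2,1,1)$ take $T(1,1)=1$, $T(1,2)=3$, $T(2,1)=2$, $T(3,1)=4$. Then $R_3=1$ and $R_4=3$. With $m=3$ and $r=3$, the same tableau also refutes your intermediate claim $R_m\ge r-1$. Those paragraphs should be deleted rather than left as an unresolved worry.

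The sentence justifying the descents is garbled. What you need is that every value $y<T(i+1,1)$ lies in a row $\le i$, since an entry in a row $s\ge i+1$ is at least $T(s,1)\ge T(i+1,1)$. Hence $T(i+1,1)-1$ is itself an $\mathfrak{S}^*$-descent, and there is no need to look for ``the first $x$'' in the interval. The line introducing $j$ is vacuous and can go.

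The reduction to first entries of runs is also unnecessary. If $m\le D_k$ lies in row $r$, then $T(r,1)\le m$. So the $r-1$ distinct descents $T(2,1)-1,\dots,T(r,1)-1$ are all at most $m-1<D_k$, hence lie among $D_1,\dots,D_{k-1}$, and $r\le k$.

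The paper instead inducts on $k$. It assumes $1,\dots,D_k$ all lie in rows $1,\dots,k$. If $D_k+1$ were in a row $j>k+1$, then $T(k+1,1)<T(j,1)\le D_k+1$ would be a smaller entry in row $k+1$, a contradiction. The weak decrease of row indices along the next run then completes the step. Both proofs rest on the same observation: an entry $m$ in row $r$ satisfies $T(1,1)<\cdots<T(r,1)\le m$. The paper uses it to propagate a containment run by run, while you turn it into a descent count.

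Your version is non-inductive and isolates a reusable fact, $\{T(i,1)-1 : 2\le i\le \ell(\alpha)\}\subseteq\Des_{\mathfrak{S}^*}(T)$. This gives the lemma in the pointwise form $R_m\le 1+|\Des_{\mathfrak{S}^*}(T)\cap[m-1]|$, and yields $\ell(\beta)\ge\ell(\alpha)$ for free. It also never needs to locate the first entry of a run exactly. The paper's proof concludes $R_{D_k+1}=k+1$, which in general holds only as an inequality. For example, take $\alpha=(2,2,1)$ with rows $(1,3)$, $(2,4)$, $(5)$ from bottom to top. Then $\beta=(1,2,1,1)$ and $R_{D_2+1}=R_4=2$. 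The paper's induction, for its part, gives a run-by-run picture of which rows are filled, mirroring the row-by-row construction it later uses for sufficiency.
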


\begin{proof}
We proceed by induction on $1 \leq \tilde{k} \leq \ell(\alpha)$.
Since each row is increasing left to right, the entry 1 is contained in the first column of $T$. Since the first column is increasing from bottom to top, $1 = T(1,1)$\ME{.} Since $\Des_{\mathfrak{S}^*}(T) = \beta$, the entries $\{2,\ldots,\beta_1 \}$ have to be weakly below 1. Thus, the entries $\{1,\dots,\beta_{1} \}$ are contained in row 1 \ME{so the result holds for $\tilde{k}=1$}. 
Suppose all the \svw{entries} $\{ 1,2,\svw{\ldots, }  \beta_{1}+\dots+\beta_{\tilde{k}} \}$ are contained in rows $\{1,2,\ldots,\tilde{k} \}$.
Let $\tilde{m}=\beta_{1}+\dots+\beta_{\tilde{k}}+1$. Suppose for the sake of a contradiction that $R_{\tilde{m}}=j > \tilde{k}+1$. Let $p = T(\tilde{k}+1,1)$. Then, since the first column is increasing from bottom to top, and the $j$-th row is increasing from left to right,
\[
p=T(\tilde{k}+1,1) < T(j,1) \leq \tilde{m}.
\]
Then, $p$ is an entry of $T$ strictly less than $\tilde{m}$, but all entries strictly less than $\tilde{m}$ are in rows $\{ 1,\ldots,\tilde{k} \}$ by assumption, while $R_{p}=\tilde{k}+1$, a contradiction. Thus, $R_{\tilde{m}}=\tilde{k}+1$. Since $\Des_{\mathfrak{S}^*}(T)=\ME{\beta}$, the row indices \ME{of entries $\beta_{1}+\cdots+\beta_{\tilde{k}}+1, \ldots \svwrev{, }\beta_{1}+\cdots+\beta_{\tilde{k}+1}$} must be weakly decreasing, i.e. $\svw{\tilde{k}+1} = R_{\beta_{1}+\cdots+\beta_{\tilde{k}}+1} \geq \cdots \geq R_{\beta_{1}+\cdots+\beta_{\tilde{k}+1}}$.
\end{proof}

\begin{example}\label{ex:tab_ex_for_nec_conds}
    \svw{A} standard immaculate tableau $T$ of shape $(4,5,1)$ with $\mathfrak{S}^*$-descent composition $\beta = (2,2,2,3,1)$. Note that entries $\{1,2 \}$ are contained in row 1, while the entries $\{3,4\}$ are contained in rows 1,2.
    \begin{center}
\begin{ytableau}
    7 \\
    3 & 5 & 6 & 8 & 10 \\ 
    1 & 2 & 4 & 9 
\end{ytableau} 
    \end{center}
\vspace{0.1cm}
\end{example}

\begin{proposition}\label{prop:nec_cond_L_dom}
Let $\alpha,\beta \vDash n$ \ME{be} such that $\SIT_{\mathfrak{S}^*}(\alpha;\beta) \neq \emptyset$. Then $\beta \leq_{d} \alpha$. 
\end{proposition}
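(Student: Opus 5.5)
We derive the statement directly from Lemma \ref{lemma:row_indices_runs} by a counting argument on the cells in the bottom rows of $\alpha$. Fix $T \in \SIT_{\mathfrak{S}^*}(\alpha;\beta)$. By the definition of dominance order, we must show that for every $1 \leq k \leq \min\{\ell(\alpha),\ell(\beta)\}$,
\[
\beta_1+\cdots+\beta_k \leq \alpha_1+\cdots+\alpha_k .
\]

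The key step is the following count. By Lemma \ref{lemma:row_indices_runs}, each of the entries $1,2,\ldots,\beta_1+\cdots+\beta_k$ has row index $R_m(T)\leq k$. Hence these entries occupy distinct cells among rows $1,\ldots,k$ of the diagram of $\alpha$, since $T$ is a standard filling and therefore injective. Rows $1,\ldots,k$ contain exactly $\alpha_1+\cdots+\alpha_k$ cells, because $k\leq \ell(\alpha)$. The displayed inequality follows immediately.

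The only points needing care are the range of $k$ and the use of the lemma at the indices where it is stated. The dominance order only requires the partial sum inequalities for $k\le\min\{\ell(\alpha),\ell(\beta)\}$, so no length comparison is needed. For these $k$, both the partial sum of $\beta$ and the rows $1,\ldots,k$ of $\alpha$ make sense. The induction in Lemma \ref{lemma:row_indices_runs} runs over $\tilde k\le \ell(\alpha)$, so it applies for exactly these $k$. I do not expect any genuine obstacle here: all the structural work is done in Lemma \ref{lemma:row_indices_runs}, and the proposition is a pigeonhole consequence of it.
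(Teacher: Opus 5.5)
Your proposal is correct and matches the paper's proof: both apply Lemma \ref{lemma:row_indices_runs} to place the entries $1,\ldots,\beta_1+\cdots+\beta_k$ in rows $1,\ldots,k$, then compare their number with the $\alpha_1+\cdots+\alpha_k$ cells in those rows. Your remark that the lengths need no separate comparison is also right, because the partial-sum inequalities for $k\le\min\{\ell(\alpha),\ell(\beta)\}$ already force $\ell(\alpha)\le\ell(\beta)$.
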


\begin{proof}
Let $1 \leq k \leq \min \{ \ell(\alpha),\ell(\beta) \}$. 
Then, $\sum_{i=1}^k \alpha_{i}$ is the number of cells in the first $k$ rows of the diagram of $\alpha$, while $\sum_{i=1}^k \beta_{i}$ is the number of entries in $\{1,2,\ldots, \beta_{1}+\dots+\beta_{k} \}$. Since the set of these entries is contained in the first $k$ rows of $\alpha$ by Lemma \ref{lemma:row_indices_runs}, it follows that $\sum_{i=1}^k \beta_{i} \leq \sum_{i=1}^k \alpha_{i}$.
\end{proof}

\svw{As a corollary we can give} necessary conditions in terms of the dominance order for the existence of variants of standard immaculate tableaux.

\begin{corollary}\label{cor:nec_cond_dom}
Let $\alpha,\beta \vDash n$. 
\begin{enumerate}
    \item If $\SIT_{\mathfrak{S}^*}(\alpha;\beta) \neq \emptyset$, then $\beta \leq_{d} \alpha$.
    \item If $\SRIT_{\mathfrak{S}^*}(\alpha;\beta) \neq \emptyset$, then $\beta^r \leq_{d} \alpha^r$.
    \item If $\SIT_{R\mathfrak{S}^*}(\alpha;\beta) \neq \emptyset$, then $\beta^c \leq_{d} \alpha$.
    \item If $\SRIT_{R\mathfrak{S}^*}(\alpha;\beta) \neq \emptyset$ , then $\beta^t \leq_{d} \alpha^r$.
\end{enumerate}
\end{corollary}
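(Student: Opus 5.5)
Part (1) is exactly Proposition \ref{prop:nec_cond_L_dom}, so nothing new is needed there. The plan for the remaining parts is to reduce each one to part (1), using the $\flip$ map of Lemma \ref{lem:flip_reverses_content} to pass between standard reverse immaculate and standard immaculate tableaux, and the complement relation between the two descent sets to pass between $\mathfrak{S}^*$ and $R\mathfrak{S}^*$.

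For (3), take $T \in \SIT_{R\mathfrak{S}^*}(\alpha;\beta)$. Since $\Des_{R\mathfrak{S}^*}(T) = (\Des_{\mathfrak{S}^*}(T))^c$ as subsets of $[n-1]$, we have $\Comp(\Des_{\mathfrak{S}^*}(T)) = \beta^c$, using that complement is an involution. Thus $T \in \SIT_{\mathfrak{S}^*}(\alpha;\beta^c)$, and part (1) gives $\beta^c \leq_d \alpha$.

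For (2), take $T \in \SRIT_{\mathfrak{S}^*}(\alpha;\beta)$. By Lemma \ref{lem:flip_reverses_content}(2) with $\ast = \mathfrak{S}^*$, we get $\flip(T) \in \SIT_{\mathfrak{S}^*}(\alpha^r;\beta^r)$. Applying part (1) to this set yields $\beta^r \leq_d \alpha^r$.

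For (4), take $T \in \SRIT_{R\mathfrak{S}^*}(\alpha;\beta)$. Lemma \ref{lem:flip_reverses_content}(2) with $\ast = R\mathfrak{S}^*$ gives $\flip(T) \in \SIT_{R\mathfrak{S}^*}(\alpha^r;\beta^r)$. Part (3) then gives $(\beta^r)^c \leq_d \alpha^r$, and $(\beta^r)^c = \beta^t$ by the definition of transpose.

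No step is a real obstacle. The only point needing care is the complement identity in (3): it holds for standard reverse immaculate tableaux as well, but here it is only applied after $\flip$, to a tableau in $\SIT$. It also matters that the reversal in $\flip$ applies to both $\alpha$ and $\beta$, which is exactly what Lemma \ref{lem:flip_reverses_content} provides.
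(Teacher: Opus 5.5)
Your proof is correct and follows the paper's argument exactly. Part (1) is Proposition~\ref{prop:nec_cond_L_dom}, part (3) comes from $\SIT_{R\mathfrak{S}^*}(\alpha;\beta)=\SIT_{\mathfrak{S}^*}(\alpha;\beta^c)$, and parts (2) and (4) are reduced to (1) and (3) via $\flip$ and Lemma~\ref{lem:flip_reverses_content}; you have only made explicit details the paper leaves implicit, such as $(\beta^r)^c=\beta^t$.
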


\begin{proof}
Statement 1 is Proposition \ref{prop:nec_cond_L_dom}. Statement 2 follows from 1 by the flip map and Lemma \ref{lem:flip_reverses_content}. Statement 3 follows from 1 since $\SIT_{R\mathfrak{S}^*}(\alpha;\beta) = \SIT_{\mathfrak{S}^*}(\alpha;\beta^c)$. Finally, Statement 4 follows from 3 using the flip map and Lemma \ref{lem:flip_reverses_content}.
\end{proof}

\begin{remark}
    One can derive weaker necessary conditions for $\SIT_{\dI^*}(\alpha ; \beta) \neq \emptyset$ \svw{and the other variants} using $\beta \leq_d \alpha$ implies $\beta \leq_\ell \alpha$ and $\ell(\alpha) \leq \ell(\beta)$.
\end{remark}

\subsubsection{Derived from \svw{the} $R\mathfrak{S}^*$-descent composition}

Recall that a standard immaculate tableau with $\mathfrak{S}^*$-descent composition can be interpreted as a row-strict standard immaculate \svw{tableau,} and the $R\mathfrak{S}^*$-descent composition is the complement of the $\mathfrak{S}^*$-descent composition. As well, by Remark \ref{rem:descent_comp_as_runs}, the $R\mathfrak{S}^*$-descent composition records how the entries of a standard immaculate tableau, from $1$ to $n$, can be read strictly up rows, forming maximal runs of integers. Interpreting this will give another necessary condition on the existence of a \svw{tableau} of shape $\alpha$ and $\mathfrak{S}^*$-descent composition $\beta^c$. For a composition $\alpha = (\alpha_1,\alpha_2,\ldots,\alpha_\ell)$, define $\max \alpha = \max \{ \alpha_i \}_{i=1}^\ell$.

\begin{proposition}\label{prop:nec_cond_L_from_RG_des}
\svw{Let $\alpha,\beta \vDash n$ be} such that $\SIT_{\mathfrak{S}^*}(\alpha;\beta) \neq \emptyset$. Then $\ell(\alpha) \geq \max \beta ^c$.
\end{proposition}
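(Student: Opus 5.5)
We view $T \in \SIT_{\mathfrak{S}^*}(\alpha;\beta)$ through its $R\mathfrak{S}^*$-descent composition. Since $\Des_{R\mathfrak{S}^*}(T) = (\Des_{\mathfrak{S}^*}(T))^c$ as subsets of $[n-1]$, the $R\mathfrak{S}^*$-descent composition of $T$ is $\beta^c$. By Remark \ref{rem:descent_comp_as_runs}, the parts of $\beta^c$ are the lengths of the maximal runs of consecutive entries $m, m+1, \ldots$ in which each entry lies \emph{strictly above} the previous one. So it suffices to show that every such run has length at most $\ell(\alpha)$.

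Take a part $\beta^c_j$ of $\beta^c$. The corresponding run is a block of consecutive integers $m, m+1, \ldots, m+\beta^c_j-1$ with
\[
R_m(T) < R_{m+1}(T) < \cdots < R_{m+\beta^c_j-1}(T).
\]
These are $\beta^c_j$ distinct row indices, all lying in $\{1,\ldots,\ell(\alpha)\}$. Hence $\beta^c_j \leq \ell(\alpha)$. Applying this to the part with $\beta^c_j = \max \beta^c$ gives $\ell(\alpha) \geq \max \beta^c$.

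The only point needing care is the claim that the runs read strictly up the rows are exactly the parts of $\beta^c$. This comes directly from $\Des_{R\mathfrak{S}^*}(T) = \{ i \mid i+1 \text{ weakly below } i\}$ being the complement of $\Des_{\mathfrak{S}^*}(T)$ together with Remark \ref{rem:descent_comp_as_runs}. No immaculate-tableau axioms beyond standardness are used, so I do not expect a real obstacle.
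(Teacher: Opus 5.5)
Your proof is correct and is essentially the paper's argument: the paper relabels so that the $R\mathfrak{S}^*$-descent composition is $\beta$ itself, and notes that the row indices strictly increase within each run starting from at least $1$, so the last entry of a run lies in a row of index at least the run length, which is at most $\ell(\alpha)$. Your formulation, that a run of length $\beta^c_j$ occupies $\beta^c_j$ distinct row indices in $\{1,\ldots,\ell(\alpha)\}$, is the same pigeonhole step without the relabeling.
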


\begin{proof}
Since complement is an involution, we can assume $T \in \SIT_{\mathfrak{S}^*}(\alpha;\beta^c) = \SIT_{R\mathfrak{S}^*}(\alpha;\beta)$. Consider $1 \leq i \leq \ell(\beta)$. Then, $\beta_{1}+\cdots+\beta_{i-1}+1,\svw{\ldots,}\beta_{1}+\svw{\cdots}+\beta_{i}$ are entries in $T$ with row indices $R_{\beta_{1}+\cdots+\beta_{i-1}+1},\ldots,R_{\beta_{1}+\cdots+\beta_{i}}$ respectively, such that the sequence $(R_{m})$ is strictly increasing. Since $R_{\beta_{1}+\cdots+\beta_{i-1}+1} \geq 1$, it follows that $R_{\beta_{1}+\cdots+\beta_{i}} \geq \beta_{i}$. Thus, $\ell(\alpha) \geq \max_{1 \leq i \leq \ell(\beta)} R_{\beta_{1}+\cdots+\beta_{i}} \geq \max_{1 \leq i \leq \ell(\beta)} \beta_{i} = \max \beta$. 
\end{proof}

\begin{example}
    Refer to the standard immaculate tableau $T$ in Example \ref{ex:tab_ex_for_nec_conds} with shape $\alpha = (4,5,1)$ and $\mathfrak{S}^*$-descent composition $\beta = (2,2,2,3,1)$. Note that $\ell(\alpha)=3 \geq \max \beta^c = \max (1,2,2,2,1,2)=2$.
\end{example}

\begin{corollary}\label{cor:nec_cond_L_from_RG_des}
Let $\alpha,\beta \vDash n$. 
\begin{enumerate}
    \item If $\SIT_{\mathfrak{S}^*}(\alpha;\beta) \neq \emptyset$, then $\ell(\alpha) \geq \max \beta ^c$.
    \item If $\SRIT_{\mathfrak{S}^*}(\alpha;\beta) \neq \emptyset$, then $\ell(\alpha)\geq \max \beta^c$. 
    \item If $\SIT_{R\mathfrak{S}^*}(\alpha;\beta) \neq \emptyset$, then $\ell(\alpha)\geq \max\beta$.
    \item If $\SRIT_{R\mathfrak{S}^*}(\alpha;\beta) \neq \emptyset$, then $\ell(\alpha) \geq \max \beta$.
\end{enumerate}
\end{corollary}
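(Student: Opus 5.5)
Statement 1 is exactly Proposition \ref{prop:nec_cond_L_from_RG_des}, so the plan is to transport it to the other three variants using two tools already in hand: the identity $\SIT_{R\mathfrak{S}^*}(\alpha;\beta) = \SIT_{\mathfrak{S}^*}(\alpha;\beta^c)$ (and likewise for $\SRIT$), which holds because $\Des_{R\mathfrak{S}^*}(T)=(\Des_{\mathfrak{S}^*}(T))^c$; and the flip map together with Lemma \ref{lem:flip_reverses_content}. I also need two elementary facts. First, $\ell(\alpha^r)=\ell(\alpha)$. Second, $\max \gamma^r = \max \gamma$ for any composition $\gamma$, and $(\beta^r)^c = (\beta^c)^r$, so that $\max (\beta^r)^c = \max \beta^c$.

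For Statement 3, suppose $\SIT_{R\mathfrak{S}^*}(\alpha;\beta)\neq\emptyset$. Then $\SIT_{\mathfrak{S}^*}(\alpha;\beta^c)\neq\emptyset$, and Statement 1 applied to $\beta^c$ gives $\ell(\alpha)\geq \max (\beta^c)^c = \max\beta$, since complement is an involution.

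For Statement 2, take $T\in \SRIT_{\mathfrak{S}^*}(\alpha;\beta)$. By Lemma \ref{lem:flip_reverses_content}, $\flip(T)\in \SIT_{\mathfrak{S}^*}(\alpha^r;\beta^r)$. Statement 1 then gives $\ell(\alpha)=\ell(\alpha^r)\geq \max (\beta^r)^c = \max (\beta^c)^r = \max \beta^c$. Statement 4 is handled the same way: flip sends $\SRIT_{R\mathfrak{S}^*}(\alpha;\beta)$ into $\SIT_{R\mathfrak{S}^*}(\alpha^r;\beta^r)$, and Statement 3 yields $\ell(\alpha)\geq \max\beta^r=\max\beta$. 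Alternatively, Statement 4 follows from Statement 2 by the complement identity for $\SRIT$.

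There is no real obstacle. The only point to check is the commutation $(\beta^r)^c=(\beta^c)^r$, which the paper already records in the definition of the transpose, $\alpha^t=(\alpha^c)^r=(\alpha^r)^c$, together with the invariance of $\max$ under reversal.
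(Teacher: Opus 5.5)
Your proposal is correct and follows the paper's argument. Statement 1 is Proposition~\ref{prop:nec_cond_L_from_RG_des}, Statement 3 follows from $\SIT_{R\mathfrak{S}^*}(\alpha;\beta)=\SIT_{\mathfrak{S}^*}(\alpha;\beta^c)$, and Statements 2 and 4 follow by the flip map and Lemma~\ref{lem:flip_reverses_content}; your checks that $\ell$ and $\max$ are invariant under reversal and that $(\beta^r)^c=(\beta^c)^r$ supply details the paper leaves implicit.
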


\begin{proof}
The proof is identical to the proof of Corollary \ref{cor:nec_cond_dom} using Proposition \ref{prop:nec_cond_L_from_RG_des} for Statement 1.
\end{proof}

\subsection{Sufficient conditions for $\SIT_{\mathfrak{S}^*}(\alpha;\beta) \neq \emptyset$}

As a partial converse to the above necessary condition concerning the dominance order in Proposition \ref{prop:nec_cond_L_dom}, we give two conditions on $\alpha,\beta$ that, along with $\beta\leq_{d} \alpha$, guarantee that $\SIT_{\mathfrak{S}^*}(\alpha;\beta)\neq \emptyset$.

\begin{proposition}\label{prop:suff_cond_equal_len}
    Let $\alpha,\beta \vDash n$. Suppose $\beta \leq_{d} \alpha$ and $\ell(\alpha)=\ell(\beta)$. Then $\SIT_{\mathfrak{S}^*}(\alpha;\beta) \neq \emptyset$.
\end{proposition}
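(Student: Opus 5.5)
We construct an explicit tableau row by row. Write $\ell=\ell(\alpha)=\ell(\beta)$ and set $B_k=\beta_1+\cdots+\beta_k$, $A_k=\alpha_1+\cdots+\alpha_k$, with $B_0=A_0=0$; the hypothesis $\beta\leq_d\alpha$ gives $B_k\leq A_k$ for all $k$, with equality at $k=\ell$. Guided by Lemma \ref{lemma:row_indices_runs}, the $k$-th run $\{B_{k-1}+1,\ldots,B_k\}$ should start in row $k$ and then move weakly down. Since there are exactly $\ell$ runs and $\ell$ rows, this forces the first entry of run $k$ to be $T(k,1)=B_{k-1}+1$. The remaining $\beta_k-1$ entries of run $k$ must be placed in rows $\leq k$, in weakly decreasing row order, and only in cells outside the first column.

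Concretely, we fill greedily: process $m=1,\ldots,n$ in order. If $m=B_{k-1}+1$, put $m$ in cell $(k,1)$. Otherwise $m$ lies in run $k$ with a previous entry $m-1$ in row $r$; put $m$ in the leftmost empty cell of the highest row $r'\leq r$ that still has an empty cell. Rows are then filled left to right in increasing order, and column $1$ is increasing by construction. For the descent composition, $m=B_{k-1}+1$ lies in row $k$ while $m-1$ lies in a row $\leq k-1$, which gives a descent. Inside a run the rows weakly decrease, so there is no descent. Hence $\Des_{\mathfrak{S}^*}(T)=\beta$, provided the greedy step never fails.

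The main obstacle is showing that a cell is always available: when placing $m$ inside run $k$, some row $\leq r$ must still have an empty non-first-column cell. We count. After run $k$ is complete, the entries used are exactly $B_k$, and they all lie in rows $1,\ldots,k$, which have $A_k\geq B_k$ cells. The difficulty is that the greedy choice must not exhaust the low rows while higher rows sit empty. Because the row choice at each step is "highest available row $\leq$ current row", the filled cells of rows $1,\ldots,k$ at any time form a set that is downward closed in the appropriate sense. So if a run gets stuck at row $r$, then rows $1,\ldots,r$ are completely full. Let $k$ be the current run and $r\leq k$ the stuck row. The entries so far occupy all $A_r$ cells of rows $1,\ldots,r$. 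Each of those entries lies in some run $j$; the entries in rows $\leq r$ come from runs $j\leq k$. Moreover, runs $j>r$ contribute nothing to rows $\leq r$ except continuation entries, which is where care is needed.

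We would argue by induction on $k$, with a strengthened invariant: after completing run $k$, for every $r\leq k$ the number of entries in rows $r,\ldots,k$ is at least the number that would be there if the later runs only fell down. Equivalently, the greedy filling keeps the upper rows as full as possible. Then a failure in run $k$ would mean $m>A_k$ entries in rows $1,\ldots,k$, with $m\leq B_k$, contradicting $B_k\leq A_k$. If this invariant turns out to be awkward, the fallback is to fill instead by placing each continuation entry in row $k$ whenever possible, dropping to row $k-1$ only when row $k$ is full, and so on. In that version rows are filled top-down and a Hall-type count using $B_j\leq A_j$ for each $j$ closes the argument.
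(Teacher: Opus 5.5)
Your construction is the same as the paper's. Run $k$ starts with $B_{k-1}+1$ in cell $(k,1)$, fills row $k$ from left to right, and then drops to the free cells of rows $k-1,k-2,\ldots$ in turn. Your checks of the row condition, the first-column condition and $\Des_{\mathfrak{S}^*}(T)=\Set(\beta)$ are fine.

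The gap is in the one step that actually uses $\beta\leq_d\alpha$: you never show that the greedy cannot get stuck. Being stuck after $m-1$ was placed in row $r$ only tells you that rows $1,\ldots,r$ are full. Your sentence ``a failure in run $k$ would mean $m>A_k$ entries in rows $1,\ldots,k$'' is asserted, not derived. Three parts of your argument do not supply it:
\begin{itemize}
\item The ``downward closed'' property is left unspecified, and it is not the relevant one. Lower rows are in general not filled before higher ones: for $\alpha=(3,1,3)$ and $\beta=(1,1,5)$, row $2$ is full after run $2$ while row $1$ still holds only the entry $1$.
\item The ``strengthened invariant'' is never stated precisely.
\item Your fallback is literally the same algorithm as your main construction, because ``highest row $\leq r$ with an empty cell'' equals ``highest row $\leq k$ with an empty cell'' once passed rows are full.
\end{itemize}

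The missing observation is short and needs no induction and no Hall-type count; only $B_k\leq A_k$ for the current $k$ is used. Before run $k$ begins, every entry lies in a row $\leq k-1$, so rows $\geq k$ are empty and $(k,1)$ is available. This is where $\ell(\alpha)=\ell(\beta)$ enters. Within run $k$ the rows visited are $k=r_0\geq r_1\geq\cdots$, and the greedy passes from $r_t$ to some $r_{t+1}<r_t$ only when rows $r_{t+1}+1,\ldots,r_t$ are full. Cells never become empty again, so these rows stay full.

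So suppose placing $m$ fails after $m-1$ went into row $r$. Then rows $r+1,\ldots,k$ are full by the observation above. Rows $1,\ldots,r$ are full by the definition of failure, since their first-column cells are already occupied. Hence the entries $1,\ldots,m-1$ fill all $A_k$ cells of rows $1,\ldots,k$, that is, $m-1=A_k$. But $m\leq B_k\leq A_k$, a contradiction.

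This is exactly the paper's count: after runs $1,\ldots,k-1$ there are $A_k-B_{k-1}\geq\beta_k$ free cells in rows $1,\ldots,k$, and a run that starts at row $k$ and descends can reach every one of them. With this argument inserted, your proof is complete.
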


\begin{proof}
Given $\alpha,\beta \vDash n$ such that $\beta \leq_d \alpha$ and $\ell(\alpha) = \ell(\beta)$, 
    we construct a standard immaculate tableau $T$ of shape $\alpha$ and $\mathfrak{S}^*$-descent composition $\beta$. Let $\ell=\ell(\alpha)=\ell(\beta)$.
    
\svw{We will do an induction on the indices of $\beta$.} Since $\beta_{1} \leq \alpha_{1}$, we can assign entries $1,2,\dots,\beta_{1}$ 
to row 1, such that the row is increasing left to right, i.e. to cells $(1,1), (1,2), \ldots, (1,\beta_1)$ respectively.
\svw{Now suppose that} $k<\ell$ and the entries $\{1,2, \ldots, \beta_{1}+\cdots+\beta_{k} \}$ have been assigned among rows $1,2,\ldots, k$. 
Then, let $\beta_{1}+\cdots+\beta_{k}+1 = T(k+1,1)$. Note this is possible since $k+1 \leq \ell$. 
Then assign entries $\{ \beta_{1}+\cdots+\beta_{k}+2,\ldots,\beta_{1}+\cdots+\beta_{k+1} \}$ \svw{in order} among free cells, such that each row is increasing left to right and the row indices are weakly decreasing:
	In particular, if $\alpha_{k+1} \geq 2$, place entries $\{ \beta_1 + \cdots + \beta_k + 2, \ldots, \beta_1 + \cdots + \beta_k + \alpha_{k+1} \}$ into cells $(k+1,2),(k+1,3),\ldots,(k+1,\alpha_{k+1})$ respectively, filling row $k+1$. If $\beta_{k+1} > \alpha_{k+1}$, place as many of the remaining entries as possible into \svwrev{`free'} cells in row $k$, i.e. cells that have not had an entry assigned yet, such that row $k$ is increasing left to right, and so on with the rows below \svw{in order.}
 
	Such a filling is possible since the number of free cells in rows $1,2,\ldots, k+1$ is $\sum_{i=1}^{k+1}\alpha_{i} - \sum_{i=1}^k \beta_{i}$, while the number of entries that have to be assigned in this step is $\beta_{k+1} = \sum_{i=1}^{k+1}\beta_{i} - \sum_{i=1}^k \beta_{i}$. Since $\beta \leq_d \alpha$, this there is at least as many free cells as entries that have to be assigned.
    
    Note, since cells $(1,1),(2,1),\ldots,(k+1,1)$ have already been filled, entries $\{ \beta_{1} + \cdots + \beta_{k}+2,\ldots,\beta_{1} + \cdots + \beta_{k+1} \} $ will be assigned to columns $\geq 2$. 

\svw{We next} check that the constructed tableau is in fact an immaculate tableau. Observe that the rows are increasing left to right by construction. As well, the first column of $T$ is increasing since its content is exactly (reading from bottom to top) $ \{ 1 \} \cup (\Set(\beta)+1)$. Thus, $T \in \SIT(\alpha)$.

\svw{Finally, it} remains to verify $\Des_{\mathfrak{S}^*}(T)=\Set(\beta)$, i.e. an entry $m$ of $T$ is a $\mathfrak{S}^*$-descent of $T$ if and only if $m= \beta_1 + \cdots + \beta_i$ for some $i$. By construction, \svw{for all $i$} the row indices of \svw{$\{\beta_1+\cdots+\beta_{i-1} +1, \ldots,\beta_1+\cdots+\beta_{i} \}$} are weakly decreasing  and so $\svw{\{\beta_1+\cdots+\beta_{i-1} +1, \ldots,\beta_1+\cdots+\beta_{i}-1 \}} \subseteq (\Des_{\mathfrak{S}^*}(T))^c$. As well, the row index of $\beta_1+\cdots+\beta_i$ is \svw{at most} $i$, while the row index of $\beta_1+\cdots+\beta_i+1$ is $i+1$, and so $\beta_1+\cdots+\beta_i \in \Des_{\mathfrak{S}^*}(T)$.
\end{proof}

\begin{example}\label{ex:suff_cond_equal_length_ex}
    A standard immaculate tableau of shape $\alpha=(1,4,3,2,5)$ and $\dI^*$-descent composition $\beta=(1,3,2,1,8)$, as constructed in the proof of Proposition \ref{prop:suff_cond_equal_len}.
    \begin{center}
       \begin{ytableau}
    8 & 9 & 10 & 11 & 12 \\ 
    7 & 13 \\ 
    5 & 6 & 14 \\ 
    2 & 3 & 4 & 15 \\ 
    1 
\end{ytableau}
    \end{center}
\vspace{0.1cm}
\end{example}

\begin{corollary}\label{cor:suff_conds_SIT_equal_len}
Let $\alpha,\beta \vDash n$. 
\begin{enumerate}
    \item If $\beta \leq_d \alpha$ and $\ell(\alpha)=\ell(\beta)$, then $\SIT_{\mathfrak{S}^*}(\alpha;\beta) \neq \emptyset$.
    \item If $\beta^r \leq_d \alpha^r$ and $\ell(\alpha)=\ell(\beta)$, then $\SRIT_{\mathfrak{S}^*}(\alpha;\beta) \neq \emptyset$.
    \item If $\beta^c \leq_d \alpha$ and $\ell(\alpha)=n+1 - \ell(\beta)$, then $\SIT_{R\mathfrak{S}^*}(\alpha;\beta) \neq \emptyset$. 
    \item If $\beta^t \leq_d \alpha^r$ and $\ell(\alpha)=n+1 - \ell(\beta)$, then $\SRIT_{R\mathfrak{S}^*}(\alpha;\beta) \neq \emptyset$.
\end{enumerate}
\end{corollary}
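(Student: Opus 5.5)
Statement 1 is exactly Proposition \ref{prop:suff_cond_equal_len}, so no further work is needed there. For the remaining three statements I will reduce to Statement 1, mirroring the proof of Corollary \ref{cor:nec_cond_dom}. The two tools are the flip map, via Lemma \ref{lem:flip_reverses_content}, and the identity $\SIT_{R\mathfrak{S}^*}(\alpha;\beta)=\SIT_{\mathfrak{S}^*}(\alpha;\beta^c)$. The latter holds because $\Des_{R\mathfrak{S}^*}(T)$ is the complement of $\Des_{\mathfrak{S}^*}(T)$ in $[n-1]$.

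For Statement 2, suppose $\beta^r\leq_d\alpha^r$ and $\ell(\alpha)=\ell(\beta)$. Then $\ell(\alpha^r)=\ell(\beta^r)$, so Statement 1 applied to the pair $(\alpha^r,\beta^r)$ gives a tableau $T\in\SIT_{\mathfrak{S}^*}(\alpha^r;\beta^r)$. By Lemma \ref{lem:flip_reverses_content}(1), $\flip(T)\in\SRIT_{\mathfrak{S}^*}(\alpha;\beta)$, using that $(\alpha^r)^r=\alpha$ and $(\beta^r)^r=\beta$. Hence $\SRIT_{\mathfrak{S}^*}(\alpha;\beta)\neq\emptyset$.

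For Statement 3, suppose $\beta^c\leq_d\alpha$ and $\ell(\alpha)=n+1-\ell(\beta)$. The key step is Lemma \ref{lem:len_complement}, which gives $\ell(\beta^c)=n+1-\ell(\beta)$. Hence $\ell(\alpha)=\ell(\beta^c)$, and Statement 1 applied to $(\alpha,\beta^c)$ shows $\SIT_{\mathfrak{S}^*}(\alpha;\beta^c)\neq\emptyset$. By the complement identity above, this set equals $\SIT_{R\mathfrak{S}^*}(\alpha;\beta)$.

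For Statement 4, suppose $\beta^t\leq_d\alpha^r$ and $\ell(\alpha)=n+1-\ell(\beta)$. Since reversal preserves length, $\ell(\alpha^r)=n+1-\ell(\beta^r)$. Also $(\beta^r)^c=\beta^t\leq_d\alpha^r$. So Statement 3 applies to the pair $(\alpha^r,\beta^r)$ and gives $T\in\SIT_{R\mathfrak{S}^*}(\alpha^r;\beta^r)$. By Lemma \ref{lem:flip_reverses_content}(1) with $\ast=R\mathfrak{S}^*$, $\flip(T)\in\SRIT_{R\mathfrak{S}^*}(\alpha;\beta)$.

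There is no real obstacle here. The only points to check are the length bookkeeping in Statement 3, which is handled by Lemma \ref{lem:len_complement}, and that transpose decomposes as $\beta^t=(\beta^r)^c$ so that Statement 4 reduces cleanly to Statement 3.
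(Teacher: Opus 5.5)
Your proposal is correct and matches the paper's proof. Statement 1 is Proposition \ref{prop:suff_cond_equal_len}, Statements 2 and 4 follow via the flip map and Lemma \ref{lem:flip_reverses_content}, and Statement 3 follows from $\SIT_{R\mathfrak{S}^*}(\alpha;\beta)=\SIT_{\mathfrak{S}^*}(\alpha;\beta^c)$ together with Lemma \ref{lem:len_complement}; your checks that $(\beta^r)^c=\beta^t$ and $\ell(\beta^c)=n+1-\ell(\beta)$ simply spell out details the paper leaves implicit.
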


\begin{proof}
    Statement 1 is Proposition \ref{prop:suff_cond_equal_len}. 
    Statement 2 follows from 1 by the flip map and Lemma \ref{lem:flip_reverses_content}. 
    Statement 3 follows from 1 since $\SIT_{R\mathfrak{S}^*}(\alpha;\beta) = \SIT_{\mathfrak{S}^*}(\alpha;\beta^c)$ and by Lemma \ref{lem:len_complement}. Finally, Statement 4 follows by 3 using the flip map and Lemma \ref{lem:flip_reverses_content}. 
\end{proof}

We derive a more specialized sufficient condition for when $\beta$ is a diving-board, which will be useful to us in \svw{Section} \ref{ch:dI_images}.

\begin{proposition}\label{prop:suff_cond_diving_board}
Let $\alpha,\beta \vDash n$ with $\ell=\ell(\alpha)$. Suppose $\alpha$ is not a diving-board and $\beta$ is a diving-board of length $\ell+1$, such that the \svw{long row} occurs at index $\ell$, i.e. $\beta = (1^{\ell-1},n-\ell, 1)$. 
Then $\SIT_{\mathfrak{S}^*}(\alpha,\beta) \neq \emptyset$.
\end{proposition}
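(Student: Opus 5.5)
The plan is to build an explicit tableau $T \in \SIT(\alpha)$ and check directly that $\Des_{\mathfrak{S}^*}(T) = \Set(\beta)$. We have $\Set(\beta) = \{1,2,\ldots,\ell-1,\, n-1\}$, so by Remark \ref{rem:descent_comp_as_runs} we need three things:
\begin{itemize}
\item[(a)] each of $1,\ldots,\ell$ lies strictly above its predecessor;
\item[(b)] the run $\ell,\ell+1,\ldots,n-1$ is read weakly down the rows;
\item[(c)] $n$ lies strictly above $n-1$.
\end{itemize}

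For the construction, first place entry $m$ in cell $(m,1)$ for $m=1,\ldots,\ell$, which fills the first column. This gives (a) and makes the first column strictly increasing. Since $\alpha$ is not a diving-board, the set of rows of length at least $2$ has at least two elements. Let $j$ be the largest such index and $i'$ the smallest, so that $i'<j$.

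Next, put $n$ in cell $(j,\alpha_j)$. The remaining $n-\ell-1$ cells outside the first column are filled with $\ell+1,\ldots,n-1$ in a fixed order: process the rows from top (row $\ell$) to bottom (row $1$), and within each row go left to right through columns $2,\ldots,\alpha_r$, skipping the reserved cell $(j,\alpha_j)$. Because the rows are processed downward, the row indices of $\ell+1,\ldots,n-1$ are weakly decreasing. The last cell filled is $(i',\alpha_{i'})$, so $n-1$ lies in row $i'$. This step is where the non-diving-board hypothesis is really used: we need $i' \neq j$, which guarantees that $n-1$ is not in row $j$.

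The checks then go as follows.
\begin{itemize}
\item Rows increase left to right. Each first-column entry $r$ is at most $\ell$, which is less than every other entry. Within a row, later cells receive larger numbers, and $n$ is placed last in its row. If $\alpha_j=2$, row $j$ contains only $j$ and $n$, which is harmless.
\item The first column increases, so $T\in\SIT(\alpha)$.
\item Descents. The step from $\ell$ (in row $\ell$) to $\ell+1$ is weakly down. The entries $\ell+1,\ldots,n-1$ go weakly down by construction, which gives (b). Finally, $R_{n-1}=i'<j=R_n$, so $n-1$ is a descent, which gives (c).
\end{itemize}
Together with (a), this yields $\Des_{\mathfrak{S}^*}(T)=\{1,\ldots,\ell-1,n-1\}=\Set(\beta)$.

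The only delicate point is arranging that $n-1$ and $n$ end up in different rows with $n$ higher. Choosing $j$ topmost and $i'$ bottommost among the long rows, and filling from the top down, handles this cleanly. The degenerate case $\alpha_j=2$ should be double-checked but causes no problem.
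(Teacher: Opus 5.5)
Your proof is correct and follows the paper's construction: the first column gets $1,\dots,\ell$, $n$ goes at the end of a long row $j$ lying above the lowest long row $i$, and the remaining cells are filled top-to-bottom and left-to-right, so that $n-1$ sits at $(i,\alpha_i)$ with $R_{n-1}<R_n$. The only differences are cosmetic. The paper places $n-1$ at $(i,\alpha_i)$ explicitly and allows any long row $j>i$. You obtain that position for $n-1$ from the filling order, and choosing $j$ topmost is harmless but not needed.
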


\begin{proof}
    Since $\alpha$ is not a diving-board, there \svw{exists} $i<j$ such that $\alpha_{i},\alpha_{j} \geq 2$. Let $i$ be the minimal index such that $\alpha_{i}\geq 2$.
We construct a standard immaculate tableau of shape $\alpha$ and $\mathfrak{S}^*$-descent composition $\beta$. 
Fill the diagram  \svw{of $\alpha$} as follows to get $T \in \SIT(\alpha)$:
\begin{itemize}
    \item Assign $1,2,\dots,\svw{\ell}$ \svw{to} the first column, such that the column is increasing from bottom to top, i.e. $i = T(i,1)$ for $i=1,2,\ldots,\ell$.
    \item Assign $n$ rightmost in row $j$, i.e. $n = T(j,\alpha_j)$.
    \item Assign $n-1$ rightmost in row $i$, i.e. $n = T(i,\alpha_i)$.
    \item Distribute the entries $\ell+1,\ell+2,\dots,n-2$ among the remaining cells, such that the rows are increasing left to right, and the row indices are weakly decreasing, i.e. $R_{\ell+1} \geq R_{\ell+2} \geq \cdots \geq R_{n-2}$. This can be done by placing the entries in order in the free cells from left to right in the rows, going from top to bottom. 
\end{itemize}

By construction, \svw{$T\in \SIT(\alpha)$.} We verify $\Des_{\mathfrak{S}^*}(T)=\beta$ by checking the $\mathfrak{S}^*$-descent set of $T$ is equal to $\Set(\beta)=\{ 1,2,\ldots,\ell-1,n-1 \}$. Observe that for \svw{every} entry $m \leq \ell-1$, both $m,m+1$ are in column 1, so $m \in \Des_{\mathfrak{S}^*}(T)$, since the first column is increasing from bottom to top. As well, $\ell \not\in \Des_{\mathfrak{S}^*}(T)$ since $\ell$ is in the topmost row. For $m \in \{ \ell+1,\dots,n-3 \}$, $m \not\in \Des_{\mathfrak{S}^*}(T)$ since \svw{the} row indices of $m,m+1$ are weakly decreasing by construction. Since $i$ is minimal such that $\alpha_{i} \geq 2$, it must be that $R_{n-2}\geq i=R_{n-1}$ and so $n-2 \not\in \Des_{\mathfrak{S}^*}(T)$. Finally, since $i<j$, $n-1 \in \Des_{\mathfrak{S}^*}(T)$.
\end{proof}

\begin{example}\label{ex:suff_cond_diving_board_ex}
    A standard immaculate tableau of shape $\alpha=(1,1,3,2,4,1)$ and $\dI^*$-descent composition $\beta=(1,1,1,1,1,6,1)$, as constructed in the proof of Proposition \ref{prop:suff_cond_diving_board}. Here, $i=3$ and $j=4$.
    \begin{center}
\begin{ytableau}
    6 \\ 
    5 & 7 & 8 & 9 \\ 
    4 & 12 \\ 
    3 & 10 & 11 \\ 
    2 \\
    1 
\end{ytableau}
    \end{center}
    \vspace{0.1cm}
\end{example}

\begin{corollary}\label{cor:suff_conds_SIT_diving_brd}
Let $\alpha,\beta \vDash n$ with $\ell=\ell(\alpha)$. Suppose $\alpha$ is not a \svw{diving-board.}
\begin{enumerate}
    \item If $\beta = (1^{\ell-1}, n-\ell,1)$, then $\SIT_{\mathfrak{S}^*}(\alpha;\beta) \neq \emptyset$.
    \item If $\beta = (1, n-\ell,1^{\ell-1})$, then $\SRIT_{\mathfrak{S}^*}(\alpha;\beta) \neq \emptyset$.
    \item If $\beta = (\ell, 1^{n-\ell-2},2)$, then $\SIT_{R\mathfrak{S}^*}(\alpha;\beta) \neq \emptyset$. 
    \item If $\beta = (2, 1^{n-\ell-2},\ell)$, then $\SRIT_{R\mathfrak{S}^*}(\alpha;\beta) \neq \emptyset$.
\end{enumerate}
\end{corollary}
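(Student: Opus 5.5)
Statement 1 is exactly Proposition \ref{prop:suff_cond_diving_board}, so the work lies in transporting it to the other three variants. I will follow the pattern of the proofs of Corollaries \ref{cor:nec_cond_dom} and \ref{cor:suff_conds_SIT_equal_len}, using two tools: the flip map with Lemma \ref{lem:flip_reverses_content}, and the identity $\SIT_{R\mathfrak{S}^*}(\alpha;\beta)=\SIT_{\mathfrak{S}^*}(\alpha;\beta^c)$. Both tools preserve the hypothesis on $\alpha$. Reversal preserves length, and it preserves being (or not being) a diving-board, since that property depends only on the multiset of parts $\geq 2$.

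For Statement 2, I apply Statement 1 to $\alpha^r$, which is also not a diving-board and has length $\ell$. This produces $T\in\SIT_{\mathfrak{S}^*}(\alpha^r;(1^{\ell-1},n-\ell,1))$. By Lemma \ref{lem:flip_reverses_content}, $\flip(T)\in\SRIT_{\mathfrak{S}^*}(\alpha;(1,n-\ell,1^{\ell-1}))$, as required.

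For Statement 3, I need the complement of $\beta=(\ell,1^{n-\ell-2},2)$. In dots and bars, $\beta$ is $\ell$ dots, then $n-\ell-2$ singleton blocks, then two dots. So $\Set(\beta)=\{\ell,\ell+1,\ldots,n-2\}$ and $\Set(\beta)^c=\{1,\ldots,\ell-1,n-1\}$. This gives $\beta^c=(1^{\ell-1},n-\ell,1)$. Hence $\SIT_{R\mathfrak{S}^*}(\alpha;\beta)=\SIT_{\mathfrak{S}^*}(\alpha;\beta^c)\neq\emptyset$ by Statement 1. The edge case $n-\ell-2<0$ needs a check: since $\alpha$ is not a diving-board, $n\geq \ell+2$, so $n-\ell-2\geq 0$ and $\beta$ is well defined.

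For Statement 4, I apply Statement 3 to $\alpha^r$. This gives $T\in\SIT_{R\mathfrak{S}^*}(\alpha^r;(\ell,1^{n-\ell-2},2))$, and flipping yields an element of $\SRIT_{R\mathfrak{S}^*}(\alpha;(2,1^{n-\ell-2},\ell))$. The only step needing care is the complement computation in Statement 3, which is routine. Everything else is bookkeeping with the flip map.
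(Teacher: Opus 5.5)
Your proposal is correct and follows the paper's route. The paper proves this corollary exactly as it proves Corollary \ref{cor:suff_conds_SIT_equal_len}: Statement 1 is Proposition \ref{prop:suff_cond_diving_board}, Statements 2 and 4 come from $\flip$ and Lemma \ref{lem:flip_reverses_content} applied to $\alpha^r$, and Statement 3 comes from $\SIT_{R\mathfrak{S}^*}(\alpha;\beta)=\SIT_{\mathfrak{S}^*}(\alpha;\beta^c)$. Your explicit computation $(\ell,1^{n-\ell-2},2)^c=(1^{\ell-1},n-\ell,1)$ and your check that $n\geq \ell+2$ supply details the paper leaves implicit.
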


\begin{proof}
    The proof is identical to the proof of Corollary \ref{cor:suff_conds_SIT_equal_len} using Proposition \ref{prop:suff_cond_diving_board} for Statement 1.
\end{proof}

\section{Classification of dual immaculate functions under automorphisms}\label{ch:dI_images}

Consider the expansion of a variant of a dual immaculate function in the dual immaculate basis, 
\[
f(\mathfrak{S}^*_{\alpha}) = \sum_{\beta \vDash n} d_{\beta}\svw{\mathfrak{S}^*_{\beta}}
\]
for $f \in \{ \rho, \psi, \omega \}$.

In this section, we \svw{give and prove our} main theorem which classifies the compositions $\alpha$ such that this expansion is `simplest', i.e. the expansion consists of exactly one term 
\[
f(\mathfrak{S}^*_{\alpha})=\mathfrak{S}^*_{\beta^f_{\alpha}},
\]
where $\beta^f_{\alpha}$ is the leading term \svw{given in} Section \ref{ch:leading_terms}.

\begin{theorem}\label{thm:main}
Let $\alpha \vDash n$ \svw{and} $f \in \{  \rho, \psi \}$. Then, $f(\mathfrak{S}^*_{\alpha})$ has exactly one term when expanded in the dual immaculate basis if and only if $\alpha$ is bottom-aligned hook. In this case, 
\begin{align*}
\rho(\mathfrak{S}^*_{\alpha}) &= \mathfrak{S}^*_{\alpha} ,\\
\psi(\mathfrak{S}^*_{\alpha}) &= \mathfrak{S}^*_{\alpha^t}.
\end{align*}
Furthermore, $\omega(\mathfrak{S}^*_{\alpha})$ has exactly one term when expressed in the dual immaculate basis if and only if $\alpha$ is a bottom- or top-aligned hook. In these cases, \[
\omega(\mathfrak{S}^*_{\alpha})=\mathfrak{S}^*_{\alpha^t}.
\]
\end{theorem}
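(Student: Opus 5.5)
The plan is to first record a general reduction from Lemma \ref{lem:leading_terms_of_F_M_dI}. If $f(\mathfrak{S}^*_\alpha)$ has exactly one term in the $\mathfrak{S}^*$-expansion, that term must be the leading one, with coefficient $1$ by Propositions \ref{prop:rho_leading_term}, \ref{prop:psi_leading_term} and \ref{prop:ome_leading_term}. Hence $f(\mathfrak{S}^*_\alpha)=\mathfrak{S}^*_{\beta^f_\alpha}$, and the question becomes whether the $F$-expansions of the two sides agree.

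\textbf{Sufficiency.} Let $\alpha$ be a bottom-aligned hook. By Proposition \ref{prop:dI_symmetric}, $\mathfrak{S}^*_\alpha=s_\alpha\in\Sym$. Since $\rho$ restricts to the identity on $\Sym$, we get $\rho(\mathfrak{S}^*_\alpha)=\mathfrak{S}^*_\alpha$. Since $\psi$ and $\omega$ restrict to the classical $\omega$, we get $\psi(s_\alpha)=\omega(s_\alpha)=s_{\alpha^t}$. By Proposition \ref{prop:transpose_preserves_hooks}, $\alpha^t$ is again a bottom-aligned hook, so $s_{\alpha^t}=\mathfrak{S}^*_{\alpha^t}$. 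Now let $\alpha$ be a top-aligned hook. Proposition \ref{prop:ex_1_descent} gives $\mathfrak{S}^*_\alpha=F_\alpha$, so $\omega(\mathfrak{S}^*_\alpha)=F_{\alpha^t}$. Since $\alpha^t$ is top-aligned by Proposition \ref{prop:transpose_preserves_hooks}, Proposition \ref{prop:ex_1_descent} again gives $F_{\alpha^t}=\mathfrak{S}^*_{\alpha^t}$.

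\textbf{Necessity via symmetry.} The key trick applies whenever $\beta^f_\alpha$ is a bottom-aligned hook. In that case $f(\mathfrak{S}^*_\alpha)=\mathfrak{S}^*_{\beta^f_\alpha}=s_{\beta^f_\alpha}$ is symmetric. Applying the involution $f$, which preserves $\Sym$, shows $\mathfrak{S}^*_\alpha\in\Sym$. Proposition \ref{prop:dI_symmetric} then forces $\alpha$ to be a bottom-aligned hook.
\begin{itemize}
\item For $f=\rho$ this always applies, since $\beta^\rho_\alpha=(n-\ell+1,1^{\ell-1})$ is a bottom-aligned hook. This settles $\rho$ completely.
\item For $f=\psi$, the composition $\beta^\psi_\alpha$ consists of the column lengths $(\ell,c_2,\dots)$. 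Its parts $c_j$ with $j\ge 2$ are all at most $1$ exactly when $\alpha$ is a diving-board, in which case $\beta^\psi_\alpha=(\ell,1^{k-1})$ is a bottom-aligned hook. So diving-boards are handled this way.
\item For $f=\omega$ and $\alpha$ a diving-board, Lemma \ref{lemma:explicit_beta_if_alpha_divboard} gives $\beta^\omega_\alpha=(\ell-i+1,1^{k-2},i)$. When $\alpha$ is not a hook, this is not a diving-board and has length $k=n+1-\ell$.
\end{itemize}

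\textbf{Remaining cases: exhibiting a witness term.} In each remaining case I will exhibit an $F_\gamma$ that occurs in $f(\mathfrak{S}^*_\alpha)$ but not in $\mathfrak{S}^*_{\beta^f_\alpha}$. These cases are: $\psi$ with $\alpha$ not a diving-board; $\omega$ with $\alpha$ not a diving-board; and $\omega$ with $\alpha$ a non-hook diving-board.
\begin{itemize}
\item Presence of $F_\gamma$ in $f(\mathfrak{S}^*_\alpha)=R\mathfrak{S}^*_\alpha$ or $R\BdI^*_{\alpha^r}$ will come from Corollary \ref{cor:suff_conds_SIT_diving_brd}, items 3 and 4 (with $\alpha^r$ in place of $\alpha$ for $\omega$), or from Corollary \ref{cor:suff_conds_SIT_equal_len}.
\item Absence of $F_\gamma$ in $\mathfrak{S}^*_{\beta^f_\alpha}$ will come from the necessary conditions: $\gamma\le_d\beta^f_\alpha$ (Proposition \ref{prop:nec_cond_L_dom}) and $\ell(\beta^f_\alpha)\ge\max\gamma^c$ (Proposition \ref{prop:nec_cond_L_from_RG_des}).
\item For $\omega$ with $\alpha$ a non-hook diving-board, I would also use the length identity $\ell(\alpha)+\ell(\beta^\omega_\alpha)=n+1$ from Lemma \ref{lemma:characterize_alpha_divboard_using_beta}. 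For instance, take $\gamma=\beta^\omega_\alpha$ itself and compare it against the explicit $F$-expansion of $\omega(\mathfrak{S}^*_\alpha)$, which is obtained by applying $t$ to the formula of Proposition \ref{prop:diving_board_explicit_comp}. Alternatively, compare the total number of $F$-terms, $\binom{n-i}{k-1}$ versus $|\SIT(\beta^\omega_\alpha)|$.
\end{itemize}

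\textbf{Main obstacle.} I expect the hardest step to be the non-diving-board case. The natural witness $\gamma=(\ell,1^{n-\ell-2},2)$ satisfies $\max\gamma^c=n-\ell$, which need not strictly exceed $\max\alpha=\ell(\beta^\psi_\alpha)$. So the length condition alone may fail, and I will first try the dominance condition. If that also fails, I will build a witness adapted to the two rows of length $\ge 2$, in the spirit of the $T,T'$ construction in Proposition \ref{prop:ex_1_descent}, and rule it out in $\mathfrak{S}^*_{\beta^f_\alpha}$ via Lemma \ref{lemma:row_indices_runs}.
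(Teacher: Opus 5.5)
Your reduction to $f(\mathfrak{S}^*_\alpha)=\mathfrak{S}^*_{\beta^f_\alpha}$, the sufficiency direction and the symmetry trick are all correct. The trick is a genuine simplification: it settles $\rho$ in one line, where the paper needs a term count for diving-boards and the witness $(1,n-\ell,1^{\ell-1})$ otherwise. It also settles $\psi$ for every diving-board.

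\textbf{The gap.} The gap is in all the remaining cases. For $\psi$ and $\omega$ with $\alpha$ not a diving-board, you never produce a witness. The witnesses coming from Corollary~\ref{cor:suff_conds_SIT_diving_brd} can fail outright. For $\alpha=(2,2)$ they are $(\ell,1^{n-\ell-2},2)=(2,2)=\beta^\psi_\alpha$ and $(2,1^{n-\ell-2},\ell)=(2,2)=\beta^\omega_\alpha$. This is the common leading term of both sides, so no necessary condition can exclude it.

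In the non-hook diving-board case for $\omega$, the suggestion $\gamma=\beta^\omega_\alpha$ fails for the same reason. By Proposition~\ref{prop:ome_leading_term} and Lemma~\ref{lem:leading_terms_of_F_M_dI}, $F_{\beta^\omega_\alpha}$ has coefficient $1$ on both sides. Your counting alternative does work, but it has to be carried out:
\begin{itemize}
\item The smallest unused entry must begin the lowest unfilled row, so $|\SIT(\beta)|=\prod_j\binom{\beta_j+\cdots+\beta_{\ell(\beta)}-1}{\beta_j-1}$.
\item Hence $|\SIT(\beta^\omega_\alpha)|=\binom{n-1}{\ell-i}$.
\item This strictly exceeds $|\SIT(\alpha)|=\binom{n-i}{k-1}=\binom{n-i}{\ell-i}$, because $i\ge 2$ and $\ell-i\ge 1$.
\item Since $\omega$ permutes the $F$'s, the coefficient sums of the two sides would have to agree.
\end{itemize}

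\textbf{The missing idea.} For non-diving-boards you need a witness whose complement has a part exceeding $n-\ell$. The paper gets its witnesses here mostly by reversing roles. For $\omega$ it uses the top-aligned hook of length $\ell(\beta^\omega_\alpha)$. This is excluded from $\omega(\mathfrak{S}^*_\alpha)$ by Corollary~\ref{cor:nec_cond_L_from_RG_des} and present in $\mathfrak{S}^*_{\beta^\omega_\alpha}$ by Proposition~\ref{prop:suff_cond_equal_len}. For $\psi$ the paper splits into partitions and non-partitions.

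Your own direction works uniformly once the long part gains one cell:
\begin{itemize}
\item Take $\delta=(1^{\ell-1},n-\ell+1)$. It has $\delta\le_d\alpha$ and $\ell(\delta)=\ell$, so $F_\delta$ occurs in $\mathfrak{S}^*_\alpha$ by Proposition~\ref{prop:suff_cond_equal_len}.
\item Hence $F_{\delta^c}=F_{(\ell,1^{n-\ell})}$ occurs in $\psi(\mathfrak{S}^*_\alpha)$, and $F_{\delta^t}=F_{(1^{n-\ell},\ell)}$ occurs in $\omega(\mathfrak{S}^*_\alpha)$.
\item Both witnesses $\gamma$ have $\max\gamma^c=n-\ell+1$.
\item For a non-diving-board, $\ell(\beta^\psi_\alpha)=\max\alpha\le n-\ell$, since two rows have length at least $2$. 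Also $\ell(\beta^\omega_\alpha)\le n-\ell$ by Lemma~\ref{lemma:characterize_alpha_divboard_using_beta}.
\item So Proposition~\ref{prop:nec_cond_L_from_RG_des} excludes both witnesses from $\mathfrak{S}^*_{\beta^f_\alpha}$.
\end{itemize}

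This also covers the paper's non-partition $\psi$ case. That case appeals to Proposition~\ref{prop:suff_cond_equal_len}, which needs $\ell(\gamma)=\ell(\beta^\psi_\alpha)$. But there $\ell(\gamma)=n+1-\ell$ and $\ell(\beta^\psi_\alpha)=\max\alpha$ agree only for diving-boards. For example, $\alpha=(1,2,2)$ gives $\gamma=(1,3,1)$ but $\beta^\psi_\alpha=(3,2)$. The diving-board cases are exactly the ones your symmetry argument already handles.
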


\begin{proof} \svw{We first prove the sufficient direction, and then the more intricate necessary direction that will require  case analyses.}

\

\svw{\emph{Proof of the sufficient direction.}} Suppose $\alpha$ is a bottom-aligned hook. By Proposition \ref{prop:dI_symmetric}, $\mathfrak{S}^*_{\alpha}$ is symmetric with $\mathfrak{S}^*_{\alpha}=s_{\alpha}$. As well, $\rho$ restricts to the identity morphism on $\Sym$, so 
\[
\rho(\mathfrak{S}^*_{\alpha}) = \rho(s_\alpha)=s_{\alpha}=\mathfrak{S}^*_{\alpha}.
\]
On the other hand, $\psi,\omega$ restrict to $\omega : \Sym \to \Sym$, with $\omega \ME{(s_{\alpha}) = s_{\alpha^t}}$ when $\alpha$ is a bottom-aligned hook by the comment just before Section \ref{ch:dI_bg}. By Proposition \ref{prop:transpose_preserves_hooks}, since $\alpha$ is a bottom-aligned hook, so is $\alpha^t$. Thus, for $f \in \{\psi, \omega \}$
\[
f(\mathfrak{S}^*_{\alpha}) = f(s_\alpha)=s_{\alpha^t}=\mathfrak{S}^*_{\alpha^t}.
\]
The result for bottom-aligned hooks follows. 

Suppose $\alpha$ is a top-aligned hook. By Proposition \ref{prop:ex_1_descent}, $\mathfrak{S}^*_{\alpha} = F_\alpha$ and so $\omega(\mathfrak{S}^*_{\alpha})=\omega(F_{\alpha})=F_{\alpha^t}$ by Definition \ref{def:qsym_involutions}. Since $\alpha^t$ is a top-aligned hook by \svw{Proposition} \ref{prop:transpose_preserves_hooks}, we see that $F_{\alpha^t}=\mathfrak{S}^*_{\alpha^t}$, again by Proposition \ref{prop:ex_1_descent}. Thus,
\[
\omega(\mathfrak{S}^*_{\alpha}) = \omega(F_\alpha)=F_{\alpha^t}=\mathfrak{S}^*_{\alpha^t}.
\]
The result for top-aligned hooks follows, \svw{and this concludes the proof of the sufficient direction.}

\

\svw{\emph{Proof of the necessary direction.}}
Assume $\alpha \vDash n$ has length $\ell$ and is not a bottom-aligned hook. 
We show $f(\mathfrak{S}^*_\alpha) \neq \mathfrak{S}^*_{\beta^f_\alpha}$ and hence $f(\mathfrak{S}^*_\alpha)$ is not exactly one term when expanded in the dual immaculate basis. \\

\svw{\textsc{\underline{Case $f= \rho$.}}} We begin with $f= \rho$ and consider two cases for $\alpha$. \\

Since $\beta^\rho_{\alpha}=(n-\ell+1,1^{\ell-1})$ is a bottom-aligned hook by Proposition \ref{prop:rho_leading_term}, by our earlier explicit computation in Proposition \ref{prop:diving_board_explicit_comp},
\[
\mathfrak{S}^*_{\beta^\rho_{\alpha}} = \sum_{S} \svw{F_{\Comp([n-1] \setminus (S-1))}}
\]
where the sum is over all $S \subset \{ 2,\dots,n \}$ with $|S| = n-\ell$. Note that the number of terms in the \svw{fundamental} expansion of $\mathfrak{S}^*_{\beta^\rho_{\alpha}}$ is ${n-1 \choose n-\ell}$. \\

\textit{Case 1.} Suppose $\alpha$ is a \svw{diving-board.}

Observe that since $\alpha$ is not a bottom-aligned hook, the \svw{long row} of $\alpha$ occurs at index $i>1$. The number of terms in the fundamental expansion of $\rho(\mathfrak{S}^*_{\alpha})$ is equal to the number of terms in the fundamental expansion of $\mathfrak{S}^*_{\alpha}$ since $\rho$ is an involution \svw{such that $\rho(F_\alpha)=F_{\alpha^r}$.} Again, using the explicit computation in Proposition \ref{prop:diving_board_explicit_comp}, the number of terms in the fundamental expansion of $\mathfrak{S}^*_{\alpha}$ is ${n-i \choose n-\ell} < {n-1 \choose n-\ell}$ since $\ell(\alpha) = \ell$ by assumption. Thus, $\rho(\mathfrak{S}^*_{\alpha})\neq \mathfrak{S}^*_{\beta^\rho_{\alpha}}$.\\ 

\textit{Case 2.} Suppose $\alpha$ is not a \svw{diving-board.} 

In this case, we identify a \svw{diving-board} $\gamma$ such that $[F_{\gamma}] \rho (\mathfrak{S}^*_{\alpha}) > 0$ while $[F_{\gamma}]\mathfrak{S}^*_{\beta^\rho_{\alpha}} = 0$. Let 
\[
\gamma=(1, n - \ell , 1^{\ell-1}).
\]
Since $\alpha$ is not a diving-board, $n - \ell \geq 2$ and $\ell \geq 2$. Thus, $\ell(\gamma)=\ell+1$.

By Propositions \ref{prop:dI_over_F} and \ref{prop:variants_dI_as_images}, $|\SRIT_{\mathfrak{S}^*}(\alpha^r;\gamma)|=[F_{\gamma}]\rho(\mathfrak{S}^*_\alpha)$. Thus, the sufficient condition in Corollary \ref{cor:suff_conds_SIT_diving_brd} implies $[F_{\gamma}]\rho(\mathfrak{S}^*_{\alpha})>0$.

Since $\beta^\rho_{\alpha}$ is a \svw{diving-board,} to show that $|\SIT_{\mathfrak{S}^*}(\beta^\rho_{\alpha};\gamma)| = [F_{\gamma}]\mathfrak{S}^*_{\beta^\rho_{\alpha}}=0$, we use the fundamental expansion of $\mathfrak{S}^*_{\beta^\rho_{\alpha}}$  in Proposition \ref{prop:diving_board_explicit_comp}. Every term in the fundamental expansion of $\mathfrak{S}^*_{\beta^\rho_{\alpha}}$ is indexed by a composition $\Comp([n-1] \setminus(S-1))$, where $|S|=n-\ell$. In particular, every term in the fundamental expansion of $\mathfrak{S}^*_{\beta^\rho_{\alpha}}$ is indexed by a composition of length $\ell$. However, $\ell(\gamma)=\ell+1$ and so $[F_{\gamma}]\mathfrak{S}^*_{\beta^\rho_{\alpha}}=0$. \svw{Thus, $\rho(\mathfrak{S}^*_\alpha) \neq \mathfrak{S}^*_{\beta^\rho_\alpha}$.}\\

\svw{\textsc{\underline{Case $f= \psi$.}}} We now look at $f=\psi$ and again consider two cases for $\alpha$.\\

\textit{Case 1.} Suppose $\alpha$ is not a partition. 

In this case, we identify a composition $\gamma$ such that $[F_{\gamma}] \psi (\mathfrak{S}^*_{\alpha}) = 0$ while $[F_{\gamma}]\mathfrak{S}^*_{\beta^\psi_\alpha} > 0$. Since $\alpha$ is not a partition, let $i$ be the minimal index such that $\alpha_{i} > \alpha_{i-1}$ and let
\[
\gamma^c = (\alpha_{1},\ldots,\alpha_{i-2},\alpha_{i-1}+1,\alpha_{i}-1,\alpha_{i+1},\ldots, \alpha_\ell).
\]
Note that \svw{$\gamma^c \not\leq_{d} \alpha$.}

By Propositions \ref{prop:dI_over_F} and \ref{prop:variants_dI_as_images}, $|\SIT_{R\mathfrak{S}^*}(\alpha;\gamma)|=[F_{\gamma}]\psi(\mathfrak{S}^*_\alpha)$. Thus, the necessary condition in Corollary \ref{cor:nec_cond_dom} implies $[F_{\gamma}]\psi(\mathfrak{S}^*_{\alpha})=0$. 

To show that $|\SIT_{\mathfrak{S}^*}(\beta^\psi_\alpha;\gamma)| = [F_{\gamma}]\mathfrak{S}^*_{\beta^\psi_\alpha} > 0$, we use the sufficient condition in Proposition \ref{prop:suff_cond_equal_len}. We need to show that $\gamma \leq_{d} \beta^\psi_\alpha$ and $\ell(\gamma) = \ell(\beta^\psi_\alpha)$.

To show $\gamma \leq_{d} \beta^\psi_\alpha$, we use Proposition \ref{prop:dom_complement} and instead prove the equivalent statement $\gamma^c \geq_{d} (\beta^\psi_\alpha)^c$. 
Let $\beta^\psi_\alpha= (c_{1},c_{2},\dots)$ and note that $c_{1} = \ell(\alpha)$ by Proposition \ref{prop:psi_leading_term}. If $\ell(\alpha)=1$ then $\alpha$ is a partition and so $c_{1} = \ell(\alpha) \geq 2$. As well, if $\ell(\beta^\psi_\alpha)=1$ then $\alpha=(1^n)$ is a partition by Proposition \ref{prop:psi_leading_term}. Thus, $\ell(\beta^\psi_\alpha)\geq 2$. Thus, we can write
\[
(\beta^\psi_\alpha)^c=(1^{c_{1}-1},a,\dots)
\]
for some $a \geq 1$. 

For $k<c_{1}$, $\sum_{i=1}^k ((\beta^\psi_\alpha)^c)_{i} = k$ while $\sum_{i=1}^k \gamma^c_{i} \geq \sum_{i=1}^k \alpha_{i} \geq k$ since each part $\alpha_{i} \geq 1$. 

For $k = c_1$, $\sum_{i=1}^k \gamma^c_{i} = n \geq \sum_{i=1}^k ((\beta^\psi_\alpha)^c)_{i}$ since $\beta^\psi_\alpha$ is a composition of $n$ and $c_{1} = \ell(\alpha) = \ell(\gamma^c)$.
Thus, $\gamma^c \geq_{d} (\beta^\psi_\alpha)^c$ as required. \\

Since $\gamma \leq_d \beta^\psi_\alpha$, it follows that $\ell(\beta^\psi_\alpha) \leq \ell(\gamma)$. Thus, it remains to show $\ell(\gamma) \leq \ell(\beta^\psi_\alpha)$.
By \svw{Proposition \ref{prop:psi_leading_term} describing $\beta^\psi_\alpha$,} the length of $\beta^\psi_\alpha$ is the number of columns in the diagram of $\alpha$, given by $\max \alpha$. Suppose $i = \arg \max \alpha$. 
By Lemma \ref{lem:len_complement}, $\ell(\gamma) = n-(\ell(\gamma^c)-1)=n-(\ell(\alpha)-1)$. Thus, we can think of $\ell(\gamma)$ as the number of cells in the diagram of $\alpha$, excluding the cells with indices $(1,1),(2,1),\dots, (i-1,1),(i+1,1), \ldots ,(\ell(\alpha),1)$. In particular, this subset of cells contains the $i$-th row of $\alpha$, and so $\ell(\gamma)\geq \max\alpha = \ell(\beta^\psi_\alpha)$. \svw{Thus, $\psi(\mathfrak{S}^*_\alpha) \neq \mathfrak{S}^*_{\beta^\psi_\alpha}$.}\\

\textit{Case 2.} Suppose $\alpha$ is a partition. 

In this case, we identify a composition $\gamma$ such that $[F_{\gamma}] \psi (\mathfrak{S}^*_{\alpha}) > 0$ while $[F_{\gamma}]\mathfrak{S}^*_{\beta^\psi_\alpha} = 0$. Let 
\[
\gamma^c = (1,\alpha_{1}-1+\alpha_{2},\alpha_{3},\alpha_{4},\ldots, \alpha_\ell).\]
Note that $\gamma^c \leq_{d} \alpha$ and $\ell(\gamma^c)=\ell(\alpha)$.

By Propositions \ref{prop:dI_over_F} and \ref{prop:variants_dI_as_images}, $|\SIT_{R\mathfrak{S}^*}(\alpha;\gamma)|=[F_{\gamma}]\psi(\mathfrak{S}^*_\alpha)$. Thus, the sufficient condition in Corollary \ref{cor:suff_conds_SIT_equal_len} and Lemma \ref{lem:len_complement} imply $[F_{\gamma}]\psi(\mathfrak{S}^*_{\alpha})>0$. 

To show $|\SIT_{\mathfrak{S}^*}(\beta^\psi_\alpha;\gamma)| = [F_{\gamma}]\mathfrak{S}^*_{\beta^\psi_\alpha} = 0$, we use the necessary condition in Proposition \ref{prop:nec_cond_L_from_RG_des}. We need to show $\ell(\beta^\psi_\alpha)< \max \gamma^c$. 
By Proposition \ref{prop:psi_leading_term}, the length of $\beta^\psi_\alpha$ is the number of columns in \svw{the} diagram \svw{of} $\alpha$, given by $\max\alpha$. Since $\alpha$ is a partition, $\ell(\beta^\psi_\alpha)=\alpha_{1}$. Since $\alpha$ \svw{is not} a bottom-aligned hook \svw{by assumption,} we have that $\alpha_{2} \geq 2$. Thus, 
\[
\max \gamma^c \geq \alpha_{1}-1+\alpha_{2} \geq \alpha_{1}-1+2 > \alpha_{1} = \ell(\beta^\psi_\alpha),
\]
and we are done. \svw{Thus, $\psi(\mathfrak{S}^*_\alpha) \neq \mathfrak{S}^*_{\beta^\psi_\alpha}$.}\\

\svw{\textsc{\underline{Case $f= \omega$.}}} Finally, we look at $f=\omega$. Assume $\alpha \vDash n$ and $\alpha$ is not a bottom- or \svw{top-aligned} hook. 
We consider two cases for $\alpha$ and in both of the following cases, we identify $\gamma$ such that $[F_{\gamma}] \omega (\mathfrak{S}^*_{\alpha}) = 0$ while $[F_{\gamma}]\mathfrak{S}^*_{\beta^\omega_\alpha} > 0$. \\

\textit{Case 1.} Suppose $\alpha$ is not a \svw{diving-board.}

Let $\gamma$ be the top-aligned hook of size $n$ with $\ell(\gamma)=\ell(\beta^\omega_\alpha)$, i.e. 
\[
\gamma = (1^{\ell(\beta^\omega_\alpha)-1}, n-\ell(\beta^\omega_\alpha)+1).
\]
By Lemma \ref{lemma:characterize_alpha_divboard_using_beta}, since $\alpha$ is not a diving-board, $\ell(\alpha)+\ell(\beta^\omega_\alpha)\leq n$. Thus, $\svw{\max \gamma =} \gamma_{\ell(\gamma)} = n - \ell(\beta^\omega_\alpha) + 1 \geq \ell(\alpha)+1 > \ell(\alpha) \svw{=\ell(\alpha ^r)}$. 

By Propositions \ref{prop:dI_over_F} and \ref{prop:variants_dI_as_images}, $|\SRIT_{R\mathfrak{S}^*}(\alpha^r;\gamma)|=[F_{\gamma}]\omega(\mathfrak{S}^*_\alpha)$. Thus, the necessary condition in Corollary \ref{cor:nec_cond_L_from_RG_des} implies $[F_{\gamma}]\omega(\mathfrak{S}^*_{\alpha})=0$. 

To show $|\SIT_{\mathfrak{S}^*}(\beta^\omega_\alpha; \gamma)| = [F_\gamma]\mathfrak{S}^*_{\beta^\omega_\alpha} > 0$, we use the sufficient condition in Proposition \ref{prop:suff_cond_equal_len}. We need to show $\ell(\gamma) = \ell(\beta^\omega_\alpha)$ and $\gamma \leq_d \beta^\omega_\alpha$. 
Since $\gamma$ is a top-aligned hook, $\gamma \leq_{d} \beta^\omega_\alpha$ and $\ell(\gamma) = \ell(\beta^\omega_\alpha)$ by definition of $\gamma$. \svw{Thus, $\omega(\mathfrak{S}^*_\alpha) \neq \mathfrak{S}^*_{\beta^\omega_\alpha}$.}
\\

\textit{Case 2.} Suppose $\alpha$ is a \svw{diving-board.}

Since $\alpha$ is not a top- or bottom-aligned hook, suppose the \svw{long row} of length $k$ occurs at index $1 < i < \ell(\alpha)$. Note $k \geq 2$, otherwise $\alpha = (1^n)$ is a bottom-aligned hook. As well, $\ell(\alpha)\geq 3$, otherwise $\alpha$ is a top- or \svw{bottom-aligned} hook. 
By Lemma \ref{lemma:characterize_alpha_divboard_using_beta}, $n+1 = \ell(\alpha)+\ell(\beta^\omega_\alpha)$. Using this and $\ell(\alpha) \geq 3$, we have that $n - \ell(\beta^\omega_\alpha) = \ell(\alpha) - 1 \geq 2$. Thus, let $\gamma$ be the \svw{diving-board} of size $n$ such that $\ell(\gamma)=\ell(\beta^\omega_\alpha)+1$ and $\gamma_{\ell(\beta^\omega_\alpha)} \geq 2$. In particular, 
\[
\gamma = (1^{\ell(\beta^\omega_\alpha)-1}, n-\ell(\beta^\omega_\alpha), 1).
\]

By Propositions \ref{prop:dI_over_F} and \ref{prop:variants_dI_as_images}, $|\SRIT_{R\mathfrak{S}^*}(\alpha^r;\gamma)|=[F_{\gamma}]\omega(\mathfrak{S}^*_\alpha)$. By the necessary condition in Corollary \ref{cor:nec_cond_dom} we need to show $\svw{\gamma^t \not\leq_d \alpha}$ to conclude that $[F_{\gamma}]\omega(\mathfrak{S}^*_{\alpha})=0$. Taking the complement and using Proposition \ref{prop:dom_complement} we can equivalently check that $\svw{\gamma^r \not\geq_d \alpha^c}$.

Since $\alpha = (1^{i-1},k,1^{\ell-i})$ with $i > 1$, using the dots and bars diagram of $\alpha$, we see that $\alpha^c_{1}=i>1$. Thus, since $\gamma^r_{1}=\gamma_{\ell(\gamma)}=1<\alpha^c_{1}$, we have that $\svw{\gamma^r \not\geq_d \alpha^c}$.

To show $|\SIT_{\mathfrak{S}^*}(\beta^\omega_\alpha; \gamma)| = [F_\gamma]\mathfrak{S}^*_{\beta^\omega_\alpha} > 0$, we use the sufficient condition in Proposition \ref{prop:suff_cond_diving_board}. It remains to check that $\beta^\omega_\alpha$ is not a diving-board. Since $\alpha$ is not a top- or bottom-aligned hook this follows by Lemma \ref{lemma:explicit_beta_if_alpha_divboard}, \svw{and we are done. Thus, $\omega(\mathfrak{S}^*_\alpha) \neq \mathfrak{S}^*_{\beta^\omega_\alpha}$.}
\end{proof}

\section*{Acknowledgements}
\svw{The authors would like to thank Spencer Daugherty, Jinting Liang and Mitchell Ryan for helpful conversations.}

\end{document}